\newtheorem{theorem}{Theorem}[section]
\newtheorem{proposition}[theorem]{Proposition}
\newtheorem{example}[theorem]{Example}
\newtheorem{lemma}[theorem]{Lemma}
\newtheorem{corollary}[theorem]{Corollary}
\newtheorem{problem}[theorem]{Problem}
\flushbottom\title{Convex Geometries yielded by Transit Functions}
\author{Manoj Changat$^1$\and Lekshmi Kamal K. Sheela$^1$\and  Iztok Peterin$^{2,3}$\and  Ameera Vaheeda Shanavas$^1$}
\date{\today \\$ $\\
	\small	$^1$ Department of Futures Studies, University of Kerala, Thiruvananthapuram - 695581, India. E-mails:  mchangat@keralauniversity.ac.in, lekshmisanthoshgr@gmail.com, ameerasv@gmail.com\\
	\small	$^2$ University of Maribor, FEECS, Koro\v{s}ka 46, 2000 Maribor, Slovenia.\\ 
	\small	$^3$ Institute of Mathematics, Physics and Machanics, Jadranska 19, 1000 Ljubljana, Slovenia\\E-mail: iztok.peterin@um.si
	}
\begin{document}

\maketitle	

\begin{abstract}
Let $V$ be a finite nonempty set. A transit function is a map $R:V\times V\rightarrow 2^V$ such that $R(u,u)=\{u\}$, $R(u,v)=R(v,u)$ and $u\in R(u,v)$ hold for every $u,v\in V$. A set $K\subseteq V$ is $R$-convex if $R(u,v)\subset K$ for every $u,v\in K$ and all $R$-convex subsets of $V$ form  a convexity $\mathcal{C}_R$. We consider Minkowski-Krein-Milman property that every $R$-convex set $K$ in a convexity $\mathcal{C}_R$ is the convex hull of the set of extreme points of $K$ from axiomatic point of view and present a characterization of it. Later we consider several well-known transit functions on graphs and present the use of the mentioned characterizations on them. 
\end{abstract}

\noindent \textbf{Key words}: Minkowski-Krein-Milman property, convexity, convex geometry, transit function\bigskip

\noindent \textbf{AMS subject classification (2020)}: 52A01, 05C38.

\section{Introduction and 
 Preliminaries}\label{introduction}
The notion of convexity has been extended from Euclidean spaces to several different mathematical structures, and in the last decades an abstract theory of convex structures has been developed. It is based on natural conditions or axioms imposed on a family of subsets $\mathcal{C}$ of a given set $V$. The two axioms are both $\emptyset$ and $V$ belong to $\mathcal{C}$ and $\mathcal{C}$ is closed under arbitrary intersections. In the infinite case, a nested sequence of members of $\mathcal{C}$ should also belong to $\mathcal{C}$. The elements in $\mathcal{C}$ are called the convex sets of $V$. Given a convexity $\mathcal{C}$ on $V$, the \emph{convex hull} of a subset $A$ of $V$ is the smallest convex set containing $A$, denoted as $\langle A \rangle_{\mathcal{C}}$. The theory is comprehensively surveyed in van de Vel's book \cite{VdV}, where the notion of interval convexity turns out to be one of its universal concepts. \\

As shown by Calder in \cite{calder}, the pertinent properties of a mapping $R: V \times V\rightarrow 2^V$ that yields a convexity on a set $V$ are the symmetry law and the extensive law. These two laws constitute all axioms for the formal interval convexity with $R$ as the interval operator \cite{VdV}. By also adding the idempotent law, we get the concept of a transit function as defined by Mulder \cite{muld-08}. His purpose was to generalize natural intervals and convex sets in several mathematical structures, including vector spaces, graphs, posets, lattices, hypergraphs, etc. \\

We formally define a \emph{transit function} on a finite non-empty set $V$ as a map $R: V\times V\rightarrow 2^{V}$ satisfying the following three axioms.
	
$(t1)$ $u \in R(u,v)$, for every $u,v \in V$. 

$(t2)$ $R(u,v) = R(v,u)$, for every $u,v \in V$.

$(t3)$ $R(u,u) = \{u\}$, for every $u \in V$.

The notation $x \in R(u,v)$ can be interpreted as $x$ is between $u$ and $v$, and so an axiom on $R$ is termed a betweenness axiom (also we sometimes use the term transit axiom). We refer to sets $R(u,v)$ as transit sets. 

Given a transit function $R$ on $V$  a set $S\subseteq V$ is said to be \textit{$R$-convex} if $R(u,v)\subseteq S$ for every $u,v\in S$. It can be easily seen that the family of all $R$ convex sets of $V$, known as $R$-convexity and denoted as $\mathcal{C}_R$, forms a convexity on $V$ and we say that the $R$-convexity is generated by the transit function $R$. \\

Furthermore, for a positive integer $n$, a convexity $\mathcal{C}$ on $V$ is of arity at most $n$, if
$\mathcal{C} = \{C\subseteq V: F \subseteq C, |F|\le n \Rightarrow \langle F \rangle_{\mathcal{C}} \subseteq C\}$. It follows that every convexity $\mathcal{C}$ is an $n$-ary convexity for some integer $n\ge 2$, see van De Vel \cite{VdV} and \cite{Chma}, for more discussion on $n$-ary convexities and the $n$-ary transit function. 
Natural convexities $\mathcal{C}$ are interval convexities generated by a transit function $R$; see \cite{VdV}, for a comprehensive axiomatic treatment of interval convexities. \\

Given a convex set $K$ in a convexity $\mathcal{C}$, a point $k\in K$ is called an \textit{extreme point}, if the set $K\setminus \{k\}$ is also convex.
An interesting property satisfied by the classical Euclidean convexity in $\mathbb{R}^d$ is the well-known Minkowski-Krein-Milman theorem, which states that every compact convex set is the convex hull of its extreme points. 
This classical theorem is generalized in the axiomatic convexity and framed as the axiom, namely, \textit{"Every convex set $K$ in a convexity $\mathcal{C}$ is the convex hull of the set of extreme points of $K$"}. The convexity $\mathcal{C}$ on $V$ that satisfies this condition is termed a \textit{convex geometry} on $V$. \\

The theory of convex geometry, due to Edelman and Jamison \cite{edel-jami}, is developed as an abstraction
of standard convexity in Euclidean space $\mathbb{R}^d$. There are several other equivalent
formulations of convex geometries following the pioneering work of Edelman and Jamison, for example, the surveys by Stern~ \cite{Stern}, Adaricheva and C\'{z}edli \cite{adcz}, C\'{z}edli \cite{cz}, and Adaricheva and Nation \cite{adna}. There are many settings where convex geometries naturally appear (under
various names): see, for instance, Goecke, Korte and Lo\'{v}asz \cite{goko}, Korte, Lo\'{v}asz, and Schrader \cite{kolo}, Doignon and Falmagne \cite{dofa}, and Falmagne and Doignon \cite{fado}. For nice historical overviews with reference to
additional settings, see Monjardet \cite{mo08,mo90,mo85}.\\

In \cite{edel-jami}, the \emph{anti-exchange} axiom is introduced as an equivalent condition for a convex geometry. That is, a convexity $\mathcal{C}$ on $V$ satisfies the anti-exchange axiom, if for any convex set $K \in \mathcal{C} $, and two elements $p,q \notin K$, $p\neq q$,
then $q \in \left\langle K \cup \{p\}\right\rangle$ implies that $p \notin  \left\langle K \cup \{q\}\right\rangle$. For convenience, we use the notation $\left\langle K \cup p\right\rangle$ instead of $\left\langle K \cup \{p\}\right\rangle$.\\

 Edelman and Jamison in \cite{edel-jami} have given several equivalent conditions for a convexity $\mathcal{C}$ on $V$ to be a convex geometry. We quote some of the equivalent conditions that we use in this paper as the following Theorem. 
 
 \begin{theorem}\cite{edel-jami}\label{thm:edeljami}
     Let $V$ be a set and $\mathcal{C}$ be a convexity on $V$. The following conditions are equivalent.
     \begin{enumerate}
         \item[(i)] $\mathcal{C}$ is a convex geometry.
         \item[(ii)] $\mathcal{C}$ satisfies the anti-exchange axiom.
         \item[(iii)] For every convex set $K$, there exists a point $p \in V\setminus K$ such that $K\cup \{p\}$ is convex.   
     \end{enumerate}
 \end{theorem}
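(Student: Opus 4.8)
The plan is to establish the three conditions as equivalent by proving the two equivalences (i) $\Leftrightarrow$ (ii) and (ii) $\Leftrightarrow$ (iii); since $V$ is finite, maximal chains of convex sets exist and all hulls stabilise, facts I will use freely. Two preliminary observations underpin everything. First, a point $k\in K$ is extreme in a convex set $K$ if and only if $k\notin\langle K\setminus\{k\}\rangle$: indeed $\langle K\setminus\{k\}\rangle\subseteq K$, and the only subsets of $K$ containing $K\setminus\{k\}$ are $K\setminus\{k\}$ and $K$, so $\langle K\setminus\{k\}\rangle\ne K$ exactly when $K\setminus\{k\}$ is convex. Second, for every $A\subseteq V$ the extreme points of $\langle A\rangle$ lie in $A$: if $x\in\langle A\rangle\setminus A$ then $A\subseteq\langle A\rangle\setminus\{x\}$, hence $x\in\langle A\rangle\subseteq\langle\langle A\rangle\setminus\{x\}\rangle$, so $x$ is not extreme. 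Consequently every generating set of a convex set contains all of its extreme points, and $\langle\mathrm{ex}(K)\rangle\subseteq K$ always holds, where $\mathrm{ex}(K)$ denotes the set of extreme points of $K$.

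For (i) $\Rightarrow$ (ii) I will argue by contradiction. If anti-exchange fails there are a convex $K$ and distinct $p,q\notin K$ with $q\in\langle K\cup p\rangle$ and $p\in\langle K\cup q\rangle$; mutual containment forces $\langle K\cup p\rangle=\langle K\cup q\rangle=:L$. The second observation gives $\mathrm{ex}(L)\subseteq(K\cup\{p\})\cap(K\cup\{q\})=K$, whence (i) yields $L=\langle\mathrm{ex}(L)\rangle\subseteq K$, contradicting $p\in L\setminus K$. For (ii) $\Rightarrow$ (iii), given a convex $K\ne V$ I will pick $p\in V\setminus K$ so that $\langle K\cup p\rangle$ is inclusion-minimal among all $\langle K\cup r\rangle$, $r\in V\setminus K$, and show $\langle K\cup p\rangle=K\cup\{p\}$: any extra point $q$ would satisfy $q\in\langle K\cup p\rangle$, and anti-exchange would give $p\notin\langle K\cup q\rangle$, so $\langle K\cup q\rangle\subsetneq\langle K\cup p\rangle$, contradicting minimality.

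For (iii) $\Rightarrow$ (ii) I will once more assume anti-exchange fails, fix a witnessing convex $K$ with distinct $p,q\notin K$ and $q\in\langle K\cup p\rangle$, $p\in\langle K\cup q\rangle$, and run an exhaustion argument. Applying (iii) repeatedly I enlarge $K$ one point at a time; the added point can never be $p$ or $q$, since adding $p$ would make $\langle K\cup p\rangle=K\cup\{p\}$, which cannot contain $q$ (and symmetrically for $q$), while the violating configuration persists for the enlarged set by monotonicity of the hull. Finiteness then forces the strictly increasing chain to reach $V$ without ever absorbing $p$, which is absurd as $p\in V$.

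The main work, and the only place where the anti-exchange axiom is genuinely indispensable, is (ii) $\Rightarrow$ (i); I expect this to be the crux. Fix a convex $K$ and a minimal generating set $G$, so $\langle G\rangle=K$ and $z\notin\langle G\setminus\{z\}\rangle$ for every $z\in G$; such a $G$ exists by finiteness and already satisfies $\mathrm{ex}(K)\subseteq G$ by the first observation. It remains to show $G\subseteq\mathrm{ex}(K)$, for then $G=\mathrm{ex}(K)$ and $K=\langle\mathrm{ex}(K)\rangle$. The plan is to take $z\in G$, suppose for contradiction $z\in\langle K\setminus\{z\}\rangle$, enumerate $K\setminus G=\{b_1,\dots,b_k\}$, and choose the least index $j$ with $z\in\langle(G\setminus\{z\})\cup\{b_1,\dots,b_j\}\rangle$. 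Setting $W=\langle(G\setminus\{z\})\cup\{b_1,\dots,b_{j-1}\}\rangle$ produces a convex $W$ with $z,b_j\notin W$, $z\ne b_j$, and $z\in\langle W\cup b_j\rangle$; anti-exchange then gives $b_j\notin\langle W\cup z\rangle$. But $W\cup\{z\}\supseteq G$, so $\langle W\cup z\rangle=K\ni b_j$, a contradiction. This single application of anti-exchange is exactly the obstruction that separates convex geometries from arbitrary convexities, which is why the argument cannot be carried out under the transit axioms alone.
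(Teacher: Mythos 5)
Your proposal cannot be matched against a proof in the paper, because the paper gives none: Theorem~\ref{thm:edeljami} is imported from Edelman and Jamison \cite{edel-jami} as a black box, so your argument has to be judged on its own merits. Judged so, it is correct and self-contained. The hub structure (i) $\Leftrightarrow$ (ii) and (ii) $\Leftrightarrow$ (iii), resting on your two preliminary observations (that $k\in K$ is extreme iff $k\notin\langle K\setminus\{k\}\rangle$, and that the extreme points of $\langle A\rangle$ lie in $A$), is a clean way to organize the equivalences; each implication checks out, including the crux (ii) $\Rightarrow$ (i) via a minimal generating set $G$ with a single application of anti-exchange, and the finite exhaustion argument for (iii) $\Rightarrow$ (ii). Two points deserve one extra line each. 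First, in (ii) $\Rightarrow$ (i) you assert $b_j\notin W$ without justification; it does hold, since $b_j\in W$ would give $z\in\langle(G\setminus\{z\})\cup\{b_1,\dots,b_j\}\rangle\subseteq\langle W\cup b_j\rangle=W$, contradicting the minimality of $j$, but this step should be written out. Second, condition (iii) as stated in the paper is literally vacuous-false for $K=V$ (there is no $p\in V\setminus V$); like the paper, you must read it with the standard convention that $K$ ranges over proper convex sets. You do this implicitly in (ii) $\Rightarrow$ (iii) (``given a convex $K\ne V$''), and your (iii) $\Rightarrow$ (ii) chain only ever invokes (iii) for convex sets avoiding $p$, hence proper ones, so the argument is sound under that reading. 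A trivial slip: the inclusion $\mathrm{ex}(K)\subseteq G$ follows from your second observation, not the first.
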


In his work \cite{muld-08}, Mulder introduced several natural betweenness axioms for a transit function $R$. We require some of these axioms in our results in this paper, which are as follows.

$(b1)$ If $x\in R(u, v)$ and $x\neq v$, then $v\not\in R(u,x)$, for every $u,v,x\in V$. 

$(b3)$ If $x\in R(u,v)$ and $y\in R(u,x)$, then $x\in R(y,v)$, for every $u,v,x,y\in V$.

$(m)$ If $x,y\in R(u, v)$, then $R(x,y)\subseteq R(u,v)$, for every $u,v,x,y\in V$.

It can be verified that, if $R$ satisfies $(b3)$, then it satisfies $(b1)$, but not conversely. The axiom $(m)$ is known as the \emph{monotone axiom} and if $(m)$ holds, then $R(u,v)$ is $R$-convex for every $u,v\in V$.

It may be noted that a transit function $R$ can also be equivalently represented as a ternary relation and that the transit axioms can be translated as axioms on the ternary relation. 

In 2009, Chv\'{a}tal \cite{chva} defined convex geometries on $V$ using the betweenness relation $B$ considered as ternary relations with more relaxed axioms and tried to characterize some special classes of convex geometries using this approach.  

The relation of betweenness of Chv\'{a}tal can be translated as a function $B$ from $V\times V \rightarrow 2^V$ that satisfies $u \notin B(u,v)$ and $B(u,v) = B(v,u)$, for all $u,v \in V$. 
 
Let $X\subseteq V$ that is not necessarily convex and $x\in X$. If $x\notin B(u,v)$ for every points $u,v\in X-\{x\}$, then $x$ is an \emph{extreme point} of $X$. The set of all extreme points of $X$ is denoted by $ex_B(X)$. Using the idea of the set of extreme points, the results of Chv\'{a}tal in \cite{chva} can be stated as follows.

\begin{theorem}\cite{chva}\label{chav}
 For every betweenness $B$ on a finite $V$ with $x\in B(u,v)$ and pairwise distinct $u,x,v\in V$, the following statements are  equivalent.
\begin{description}
\item[$(eB1)$] For any subset $X$ of $V$ and every $x_1, x_2,x_3\in X$ such that $x_2 \in B(x_1,x_3)$
there exists $\overline{x}_1, \overline{x}_3\in ex_B(X) $  such that $x_2 \in B(\overline{x}_1, \overline{x}_3)$.
\item[$(eB2)$] For every $x,y,u,v,w\in V$, if $x\in B(u,v)$ and $y\in B(x,w)$, then $y\in B(u,w)$ or $y\in B(v,w)$ or $y\in B(u,v)$.
\end{description}
\end{theorem}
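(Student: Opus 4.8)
The statement is an equivalence, so I would prove the two implications separately. The direction $(eB1)\Rightarrow(eB2)$ is the easier one and I would obtain it by a single well-chosen application of $(eB1)$, while the direction $(eB2)\Rightarrow(eB1)$ carries essentially all the difficulty and I would attack it by an induction in which a witness $x_2\in B(x_1,x_3)$ is repeatedly rewritten until both of its endpoints are extreme. Throughout I use the standing convention (implicit in the hypothesis ``pairwise distinct $u,x,v$'') that $B$ is only nontrivial on distinct arguments, so that $z\in B(a,b)$ forces $a\neq b$ and $z\notin\{a,b\}$.

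For $(eB1)\Rightarrow(eB2)$, assume $x\in B(u,v)$ and $y\in B(x,w)$ and put $X=\{u,v,w,x,y\}$. The key observation is that $x\notin ex_B(X)$, since $x\in B(u,v)$ with $u,v\in X\setminus\{x\}$, and likewise $y\notin ex_B(X)$, since $y\in B(x,w)$ with $x,w\in X\setminus\{y\}$; hence $ex_B(X)\subseteq\{u,v,w\}$. Applying $(eB1)$ to $X$ and the triple $x_1=x$, $x_2=y$, $x_3=w$ (legitimate because $y\in B(x,w)$) produces $\overline x_1,\overline x_3\in ex_B(X)$ with $y\in B(\overline x_1,\overline x_3)$. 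As $\overline x_1,\overline x_3$ are distinct elements of $\{u,v,w\}$, the pair $\{\overline x_1,\overline x_3\}$ equals $\{u,v\}$, $\{u,w\}$, or $\{v,w\}$, giving respectively $y\in B(u,v)$, $y\in B(u,w)$, or $y\in B(v,w)$, which is exactly the conclusion of $(eB2)$. The coincidences that may occur among $u,v,w,x,y$ (such as $w\in\{u,v\}$ or $y\in\{u,v\}$) only shrink $X$ and merge some of the three disjuncts, and each such case is settled by the very same argument on the smaller set; I would dispose of these at the end.

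For $(eB2)\Rightarrow(eB1)$, fix $X$ and $x_1,x_2,x_3\in X$ with $x_2\in B(x_1,x_3)$. The plan is to \emph{push non-extreme endpoints outward}. If $x_1\notin ex_B(X)$, then $x_1\in B(a,b)$ for some $a,b\in X\setminus\{x_1\}$, and applying $(eB2)$ with $x=x_1$, $(u,v)=(a,b)$, $y=x_2$, $w=x_3$ (the hypotheses $x_1\in B(a,b)$ and $x_2\in B(x_1,x_3)$ hold) yields $x_2\in B(a,x_3)$, or $x_2\in B(b,x_3)$, or $x_2\in B(a,b)$. In each branch $x_2$ again lies in a transit set with endpoints in $X$, and the offending endpoint $x_1$ has been replaced by points witnessing its non-extremality; a symmetric rewriting handles $x_3$. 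For the base case $|X|=3$ it is convenient first to record the consequence of $(eB2)$ playing the role of $(b1)$: if $x_2\in B(x_1,x_3)$ then $x_1\notin B(x_2,x_3)$, for otherwise $(eB2)$ with $x=x_2$, $(u,v)=(x_1,x_3)$, $y=x_1$, $w=x_3$ would force $x_1\in B(x_1,x_3)$, $x_1\in B(x_3,x_3)$, or $x_1\in B(x_1,x_3)$, all impossible. Thus in $X=\{x_1,x_2,x_3\}$ neither $x_1$ nor $x_3$ lies in the transit set of the other two, so both are extreme and one may take $\overline x_1=x_1$, $\overline x_3=x_3$.

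The hard part will be termination of this pushing process. The naive idea of assigning each point a potential that strictly increases when $x_1$ is replaced by an endpoint $a$ with $x_1\in B(a,\cdot)$ fails, because this relation can cycle: already for collinear points $b_0<p_0<p_1<b_1$ one has $p_0\in B(b_0,p_1)$ and $p_1\in B(p_0,b_1)$, so no monotone potential exists and no single numerical measure can decrease in all three branches of $(eB2)$. To circumvent this I would run the induction on $|X|$: when $x_1\notin ex_B(X)$, pass to $X'=X\setminus\{x_1\}$, observe that the rewritten witness has both endpoints in $X'$ and that $ex_B(X)\subseteq ex_B(X')$ (deleting a point can only create, not destroy, extreme points), and apply the induction hypothesis in $X'$ to get $\overline x_1,\overline x_3\in ex_B(X')$ with $x_2\in B(\overline x_1,\overline x_3)$. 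The remaining issue is \emph{reconciliation}: any $\overline x_i$ that is extreme in $X'$ but not in $X$ must satisfy $\overline x_i\in B(x_1,d)$ for some $d$, and a further application of $(eB2)$ must push it onto a genuinely extreme point of $X$ without reintroducing $x_1$ indefinitely. Guaranteeing this — most likely by showing that among the disjuncts offered by $(eB2)$ one can always select a branch that strictly enlarges the closed transit set $B(p,q)\cup\{p,q\}$, so that the reconciliation strictly increases a set that is bounded by $X$ — is the single step I would budget the most effort for. The degenerate coincidences among the points, here and in the first implication, are routine and would be treated separately.
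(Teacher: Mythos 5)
A preliminary remark: the paper itself contains no proof of this theorem --- it is quoted from Chv\'atal \cite{chva} --- so there is nothing internal to compare against, and I assess your argument on its own terms. Your direction $(eB1)\Rightarrow(eB2)$ is correct (with $X=\{u,v,w,x,y\}$ one has $ex_B(X)\subseteq\{u,v,w\}$, and the degenerate coincidences do collapse to the same argument on smaller sets), and so is the base case $|X|=3$ of the converse via the $(b1)$-type consequence of $(eB2)$. The genuine gap is precisely the ``reconciliation'' step that you leave open in the hard direction: your induction hands you $\overline x_1,\overline x_3\in ex_B(X')$ for $X'=X\setminus\{x_1\}$, the theorem needs extreme points of $X$, and the inclusion you do record, $ex_B(X)\subseteq ex_B(X')$, points the wrong way. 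Moreover the mechanism you propose for bridging this --- selecting a branch of $(eB2)$ that strictly enlarges the closed transit set $B(p,q)\cup\{p,q\}$ --- is unlikely to work: a betweenness carries no monotonicity at all, so $B(a,x_3)$ need not contain (or even intersect) $B(x_1,x_3)$, and nothing forces any branch to enlarge anything.

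Fortunately the gap closes inside your own framework, with no iteration and no termination argument: under your hypotheses the reconciliation is vacuous, i.e. $ex_B(X')\subseteq ex_B(X)$. Indeed, suppose $p\in ex_B(X')$ but $p\notin ex_B(X)$. Then $p\in B(c,d)$ with $c,d\in X\setminus\{p\}$, and since $p$ is extreme in $X'$, one of the endpoints must be the deleted point, say $c=x_1$, whence $d\in X'\setminus\{p\}$. Because $x_1\notin ex_B(X)$, you also have $x_1\in B(a,b)$ with $a,b\in X'$. Now a single application of $(eB2)$ to $x_1\in B(a,b)$ and $p\in B(x_1,d)$ gives $p\in B(a,d)$ or $p\in B(b,d)$ or $p\in B(a,b)$; whichever disjunct holds, $p$ lies between two points of $X'$ that are (by the standing distinctness convention) different from $p$, contradicting $p\in ex_B(X')$. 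So the extreme points of $X'$ produced by your induction hypothesis are already extreme in $X$, and your induction closes. With this one lemma inserted your proof is complete; note also that your first worry (the cycling of any single potential function) is already neutralized by inducting on $|X|$, exactly as you set it up.
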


The implication of Theorem~\ref{chav} is that the condition $(eB2)$ implies that the corresponding betweenness $B$ will give rise to a convex geometry. 

The work on convex geometries due to Chv\'{a}tal through betweenness prompts one to think about how far the idea of betweenness can capture abstract convex geometries. Since the transit function is an important tool for studying betweenness, as evidenced by the vast references in discrete structures, \cite{chklmu-01,mcjmhm-10,Changat:19a,Changat-20,Changat:21,Changat:18a,Changat-22,Chvatal-11,momu-02,mune-09,VdV}, the following problem becomes interesting. 

\noindent\textbf{Problem.} \textit{Is it possible to characterize convex geometries arising from interval convexities $\mathcal{C}_R$ using the betweenness axioms of the corresponding transit function $R$?}\medskip

In this paper, we partially solve
this problem and provide connections to the well-known instances of convex geometries which are interval convexities.\\

We observe that condition $(eB2)$ in Theorem~\ref{chav} is crucial for a transit function $R$ to generate convex geometries. Therefore we rename it to $(Ch)$ after the author.

$(Ch)$ For every $u,v,w,x,y\in V$, if $x\in R(u,v)$ and $y\in R(x,w)$, then $y\in R(u,w)$ or $y\in R(v,w)$ or $y\in R(u,v)$.

The axiom $(J0)$, discussed in \cite{Changat-22} and several other references, turns out to be a necessary axiom for a transit function $R$ to generate convexities that are convex geometries. The axiom $(J0)$ is the following.

$(J0)$ For every distinct $u,v,x,y\in V$, if $x\in R(u,y)$ and $y\in R(x, v)$, then $x\in R(u,v)$.   
	
In this paper, we consider only finite non-empty sets $V$.
We organize the paper as follows.  
In Section~\ref{convex-transit}, we present our main results for an arbitrary transit function to generate interval convexities that are convex geometries and provide characterization in special cases. In Section~\ref{graph-convex geometries}, we provide examples of convexities in graphs, hypergraphs, etc. generated by transit functions and analyze the status of the axioms presented in Section~\ref{convex-transit} on these transit functions. In Section~\ref{identified}, we study convex geometries from the point of view of set systems. We characterize transit functions that identify convex geometries.

\section{Convex geometry generated by axioms on an arbitrary transit function $R$}\label{convex-transit}

In this section, we present the main results using axioms for an arbitrary transit function $R$ such that the corresponding $R$-convexity $\mathcal{C}_R$ is a convex geometry. 
It can be observed that $(b1)$  forms a necessary axiom for a transit function $R$ to generate $\mathcal{C}_R$ a convex geometry.
We have the following immediate theorem.

\begin{theorem} \label{Cg1}
Let $R$ be a monotone transit function on a finite non-empty set $V$. If $\mathcal{C}_R$ is a convex geometry, then $R$ satisfies $(b1)$ and $(J0)$.
\end{theorem}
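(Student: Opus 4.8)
The plan is to route everything through the anti-exchange characterization of convex geometries, namely Theorem~\ref{thm:edeljami}(ii), using the key consequence of monotonicity that each transit set $R(u,v)$ is itself $R$-convex. Since $R(u,v)$ is convex and contains both $u$ and $v$ (by $(t1)$ and $(t2)$), while any convex set containing $u,v$ must contain $R(u,v)$ by definition of $R$-convexity, monotonicity yields the identity $\langle\{u,v\}\rangle_{\mathcal{C}_R}=R(u,v)$. This identity is exactly what lets me translate membership in transit sets into membership in convex hulls, where the anti-exchange axiom can be applied.

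For $(b1)$, I would argue by contradiction. Suppose there are $u,v,x$ with $x\in R(u,v)$, $x\neq v$, yet $v\in R(u,x)$. A short check using $(t3)$ shows $u,v,x$ must be pairwise distinct: if $u=v$ then $x\in R(u,u)=\{u\}$ forces $x=v$, and if $x=u$ then $v\in R(u,u)=\{u\}$ forces $v=u$, each contradicting the hypotheses. Now take $K=\{u\}$, which is convex by $(t3)$. Since $\langle\{u\}\cup x\rangle=R(u,x)\ni v$ and $\langle\{u\}\cup v\rangle=R(u,v)\ni x$, with $x\neq v$ and $x,v\notin K$, I obtain simultaneously $v\in\langle K\cup x\rangle$ and $x\in\langle K\cup v\rangle$, directly contradicting anti-exchange. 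Hence $(b1)$ holds.

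For $(J0)$, again argue by contradiction: suppose pairwise distinct $u,v,x,y$ satisfy $x\in R(u,y)$ and $y\in R(x,v)$ but $x\notin R(u,v)$. Set $K=R(u,v)$, which is convex by monotonicity, and note $x\notin K$ by assumption. Because $u\in K$, the hull $\langle K\cup y\rangle$ contains $\langle\{u,y\}\rangle=R(u,y)\ni x$; because $v\in K$, the hull $\langle K\cup x\rangle$ contains $\langle\{x,v\}\rangle=R(x,v)\ni y$. Thus, provided $y\notin K$, the points $x\neq y$ with $x,y\notin K$, $y\in\langle K\cup x\rangle$, and $x\in\langle K\cup y\rangle$ contradict anti-exchange.

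The one step that needs care — and the main obstacle — is ruling out $y\in K=R(u,v)$, since otherwise the anti-exchange application is not licensed. Here I expect monotonicity to be decisive: if $y\in R(u,v)$, then $u,y\in R(u,v)$, so the monotone axiom $(m)$ gives $R(u,y)\subseteq R(u,v)$, whence $x\in R(u,y)\subseteq R(u,v)$, contradicting $x\notin R(u,v)$. Therefore $y\notin K$, which closes the argument and establishes both $(b1)$ and $(J0)$.
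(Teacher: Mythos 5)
Your proposal is correct and follows essentially the same route as the paper: both arguments invoke the anti-exchange characterization (Theorem~\ref{thm:edeljami}(ii)), take $K=\{u\}$ to derive $(b1)$, and take $K=R(u,v)$ (convex by monotonicity) to derive $(J0)$, including the identical use of $(m)$ to rule out $y\in R(u,v)$. Your version is slightly more explicit about the distinctness checks and the hull identity $\langle\{u,v\}\rangle=R(u,v)$, but these are details the paper's proof leaves implicit rather than a different method.
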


\begin{proof}
Assume that $\mathcal{C}_R$ is a convex geometry generated by a monotone transit function $R$. For a convex set $K \in \mathcal{C_R}$ and two unequal points $p$ and $q$ in $V\setminus K$, if $ q \in \left\langle K \cup p\right\rangle$, then it follows that $p \notin  \left\langle K \cup q\right\rangle$. First, we prove that $R$ satisfies $(b1)$. If not, then $x\in R(u,v)$, $ x\neq v$ and $v\in R(u,x)$. For a convex set $K=\{u\}$ we have $x\in\left\langle K\cup v\right\rangle$ and $v\in\left\langle K\cup x\right\rangle$, a contradiction with the assumption that $\mathcal{C}_R$ is a convex geometry. 

Now suppose that $R$ does not satisfy $(J0)$. That is, $x\in R(u,y)$, $y \in R(x,v)$ and $x\notin R(u,v)$ for some different points $u,x,y,v$. If $y\in R(u,v)$, then $x\in R(u,y)\subseteq R(u,v)$ since $R$ is monotone, a contradiction. So, $y\notin R(u,v)$. Since $R$ is monotone, $R(u,v)$ is convex and let $K=R(u,v)$.  Then $x \in \left\langle K\cup y\right\rangle$ since $x\in R(u,y)$ and $y \in \left\langle K\cup x\right\rangle$ since $y\in R(x,v)$. That is, $x \in \left\langle K\cup y\right\rangle$ and $y \in \left\langle K\cup x\right\rangle$, which contradicts $(ii)$ of Theorem \ref{thm:edeljami} that $\mathcal{C}_R$ is a convex geometry. Therefore, we can conclude that $R$ satisfies $(J0)$. 
\end{proof} 
 
It is clear from the proof of Theorem~\ref{Cg1} that if a convexity generated by a (not necessary monotone) transit function is a convex geometry, then $R$ satisfies $(b1)$, but it does not need to satisfy $(J0)$.  As an illustration, consider Example~\ref{not m}, where $\mathcal{C}_R$ constitutes a convex geometry and $R$ satisfy $(b1)$ but not $(J0)$ nor $(m)$.

\begin{example}\label{not m}
Let $V=\{a,b,c,d,e\}$ and  $R: V\times V\rightarrow 2^V$ be defined symmetrically by $R(a,c)=\{ a,b,c\}$, $R(a,d)=\{ a,b,d\}$, $R(a,e)=\{ a,b,d,e\}$, $R(d,e)=\{d,c,e\}$, $R(b,d)=\{b, c, d\}$  and in all other cases $R(x,y)=\{x,y\}$. Here $\mathcal{C}_R$ is a convex geometry and $R$ satisfies $(b1)$. But $R$ does not satisfy $(J0)$ as $c \in R(d,b)$, $b \in R(c,a)$ and $c \notin R(d,a)$. Also, it is not monotone as $b,d \in R(a,e)$ but $R(b,d) \not\subseteq R(a,e) $.
\end{example}

Consider the following example in which $R$ is a monotone transit function that satisfies $(b1)$ and $(J0)$, but $\mathcal{C}_R$ is not a convex geometry.

\begin{example}\label{ncg}
Let $V=\{a,b,c,d,e,f\}$ and $R: V\times V\rightarrow 2^V$ be defined symmetrically by $R(a,f)=\{ a,c,f\}$, $R(c,e)=\{c,b,e\}$, $R(b,d)=\{b, a, d\}$  and in all other cases $R(x,y)=\{x,y\}$. Then $R$  is monotone and $R$ satisfies $(J0)$ and $(b1)$. But $\mathcal{C}_R$ is not a convex geometry by Theorem \ref{thm:edeljami}, as anti-exchange axiom does not hold, because $K= \{d,e,f\}$ is a convex set and $c\in \left\langle K\cup a\right\rangle$ and $a\in \left\langle K\cup c\right\rangle$ (since $b \in R(c,e)$ and $a \in R(b,d)$).
\end{example}

Example~\ref{ncg} demonstrates that monotonicity, $(J0)$ and $(b1)$ are not sufficient for a convexity generated by a transit function $R$ to become a convex geometry. The next theorem where we replace monotonicity by $(Ch)$ gives a sufficient condition for a convexity induced by a transit function to be a convex geometry. Before that we show the following technical lemma. 

\begin{lemma}\label{useful}
    Let $R$ be a transit function on a finite non-empty set $V$ that fulfills $(Ch)$. If $K$ is an $R$-convex set, $p,q\notin K$ and $p\in \left\langle K\cup q\right\rangle$, then $p\in R(q,x)$ for some $x\in K$. 
\end{lemma}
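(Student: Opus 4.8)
The plan is to produce, explicitly, an $R$-convex superset $L$ of $K\cup\{q\}$ whose only points outside $K$ lie on a transit set $R(q,x)$ with $x\in K$. Since $\langle K\cup q\rangle$ is by definition the smallest $R$-convex set containing $K\cup\{q\}$, the inclusion $\langle K\cup q\rangle\subseteq L$ will follow, and reading off membership of $p$ in $L$ gives the conclusion at once. Concretely I set
\[
L \;=\; K\;\cup\;\bigcup_{x\in K}R(q,x).
\]
Assuming $K\neq\emptyset$ (when $K=\emptyset$ one has $\langle K\cup q\rangle=\{q\}$ and the statement is degenerate), axiom $(t1)$ yields $q\in R(q,x)\subseteq L$ for each $x\in K$, so $K\cup\{q\}\subseteq L$. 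The entire content of the lemma is thus the claim that $L$ is $R$-convex, because then $\langle K\cup q\rangle\subseteq L$, and $p\in\langle K\cup q\rangle$ with $p\notin K$ forces $p\in\bigcup_{x\in K}R(q,x)$, i.e.\ $p\in R(q,x)$ for some $x\in K$.

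To verify $R$-convexity I would take $a,b\in L$ and show $R(a,b)\subseteq L$, splitting according to how many of $a,b$ lie in $K$. If $a,b\in K$ this is immediate from the convexity of $K$. The first genuine case is $a\in K$ and $b\in R(q,x_b)$ for some $x_b\in K$: given $y\in R(a,b)$, I apply $(Ch)$ with the substitution $u=q$, $v=x_b$, $x=b$, $w=a$ (valid since $b\in R(q,x_b)$ and $y\in R(b,a)$), and the three alternatives land $y$ in $R(q,a)$, in $R(x_b,a)$, or in $R(q,x_b)$. Each of these is contained in $L$: the outer two because one endpoint is $q$ and the other lies in $K$, the middle one because both endpoints lie in the convex set $K$. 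Hence $R(a,b)\subseteq L$.

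The main obstacle is the remaining case, where both $a\in R(q,x_a)$ and $b\in R(q,x_b)$ lie outside $K$, so that no endpoint of the segment is anchored in $K$. Here, for $y\in R(a,b)$ I would first apply $(Ch)$ to $a\in R(q,x_a)$ and $y\in R(a,b)$ (take $u=q$, $v=x_a$, $x=a$, $w=b$), giving $y\in R(q,b)$, or $y\in R(x_a,b)$, or $y\in R(q,x_a)$. The last is in $L$ outright; the middle one has exactly the shape of the case already settled (an endpoint $x_a\in K$ against a $q$-reachable endpoint $b$) and so lies in $L$; and the stubborn alternative $y\in R(q,b)$ is dissolved by a \emph{second} application of $(Ch)$, now to $b\in R(q,x_b)$ and $y\in R(b,q)$ (take $u=q$, $v=x_b$, $x=b$, $w=q$), whose outcomes $R(q,q)=\{q\}$, $R(x_b,q)$, and $R(q,x_b)$ are all inside $L$. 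This closes the last case and establishes that $L$ is convex, completing the proof.

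Thus the crux is purely the correct bookkeeping of the substitutions $(u,v,w,x)$ in $(Ch)$: the axiom is used to ``pull back'' an arbitrary point $y$ of a segment joining two $q$-reachable points onto segments that are either anchored at $q$ (hence in $\bigcup_{x\in K}R(q,x)$) or contained entirely in $K$. Everything else is formal, and no ingredient beyond $(Ch)$, the transit axioms $(t1)$--$(t3)$, and the convexity of $K$ is needed.
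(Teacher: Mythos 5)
Your proof is correct, and it takes a genuinely different route from the paper. The paper's proof starts by asserting that $p\in\left\langle K\cup q\right\rangle$ yields a chain $y_1=q,\dots,y_t=p$ of points outside $K$ with $y_{i+1}\in R(y_i,x_i)$ for some $x_i\in K$, then picks a minimal such chain and uses $(Ch)$ to shorten any chain with $t>2$, forcing $t=2$. You instead exhibit the explicit set $L=K\cup\bigcup_{x\in K}R(q,x)$, verify by two applications of $(Ch)$ that $L$ is $R$-convex, and invoke minimality of the hull to get $\left\langle K\cup q\right\rangle\subseteq L$. Your case analysis is sound: the substitutions in $(Ch)$ are legitimate (the axiom imposes no distinctness on $u,v,w,x,y$, so taking $w=q$ in the last step is fine), and each of the three alternatives always lands in a segment anchored at $q$ with the other endpoint in $K$, or in a segment with both endpoints in $K$. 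What your approach buys is self-containedness: the paper's opening chain representation of hull membership is not a general fact about transit functions (it can fail when $(Ch)$ fails, since hull points may be generated from pairs of points both outside $K$), so strictly speaking it requires an argument of exactly the kind you give --- your convexity proof for $L$ establishes that representation (indeed with chains of length two) as a byproduct. What the paper's route buys is brevity, and the minimal-counterexample mechanics there mirror your second, ``stubborn'' case. One shared caveat: the statement implicitly needs $K\neq\emptyset$ (or $p\neq q$), since for $K=\emptyset$ and $p=q$ the hypotheses hold but no $x\in K$ exists; you at least flag this degenerate case, which the paper does not.
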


\begin{proof}
Let $K$ be an $R$-convex set, $p,q\notin K$ and $p\in \left\langle K\cup q\right\rangle$. This means that there exists two sequences of points $x_1,\dots,x_{t-1}\in K$ and $y_1,\dots,y_t\in V-K$, such that $y_1=q$, $y_t=p$ and $y_{i+1}\in R(y_i,x_i)$ for every $i\in\{1,\dots, t-1\}$. We will show that $t=2$ and with this $p=y_2\in R(x_1,y_1)=R(x_1,q)$ for some $x_1\in K$. We may choose minimum $t$. If $t>2$, then $y_2\in R(y_1,x_1)$ and $y_3\in R(y_2,x_2)$ where $x_1,x_2\in K$ and $y_1,y_2,y_3\in V-K$. By $(Ch)$ we have $y_3\in R(x_1,x_2)$ or $y_3\in R(y_1,x_1)$ or $y_3\in R(y_1,x_2)$. If $y_3\in R(x_1,x_2)$, then we have a contradiction because $K$ is $R$-convex. If $y_3\in R(y_1,x_1)$, then we have two shorter sequences by removing $x_2$ and $y_2$ from original sequences, which is in contradiction with the choice of $t$. Finally, if  $y_3\in R(y_1,x_2)$, then we have two shorter sequences by removing $x_1$ and $y_2$ from original sequences, the same contradiction with the choice of $t$. Hence, $t=2$ and we are done. 
\end{proof}
   
\begin{theorem}\label{Cg2}
    Let $R$ be a transit function on a finite non-empty set $V$. If $R$ satisfies $(Ch)$, $(b1)$ and $(J0)$, then $\mathcal{C}_R$ is a convex geometry. 
\end{theorem}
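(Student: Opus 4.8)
The plan is to verify the anti-exchange axiom and then invoke the equivalence (i)$\Leftrightarrow$(ii) of Theorem~\ref{thm:edeljami}. So I would fix an $R$-convex set $K$ and two distinct points $p,q\notin K$ with $q\in\langle K\cup p\rangle$, and aim to show $p\notin\langle K\cup q\rangle$. The argument runs by contradiction: assume in addition that $p\in\langle K\cup q\rangle$ and derive an impossibility.

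The first step is to use Lemma~\ref{useful}, which is exactly where $(Ch)$ enters, to collapse the two convex-hull memberships into single transit-set memberships. From $q\in\langle K\cup p\rangle$ the lemma yields $q\in R(p,x)$ for some $x\in K$, and from $p\in\langle K\cup q\rangle$ it yields $p\in R(q,y)$ for some $y\in K$. Since $p,q\notin K$ while $x,y\in K$, we automatically have $p,q\notin\{x,y\}$, and by hypothesis $p\neq q$.

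I would then split according to whether $x\neq y$ or $x=y$. If $x\neq y$, the four points $p,q,x,y$ are pairwise distinct, so $(J0)$ applies to the chain $p\in R(y,q)$ (this is $p\in R(q,y)$ rewritten via $(t2)$) and $q\in R(p,x)$, giving $p\in R(y,x)$. Because $x,y\in K$ and $K$ is $R$-convex, $R(y,x)\subseteq K$, whence $p\in K$, contradicting $p\notin K$. In the remaining case $x=y$ the two memberships read $q\in R(p,x)$ and $p\in R(q,x)$; rewriting the first by symmetry as $q\in R(x,p)$ and noting $q\neq p$, axiom $(b1)$ forces $p\notin R(x,q)=R(q,x)$, which directly contradicts $p\in R(q,x)$. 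Either way we reach a contradiction, so $p\notin\langle K\cup q\rangle$, the anti-exchange axiom holds, and $\mathcal{C}_R$ is a convex geometry.

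The one genuinely delicate point to watch is the degenerate case $x=y$: there the distinctness hypothesis of $(J0)$ is unavailable, so the argument cannot proceed through $(J0)$ and must instead fall back on $(b1)$. Everything else is a routine substitution into the three axioms, with the real structural work—turning the a priori long generating sequences of the convex hull into a single transit set—already performed by Lemma~\ref{useful} under $(Ch)$.
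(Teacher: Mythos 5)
Your proposal is correct and follows essentially the same route as the paper's proof: reduce to the anti-exchange axiom via Theorem~\ref{thm:edeljami}, apply Lemma~\ref{useful} twice to get $q\in R(p,x)$ and $p\in R(q,y)$ with $x,y\in K$, then use $(b1)$ to dispose of the case $x=y$ and $(J0)$ to reach $p\in R(x,y)\subseteq K$ when $x\neq y$. The only cosmetic difference is that the paper uses $(b1)$ up front to conclude $x\neq y$ before invoking $(J0)$, whereas you phrase the same two steps as a case split.
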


\begin{proof}
Suppose that $R$ is a transit function that satisfies $(Ch)$, $(b1)$ and $(J0)$. To prove that $\mathcal{C}_R$ is a convex geometry, it suffices to prove that the anti-exchange axiom holds by Theorem \ref{thm:edeljami}. This means that $p \in  \left\langle K \cup q\right\rangle$ implies $q \notin \left\langle K \cup p\right\rangle$ for any convex set $K \in \mathcal{C}$ and two different elements $p,q \notin K$.

Let $p \in  \left\langle K \cup q\right\rangle$. By Lemma \ref{useful} we have $p\in R(q,x)$ for some $x\in K$. Assume conversely, that also $q\in \left\langle K\cup p\right\rangle$. By Lemma \ref{useful} again we have $q\in R(p,y)$ for some $y\in K$. Since $p\in R(q,x)$, $q\notin R(p,x)$ by $(b1)$ so $y\neq x$. Now $p\in R(x,q)$ and $q\in R(p,y)$ implies that $p,q\in R(x,y)$ by $(J0)$, contradicting the assumption that $K$ is a convex set. So $q\notin \left\langle K\cup p\right\rangle$ and we are done. 
\end{proof}

Examples~\ref{not $(J0)$}, \ref{not $(Ch)$}, and \ref{not $(b1)$} illustrate the independence of $(Ch)$, $(b1)$, and $(J0)$. Also, the $R$-convexity of these three examples is not a convex geometry. 

\begin{example}\label{not $(J0)$} $(Ch)$ and $(b1)$, but not $(J0)$.\\
Let $V=\{a,b,c,d\}$ and  $R: V\times V\rightarrow 2^V$ be defined symmetrically by $R(a,c)=\{ a,b,c\}$,  $R(b,d)=\{ b,c,d\}$ and in all other cases $R(x,y)=\{x,y\}$. Then $R$ satisfies $(Ch)$ and $(b1)$. Here $b\in R(a,c)$ and  $c\in R(b,d)$ but  $b\notin R(a,d)$ and so $R$ do not satisfy $(J0)$.
 \end{example}
 
\begin{example}\label{not $(Ch)$} $(J0)$ and $(b1)$, but not $(Ch)$.\\
Let $V=\{a,b,c,d,e\}$ and $R: V\times V\rightarrow 2^V$ be defined symmetrically by $R(a,e)=\{ a,c, d,e\}$,  $R(c,d)=\{ c,b,d\}$,  $R(a,b)=\{ a,e,b\}$ and in all other cases $R(x,y)=\{x,y\}$. Then $R$ satisfies $(J0)$ and $(b1)$. Here $c\in R(a,e)$ and  $b\in R(c,d)$ but  $b\notin R(a,d)$, $b\notin R(a,e)$ and $b\notin R(d,e)$  and so $R$ does not satisfies $(Ch)$.
 \end{example}

\begin{example}\label{not $(b1)$} $(J0)$ and $(Ch)$, but not $(b1)$.\\
Let $V=\{a,b,c\}$ and $R: V\times V\rightarrow 2^V$ be defined symmetrically by $R(a,c)=\{ a,b,c\}$,  $R(b,c)=\{ b,a,c\}$ and  $R(a,b)=\{ a,b\}$. Then $R$ satisfies $(J0)$ and $(Ch)$. But $b\in R(a,c)$ and  $a\in R(b,c)$  so $R$ does not satisfy $(b1)$.
\end{example}

\begin{lemma}\label{Cg implies m}
    Let $R$ be a transit function on a set $V$. If $R$ satisfies $(Ch)$, then $R$ is monotone.
\end{lemma}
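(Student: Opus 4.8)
The goal is to verify the monotone axiom $(m)$ from $(Ch)$: assuming $(Ch)$, whenever $x,y\in R(u,v)$ we must show $R(x,y)\subseteq R(u,v)$. The plan is to fix an arbitrary $z\in R(x,y)$ and prove $z\in R(u,v)$ using at most two applications of $(Ch)$, so that monotonicity follows by the arbitrariness of $z$. Note that finiteness of $V$ is not needed here, in contrast to Lemma~\ref{useful}.

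First I would apply $(Ch)$ to the two memberships $x\in R(u,v)$ and $z\in R(x,y)$, reading $(Ch)$ with its variable $w$ instantiated as our $y$ and its conclusion variable instantiated as our $z$. This yields the three alternatives $z\in R(u,y)$, $z\in R(v,y)$, or $z\in R(u,v)$. The last alternative is exactly the desired conclusion, so it only remains to dispose of the first two.

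For those two cases I would invoke the second hypothesis $y\in R(u,v)$ and apply $(Ch)$ once more, this time with $y$ playing the role of the intermediate point. In the case $z\in R(u,y)$, rewrite it as $z\in R(y,u)$ using $(t2)$ and apply $(Ch)$ to $y\in R(u,v)$ and $z\in R(y,u)$ (taking $w=u$); this returns $z\in R(u,u)$, $z\in R(v,u)$, or $z\in R(u,v)$. Since $R(u,u)=\{u\}\subseteq R(u,v)$ by $(t3)$ together with $(t1)$, and $R(v,u)=R(u,v)$ by $(t2)$, every branch forces $z\in R(u,v)$. The case $z\in R(v,y)$ is handled symmetrically, applying $(Ch)$ to $y\in R(u,v)$ and $z\in R(y,v)$ with $w=v$ and collapsing $R(v,v)=\{v\}$.

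The only point demanding care is the bookkeeping of the five-variable substitution into $(Ch)$ and the repeated appeals to $(t2)$ and $(t3)$ to reduce the degenerate transit sets $R(u,u)$ and $R(v,v)$ to subsets of $R(u,v)$; I do not anticipate any genuine obstacle, since two applications of $(Ch)$ close every branch.
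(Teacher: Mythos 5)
Your proposal is correct and follows essentially the same argument as the paper's proof: one application of $(Ch)$ to $x\in R(u,v)$ and $z\in R(x,y)$ with $w=y$, followed by a second application of $(Ch)$ using $y\in R(u,v)$ in each of the two remaining cases, collapsing $R(u,u)$ and $R(v,v)$ via $(t3)$, $(t1)$ and $(t2)$. The only difference is that you spell out the variable substitutions and symmetry steps that the paper leaves implicit.
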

\begin{proof}
    Let $R$ be a transit function that satisfies $(Ch)$. Assume $x,y \in R(u,v)$ and $z \in R(x,y)$. Now $x \in R(u,v)$ and $z \in R(x,y)$ implies that $z \in R(u,y)$ or $z \in R(v,y)$ or $z \in R(u,v)$. If $z \in R(u,v)$, then we are done. Otherwise $z \in R(u,y)$ or $z \in R(v,y)$. First consider $z \in R(u,y)$. Now, $(Ch)$ together with $y \in R(u,v)$  imply that $z \in R(u,u)=\{u\}\subseteq R(u,v)$ or $z \in R(u,v)$ and we are done. Let now $z \in R(v,y)$. Again, $(Ch)$ together with $y \in R(u,v)$  imply that $z \in R(v,v)=\{v\}\subseteq R(u,v)$ or $z \in R(u,v)$ and we are done. 
\end{proof}

 By Theorems~\ref{Cg1} and \ref{Cg2} and Lemma~\ref{Cg implies m}  we obtain the following result. 
 
\begin{theorem}\label{cg}
Let $R$ be a transit function on a finite nonempty set $V$ satisfying $(Ch)$. Then $\mathcal{C}_R$ is a convex geometry if and only if $R$ satisfies $(b1)$ and $(J0)$. 
\end{theorem}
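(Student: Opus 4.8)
The plan is to prove the two directions of the equivalence separately, using the standing hypothesis that $R$ satisfies $(Ch)$ to bridge the gap between the monotone setting of Theorem~\ref{Cg1} and the general setting of this statement. The whole argument amounts to assembling the three previously established results in the right order.

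For the sufficiency direction, I would assume that $R$ satisfies $(b1)$ and $(J0)$. Since $R$ is also assumed to satisfy $(Ch)$, all three hypotheses of Theorem~\ref{Cg2} are met, and hence $\mathcal{C}_R$ is a convex geometry. This direction requires no work beyond invoking Theorem~\ref{Cg2} verbatim.

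For the necessity direction, I would assume that $\mathcal{C}_R$ is a convex geometry and aim to deduce $(b1)$ and $(J0)$ from Theorem~\ref{Cg1}. The one obstacle is that Theorem~\ref{Cg1} requires $R$ to be monotone, whereas monotonicity is not listed among the hypotheses here. This is exactly where Lemma~\ref{Cg implies m} enters: because $R$ satisfies $(Ch)$, the lemma guarantees that $R$ is monotone at no extra cost. With monotonicity secured, Theorem~\ref{Cg1} applies directly to the convex geometry $\mathcal{C}_R$ and yields both $(b1)$ and $(J0)$, completing this direction.

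Combining the two directions finishes the proof. The only nontrivial point—and hence the step I would flag as the crux—is the recognition that $(Ch)$ automatically supplies monotonicity through Lemma~\ref{Cg implies m}, which is what allows the necessity half to be reduced to the already-established monotone case of Theorem~\ref{Cg1}.
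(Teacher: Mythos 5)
Your proposal is correct and matches the paper's own argument exactly: the paper derives Theorem~\ref{cg} by combining Theorems~\ref{Cg1} and~\ref{Cg2} with Lemma~\ref{Cg implies m}, using the lemma precisely as you do to supply the monotonicity hypothesis of Theorem~\ref{Cg1} from $(Ch)$ in the necessity direction. Nothing is missing.
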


The Peano axiom $(P)$ is adapted from the well-known geometric property in Euclidean space. It was considered by van De Vel in \cite{VdV}, as well as many others, see \cite{Changat-20} for a flavor. It is defined as follows. 

$(P)$ For every $x,y,u,v,w\in V$, if $x\in R(u,v)$ and $y\in R(x,w)$, then there exists $z\in R(u,w)$ such that $y\in R(z,v)$.

The next result shows that $(Ch)$ is stronger than $P$, while the converse  does not hold in every case as shown later in Example \ref{nice}.

\begin{lemma}
If a transit function $R$ satisfies $(Ch)$, then it satisfies $(P)$.
\end{lemma}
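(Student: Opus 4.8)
The plan is to apply $(Ch)$ directly, since the hypotheses of $(P)$ --- namely $x\in R(u,v)$ and $y\in R(x,w)$ --- coincide exactly with those of $(Ch)$. Thus $(Ch)$ immediately yields the trichotomy $y\in R(u,w)$, or $y\in R(v,w)$, or $y\in R(u,v)$. My task is then to produce, in each of the three branches, an explicit witness $z\in R(u,w)$ for which $y\in R(z,v)$, thereby establishing the conclusion of $(P)$.

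In the first branch, where $y\in R(u,w)$, I will set $z=y$; then $z\in R(u,w)$ holds by the branch hypothesis, and $y\in R(y,v)=R(z,v)$ follows from the extensive law $(t1)$. In the third branch, where $y\in R(u,v)$, I will set $z=u$; here $u\in R(u,w)$ by $(t1)$, and $y\in R(u,v)=R(z,v)$ is exactly the branch hypothesis.

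The branch I expect to require the most care is the middle one, $y\in R(v,w)$, because the naive choice $z=u$ that works elsewhere fails: there is no reason to have $y\in R(u,v)$ in this case. The correct witness is instead $z=w$: then $w\in R(u,w)$ by $(t1)$ and the symmetry law $(t2)$, while $y\in R(w,v)=R(v,w)=R(z,v)$ using $(t2)$ together with the branch hypothesis. With all three witnesses identified, the existential requirement of $(P)$ is met in every case, completing the argument. Note that the only facts invoked beyond $(Ch)$ are the transit axioms $(t1)$ and $(t2)$, so no additional structure on $R$ is needed.
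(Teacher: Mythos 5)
Your proof is correct and follows essentially the same route as the paper's: apply $(Ch)$ to obtain the trichotomy and then choose the witnesses $z=y$, $z=w$, and $z=u$ in the respective branches, exactly as the paper does. Your version merely spells out the verifications via $(t1)$ and $(t2)$ that the paper leaves implicit.
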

\begin{proof}
 Suppose that $R$ satisfies $(Ch)$. So, if $x\in R(u,v)$ and $y\in R(x,w)$, then $y\in R(u,w)$ or $y\in R(v,w)$ or $y\in R(u,v)$. If $y\in R(u,w)$, then assume $z=y$ and so $y\in R(z,v)$. If $y\in R(v,w)$, then set $z=w$ and we get $y\in R(z,v)$. If $y\in R(u,v)$, then for $z=u$ we have $y\in R(z,v)$. 
\end{proof}

In Theorem~\ref{Cg2}, we can substitute $(Ch)$ with the weaker $(P)$. For this, we need the following propositions proved in \cite{VdV}. A convexity is said to be \emph{join hull commutative (JHC)} if for any convex set $K\subset V$ and for any point $p$ in $V$, $\left\langle K\cup p\right\rangle = \cup \{\left\langle k\cup p\right\rangle: k\in K\}$. If $R$ is a transit function on $V$,  then we call the transit function $R^*:V\times V\rightarrow 2^{V}$ defined by $R^*(u, v) = \left\langle R(u,v) \right\rangle$, for $u, v \in  V$, the \emph{segment
transit function} associated with R.

\begin{proposition}\cite{VdV}\label{pe im jhc}
If $R$ is a transit function, then $R$-convexity is JHC if and only if $R^*$ satisfies $(P)$.
\end{proposition}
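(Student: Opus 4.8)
The plan is to first reduce the statement to a cleaner geometric assertion about the segment operator $R^*$, and then verify the two implications separately. I would begin by recording three easy facts. Since $R^*(u,v)=\langle R(u,v)\rangle$, a set is $R$-convex if and only if it is $R^*$-convex, so $\mathcal{C}_{R^*}=\mathcal{C}_R$ and both share the same hull operator $\langle\cdot\rangle$; moreover every transit set $R^*(u,v)$ is convex, which forces $R^*$ to be monotone (if $x,y\in R^*(u,v)$ then $R(x,y)\subseteq R^*(u,v)$, hence $R^*(x,y)\subseteq R^*(u,v)$). Finally $\langle k\cup p\rangle=R^*(k,p)$. Consequently the join-hull-commutativity identity $\langle K\cup p\rangle=\bigcup_{k\in K}\langle k\cup p\rangle$ becomes $\langle K\cup p\rangle=\bigcup_{k\in K}R^*(k,p)$. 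Writing $C_p(K)=\bigcup_{k\in K}R^*(k,p)$, one always has $K\cup\{p\}\subseteq C_p(K)\subseteq\langle K\cup p\rangle$, so \emph{JHC holds if and only if $C_p(K)$ is convex for every convex $K$ and every $p\in V$.} This reformulation is the engine of both directions.

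For the implication JHC $\Rightarrow (P)$, suppose $x\in R^*(u,v)$ and $y\in R^*(x,w)$. I would take the convex set $K=R^*(u,w)=\langle\{u,w\}\rangle$, so that $C_v(K)=\langle K\cup v\rangle=\langle\{u,v,w\}\rangle$ by JHC. A short computation shows $y\in\langle\{u,v,w\}\rangle$: indeed $x\in R^*(u,v)\subseteq\langle\{u,v,w\}\rangle$ and then $y\in R^*(x,w)\subseteq\langle\{u,v,w\}\rangle$ because $x,w$ lie in this convex set. Hence $y\in C_v(K)$, which by definition means $y\in R^*(z,v)$ for some $z\in R^*(u,w)$; this is exactly the conclusion of $(P)$.

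For the converse $(P)\Rightarrow$ JHC, I would fix a convex $K$ and a point $p$ and prove directly that $C_p(K)$ is closed under $R^*$. Take $a\in R^*(k_1,p)$, $b\in R^*(k_2,p)$ with $k_1,k_2\in K$, and let $c\in R^*(a,b)$; the goal is $c\in R^*(k,p)$ for some $k\in K$. Applying $(P)$ to $a\in R^*(p,k_1)$ and $c\in R^*(a,b)$ yields a point $z\in R^*(p,b)$ with $c\in R^*(z,k_1)$. Monotonicity together with $b\in R^*(p,k_2)$ gives $z\in R^*(p,b)\subseteq R^*(p,k_2)$. Now I would apply $(P)$ a second time, to $z\in R^*(k_2,p)$ and $c\in R^*(z,k_1)$, obtaining a point $z^\ast\in R^*(k_2,k_1)$ with $c\in R^*(z^\ast,p)$. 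Since $K$ is convex, $z^\ast\in R^*(k_1,k_2)\subseteq K$, whence $c\in R^*(z^\ast,p)\subseteq C_p(K)$, as required.

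The main obstacle is precisely the bookkeeping in this last direction: a naive repeated use of $(P)$ merely swaps the roles of the two generators $k_1,k_2$ and never terminates. The decisive trick is the choice of base and target points in the second application of $(P)$ so that the auxiliary point $z^\ast$ is forced onto the segment $R^*(k_1,k_2)$, which lies inside $K$ by convexity; it is monotonicity of $R^*$ that lets the first application's output $z$ be relocated onto $R^*(p,k_2)$, setting up the second application. The edge cases ($K=\emptyset$ and $p\in K$) are routine and do not affect the argument.
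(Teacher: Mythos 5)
Your proof is correct. Note that the paper itself gives no proof of this proposition --- it is quoted from van de Vel's book \cite{VdV} --- so there is no in-paper argument to compare against; what you have produced is a valid, self-contained proof of the cited result, and it follows what is essentially the classical route. Your three preliminary reductions are all sound: $\mathcal{C}_{R^*}=\mathcal{C}_R$ with the same hull operator, monotonicity of $R^*$ (each $R^*(u,v)$ is a hull, hence convex), and $\langle k\cup p\rangle=R^*(k,p)$; these correctly turn JHC into the statement that $C_p(K)=\bigcup_{k\in K}R^*(k,p)$ is convex for every convex $K$ and every $p$. The forward direction is a clean instantiation (taking $K=R^*(u,w)$, $p=v$, and using $\langle\langle A\rangle\cup B\rangle=\langle A\cup B\rangle$), and in the converse direction your two applications of $(P)$ are legitimate: the first (with $u=p$, $v=k_1$, $w=b$) produces $z\in R^*(p,b)$ with $c\in R^*(z,k_1)$, monotonicity relocates $z$ into $R^*(p,k_2)$, and the second application (with $u=k_2$, $v=p$, $w=k_1$) lands $z^*$ in $R^*(k_1,k_2)\subseteq K$, which is exactly the point where convexity of $K$ closes the argument. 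The only caveats are the ones you already flag as routine: $K$ must be nonempty for $p\in C_p(K)$, and $(P)$ must be read for $R^*$ (not $R$) throughout, which you do consistently.
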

\begin{proposition}\cite{VdV}\label{pe im mo}
If a transit function $R$ satisfies $(P)$, then $R$  is monotone.
\end{proposition}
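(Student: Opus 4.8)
The goal is to establish the monotone axiom $(m)$: whenever $x,y\in R(u,v)$ and $z\in R(x,y)$, I must force $z\in R(u,v)$. The obstacle is that $(P)$ only produces an \emph{existential} intermediate point, so a single application cannot be expected to deposit $z$ directly inside $R(u,v)$; it will instead merely relocate $z$ into a transit set with one endpoint shifted. The plan is therefore to first prove a \emph{semimonotone} special case of $(m)$, in which one endpoint of the inner interval already coincides with an endpoint of the outer interval, and then to bridge the general case to this special case with one further use of $(P)$.

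The key step is the semimonotone lemma: if $p\in R(u,v)$, then $R(p,v)\subseteq R(u,v)$ (and, by $(t2)$, symmetrically $R(u,p)\subseteq R(u,v)$). To prove it, I would take any $s\in R(p,v)$ and apply $(P)$ after rewriting $p\in R(u,v)$ as $p\in R(v,u)$ via $(t2)$; that is, I feed $(P)$ the two hypotheses $p\in R(v,u)$ and $s\in R(p,v)$. The conclusion then asserts the existence of a point of $R(v,v)$ lying suitably between things, but $R(v,v)=\{v\}$ by $(t3)$, so this point \emph{must be} $v$ itself. Substituting $v$ collapses the conclusion to $s\in R(v,u)=R(u,v)$. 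Recognizing that the existential point is trapped in the singleton $R(v,v)$ is the crux of the whole argument; everything else is bookkeeping.

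With the lemma in hand, the general case is quick. Given $x,y\in R(u,v)$ and $z\in R(x,y)$, I apply $(P)$ to the pair $x\in R(u,v)$, $z\in R(x,y)$ to obtain a point $f\in R(u,y)$ with $z\in R(f,v)$. Since $y\in R(u,v)$, the semimonotone lemma gives $R(u,y)\subseteq R(u,v)$, hence $f\in R(u,v)$. Applying the lemma once more, now to $f\in R(u,v)$, yields $R(f,v)\subseteq R(u,v)$, so $z\in R(u,v)$. This proves $R(x,y)\subseteq R(u,v)$, which is exactly $(m)$.

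I expect the only real difficulty to be the degeneration trick inside the lemma, namely arranging the two input pairs for $(P)$ so that the guaranteed intermediate point is forced into $R(v,v)=\{v\}$ and thereby pinned to $v$. Once that collapse is spotted, the main proof needs just a single application of $(P)$ followed by two invocations of the lemma, with no case analysis at all.
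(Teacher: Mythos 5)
Your proof is correct. The crux --- instantiating $(P)$ with $w$ equal to one endpoint so that the guaranteed intermediate point is trapped in $R(v,v)=\{v\}$ by $(t3)$, yielding the semimonotone inclusion $R(p,v)\subseteq R(u,v)$ for $p\in R(u,v)$ --- is sound, and the subsequent single application of $(P)$ plus two invocations of that inclusion does deliver $(m)$ in full. Note that the paper itself offers no proof of this proposition (it is quoted from van de Vel \cite{VdV}); your argument is a valid self-contained derivation and matches the standard one, so there is nothing to flag.
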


\begin{theorem}
If $R$ is a transit function satisfying $(P)$, $(J0)$ and $(b1)$, then $\mathcal{C}_R$ is a convex geometry.
\end{theorem}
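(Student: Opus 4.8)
The plan is to mirror the proof of Theorem~\ref{Cg2}, whose only appeal to $(Ch)$ was through Lemma~\ref{useful}; the task therefore reduces to recovering the one-step reachability conclusion of Lemma~\ref{useful}---namely that $p\in\langle K\cup q\rangle$ forces $p\in R(q,x)$ for some $x\in K$---from the weaker hypothesis $(P)$. The bridge will be the join hull commutative (JHC) property, which is available here through Propositions~\ref{pe im jhc} and~\ref{pe im mo}.

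First I would observe that $(P)$ forces $R$ to be monotone by Proposition~\ref{pe im mo}. As noted after the statement of $(m)$, monotonicity makes every transit set $R(u,v)$ be $R$-convex, so $\langle R(u,v)\rangle=R(u,v)$ and hence the segment transit function coincides with $R$, i.e. $R^*=R$. Consequently $R^*$ inherits $(P)$ from $R$, and Proposition~\ref{pe im jhc} then yields that $\mathcal{C}_R$ is JHC.

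Next I would extract the analogue of Lemma~\ref{useful} directly from JHC. Let $K$ be $R$-convex with $p,q\notin K$ and $p\in\langle K\cup q\rangle$. By JHC we have $\langle K\cup q\rangle=\bigcup\{\langle k\cup q\rangle:k\in K\}$, so $p\in\langle k\cup q\rangle$ for some $k\in K$. Since $R$ is monotone, $R(k,q)$ is convex, contains both $k$ and $q$ by $(t1)$ and $(t2)$, and is the smallest such convex set, whence $\langle k\cup q\rangle=R(k,q)$ and therefore $p\in R(q,k)$ with $k\in K$. This is precisely the conclusion of Lemma~\ref{useful}.

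Finally I would repeat the argument of Theorem~\ref{Cg2} essentially verbatim: writing $p\in R(q,x)$ with $x\in K$, and assuming for contradiction that also $q\in\langle K\cup p\rangle$, the same reachability statement supplies $q\in R(p,y)$ with $y\in K$; then $(b1)$ forces $x\neq y$ and $(J0)$ forces $p,q\in R(x,y)\subseteq K$, contradicting $p,q\notin K$, so the anti-exchange axiom holds and $\mathcal{C}_R$ is a convex geometry by Theorem~\ref{thm:edeljami}. The only genuinely new ingredient, and the main obstacle, is the passage from $(P)$ to this one-step reachability lemma; once the monotonicity-driven identity $R^*=R$ and the resulting JHC property are in hand the passage is immediate, and everything downstream is identical to the $(Ch)$ case.
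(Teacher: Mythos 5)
Your proposal is correct and takes essentially the same approach as the paper's own proof: monotonicity via Proposition~\ref{pe im mo}, the identity $R^*=R$, JHC via Proposition~\ref{pe im jhc}, and then a verbatim repetition of the $(b1)$/$(J0)$ anti-exchange argument from Theorem~\ref{Cg2}. The only difference is that you explicitly justify the passage from JHC plus monotonicity to the one-step conclusion $p\in R(q,x)$ for some $x\in K$ (via $\left\langle \{k,q\}\right\rangle = R(k,q)$), a step the paper asserts without detail.
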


\begin{proof}
 By Proposition~\ref{pe im mo}, $R$ is monotone since it satisfies $(P)$. Since $R$ is monotone, we have $R^*(u,v)=R(u,v)$ and $(P)$ holds for $R^*$. By Proposition~\ref{pe im jhc}, $\mathcal{C}_R$  is JHC. If  $\mathcal{C}_R$ is JHC and $R$ is monotone, then $p\in \left\langle K\cup q\right\rangle$ implies $p \in R(q,x)$ for an $R$-convex set $K$, $p,q\notin K$ and some $x\in K$. Now we may assume that $\mathcal{C}_R$ is not a convex geometry. That is, $p\in \left\langle K\cup q\right\rangle$ and $q\in \left\langle K\cup p\right\rangle$ for some $R$-convex set $K$. Since $\mathcal{C}_R$ is JHC, $p\in \left\langle K\cup q\right\rangle$ implies that $p \in R(q,x)$ for some $x \in K$. Furthermore, $q\in \left\langle K\cup p\right\rangle$ implies that $q \in R(p,y)$ for some $y \in K$. If $x=y$, then we have a contradiction to $(b1)$ because $p \in R(q,x)$ and $q \in R(p,x)$. Now $p \in R(q,x)$ and $q \in R(p,y)$ and $p,q,x,y$ are different, which implies that $p\in R(x,y)$ by $(J0)$, a contradiction to the assumption that $K$ is an $R$-convex set. So $\mathcal{C}_R$ is a convex geometry. 
\end{proof}

Now, we can replace $(Ch)$ with the weaker $(P)$ in Theorem~\ref{cg} to get more classes of interval convex geometries, which can be obtained from axioms on the corresponding transit functions.

\begin{theorem}\label{Peano-ch}
Let $R$ be a transit function on a finite nonempty set $V$ satisfying $(P)$. Then $\mathcal{C}_R$ is a convex geometry if and only if $R$ satisfies $(b1)$ and $(J0)$. 
\end{theorem}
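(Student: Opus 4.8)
The plan is to prove the equivalence by assembling results already established in this section, so that no new combinatorial work is required. Throughout, $R$ is a transit function satisfying the Peano axiom $(P)$, and this hypothesis is kept fixed for both implications.

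For the necessity direction I would assume that $\mathcal{C}_R$ is a convex geometry and deduce $(b1)$ and $(J0)$. The first step is to upgrade the Peano hypothesis to monotonicity: by Proposition~\ref{pe im mo}, the axiom $(P)$ forces $R$ to be monotone. Once $R$ is known to be monotone, I am in exactly the setting of Theorem~\ref{Cg1}, which states that a monotone transit function generating a convex geometry must satisfy both $(b1)$ and $(J0)$. Applying that theorem completes this direction.

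For the sufficiency direction I would assume that $R$ satisfies $(b1)$ and $(J0)$. Combined with the standing hypothesis $(P)$, these are precisely the three conditions in the hypothesis of the theorem immediately preceding Theorem~\ref{Peano-ch}, which asserts that a transit function satisfying $(P)$, $(J0)$ and $(b1)$ yields a convex geometry. Invoking that theorem gives that $\mathcal{C}_R$ is a convex geometry, finishing the proof.

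The only thing to watch, rather than a genuine obstacle, is the bookkeeping: one must ensure that $(P)$ is carried through both directions, since it is the running assumption of the statement, and that the chain Proposition~\ref{pe im mo} followed by Theorem~\ref{Cg1} is applied in the correct order (first Peano-implies-monotone, then monotone-plus-convex-geometry-implies-$(b1)$-and-$(J0)$). All the substantive content --- the Lemma~\ref{useful}-style reduction underlying the sufficiency theorem and the anti-exchange analysis behind Theorem~\ref{Cg1} --- has already been front-loaded into those earlier results, so this theorem is essentially a corollary that replaces the strong axiom $(Ch)$ of Theorem~\ref{cg} by the weaker $(P)$.
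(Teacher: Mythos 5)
Your proposal is correct and is precisely the argument the paper intends for this theorem (which it states without written proof, as a direct consequence of the surrounding results): sufficiency is the unnumbered theorem that $(P)$, $(J0)$ and $(b1)$ imply $\mathcal{C}_R$ is a convex geometry, and necessity follows by combining Proposition~\ref{pe im mo} ($(P)$ implies monotonicity) with Theorem~\ref{Cg1}, mirroring how Theorem~\ref{cg} was assembled from Theorem~\ref{Cg2} and Lemma~\ref{Cg implies m}. No gaps; the bookkeeping you flag is handled exactly as you describe.
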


We end this section by presenting an example of a transit function $R$ whose $R$-convexity is a convex geometry and $R$ satisfies $(P)$, $(J0)$ and $(b1)$, but not $(Ch)$. For clarity, the example is shown in Figure~\ref{fig:CG-Ch} below. 
   
\begin{example}\label{nice}
Let $V=\{u,x,v,y,w,s,t\}$ and  $R: V\times V\rightarrow 2^V$ be defined symmetrically by $R(u,v)=\{ u,x,v\}$, $R(x,w)=\{ x,y,w\}$, $R(u,w)=\{ u,s,w\}$, $R(v,w)=\{v,t,w\}$, $R(u,t)=\{u,y,t\}$, $R(v,s)=\{v,y,s\}$  and in all other cases $R(a,b)=\{a,b\}$. It is easy to check that $R$ satisfies $(P)$, $(b1)$ and $(J0)$. But $R$ does not satisfy $(Ch)$ as $x \in R(u,v)$, $y \in R(x,w)$ but $y \notin R(u,v)$, $y \notin R(u,w)$ and $y \notin R(v,w)$. Further, one can observe that every $R(a,b)$, $a,b\in V$, is convex. From this it is straightforward to verify $(iii)$ of Theorem \ref{thm:edeljami} and $\mathcal{C}_R$ is a convex geometry.  
\end{example}

\begin{figure}[ht]
    \begin{center}
     \includegraphics[height=8cm]{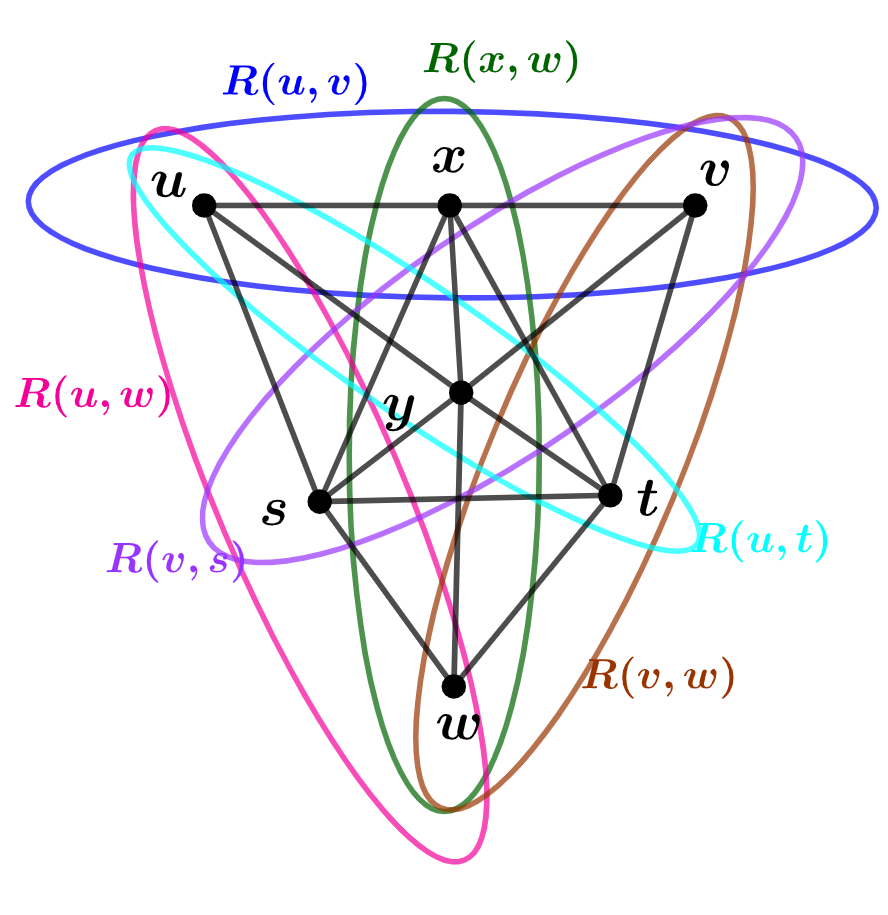} 
    \end{center}
   \caption{$R$ satisfies $(P),(J0)$ and $(b1)$, but not $(Ch)$.}\label{fig:CG-Ch}
\end{figure}

\section{Instances of convex geometries generated by well-known transit functions} \label{graph-convex geometries}

In this section, we analyze the known cases of interval convexities mostly in graphs known to be convex geometries, and check the status of the axioms that we have discussed in the previous sections. We provide each case in subsections. 

We consider only undirected, finite, simple and connected graphs. Let $G$ and $G_1,\ldots,G_k$ be connected graphs. We say that $G$ is $(G_1\cdots G_k)$-free graph if $G$ has no induced subgraph isomorphic to $G_i$ for every $i\in \{1,\dots,k\}$. The forbidden subgraphs that will be treated in this work are holes, denoted by $H$, which are all induced cycles $C_n$, for $n\geq 5$. Other graphs are house $H$ (left graph of Figure \ref{hhd}), domino $D$ (third graph of Figure \ref{hhd}), $A$-graph (last graph of Figure \ref{hhd}) and a $3$-fan $F_3$ that consists of a path on four vertices and additional vertex that is adjacent to all vertices of the mentioned path. Notice that we denote both house and a hole by $H$. In particular, always when they are both forbidden, we write $HH$-free graphs and there is no place for confusion. Otherwise, if there is only $H$-free graph, we mean a house-free graphs. For hole-free graphs we use the whole word: $hole$-free graphs.
   
\subsection{Geodesic convexity}\label{gc}

One of the well-studied path transit functions on graphs is the interval function $I_G$ with respect to the standard shortest path distance $d(u,v)$, $u,v\in V(G)$ in $G$. Here is $d(u,v)$ the minimum number of edges on a $u,v$-path and a $u,v$-path of length $d(u,v)$ is called a $u,v$-\textit{shortest path} or a $u,v$-\textit{geodesic}. The \emph{interval function} $I_G$ of a connected graph $G$ is defined  as $I: V\times V \longrightarrow 2^{V}$ where
	 	$$I_G(u,v)=\{w\in V(G): w \text{ lies on some }u,v\text{-geodesic in }G \},$$
which can also be expressed by the distances as
		$$I_G(u,v)=\{w\in V(G): d(u,w) +d(w,v) = d(u,v) \}.$$
  In \cite{fabe}, it was established that the geodesic convexity of a graph $G$ is a convex geometry if and only if $G$ is a Ptolemaic graph. \emph{Ptolemaic graphs} were introduced by Kay and Chartrand in \cite{kay} as graphs in which the distances obey the Ptolemy inequality. That is, for every four vertices $u, v, w$ and $x$ the inequality $d(u,v)d(w,x) + d(u,x)d(v,w) \geq d(u,w)d(v,x)$ holds.  Howorka in \cite{edwa} proved that a graph is Ptolemaic if and only if it is chordal and $3$-fan-free. A graph is chordal if it has no induced cycles $C_n$ for $n \geq 4$
\begin{theorem}\cite{fabe}\label{geodesic-convex_geometry}
The geodesic convexity of a graph G is a convex geometry if and only if $G$ is Ptolemaic.
\end{theorem}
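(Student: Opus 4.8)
The plan is to combine the paper's own machinery (Theorem~\ref{cg}) with Howorka's structural description of Ptolemaic graphs \cite{edwa}, namely that $G$ is Ptolemaic if and only if $G$ is chordal and $3$-fan-free. First I would record the one fact that holds for every connected graph: the interval function $I_G$ always satisfies $(b1)$, since $x\in I_G(u,v)$ with $x\neq v$ forces $d(u,x)<d(u,v)$, so $v\in I_G(u,x)$ would yield $d(u,v)\le d(u,x)<d(u,v)$, a contradiction. Thus the whole theorem reduces to deciding, in terms of $G$, when the remaining relevant axioms of Theorem~\ref{cg} hold for $I_G$.

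For the implication that a Ptolemaic graph yields a convex geometry I would show that if $G$ is chordal and $3$-fan-free, then $I_G$ satisfies both $(J0)$ and $(Ch)$, and then quote Theorem~\ref{cg} together with the already established $(b1)$ to conclude that $\mathcal{C}_{I_G}$ is a convex geometry. The engine for both axioms is the Ptolemy inequality $d(u,w)d(v,x)\le d(u,v)d(w,x)+d(u,x)d(v,w)$, which holds in $G$ by Kay and Chartrand's definition \cite{kay}. Concretely, for $(J0)$ one assumes $d(u,x)+d(x,y)=d(u,y)$ and $d(x,y)+d(y,v)=d(x,v)$ and wants $d(u,x)+d(x,v)=d(u,v)$; the computation reduces this to showing $y\in I_G(u,v)$, and the obstacle is precisely to rule out, using the Ptolemy inequality together with chordality, a strictly shorter $u,v$-path bypassing $y$. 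The axiom $(Ch)$ is handled by an analogous but longer case analysis; if preferred, one can instead verify the weaker Peano axiom $(P)$ for $I_G$ and appeal to Theorem~\ref{Peano-ch}, which should be technically smoother since $(P)$ only asserts the existence of some intermediate point $z$ rather than pinning $y$ into one of three prescribed intervals.

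For the converse I would argue by contraposition and produce an anti-exchange violation, i.e.\ negate part~(ii) of Theorem~\ref{thm:edeljami}. If $G$ is not Ptolemaic it is either not chordal, hence contains an induced cycle $C_n$ with $n\ge 4$, or it is chordal but contains an induced $3$-fan $F_3$. In the cyclic case, labelling the cycle $c_0c_1\cdots c_{n-1}$, I would take $K$ to be a suitable convex sub-path of the cycle and choose two diametrically placed vertices $p,q$ so that each lies in the hull of $K$ together with the other; the $C_4$ computation (with $K$ an edge of the $4$-cycle and $p,q$ the remaining two vertices, each of which drags the other into the hull) is the prototype I would generalise. In the $3$-fan case I would similarly exhibit a convex $K$ and two vertices each captured by geodesics passing through $K$ and through the other vertex.

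The step I expect to be the main obstacle is making this converse rigorous, because geodesic intervals are computed with distances in all of $G$, not inside the forbidden induced subgraph: an induced cycle or fan need not be isometric, so a chord-free configuration may still be short-circuited by paths through the rest of $G$. I would resolve this by passing to a minimal witness — a shortest induced cycle of length at least $4$, respectively a $3$-fan sitting on a geodesic $a,b,c,d$ — which forces the relevant pairwise distances inside the configuration to coincide with the distances in $G$, so that the local interval computations remain valid globally. On the Ptolemaic side the corresponding difficulty is purely the inequality bookkeeping needed to extract $(J0)$ and $(Ch)$ (or $(P)$) from the Ptolemy inequality, which I would isolate as a single distance lemma and then feed into Theorem~\ref{cg} (or Theorem~\ref{Peano-ch}).
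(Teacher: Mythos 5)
Your forward direction follows the same route as the paper: establish $(b1)$, $(J0)$ and $(Ch)$ for $I_G$ on Ptolemaic graphs and invoke Theorem~\ref{cg}. Your $(b1)$ argument is correct (it is Mulder's result \cite{muld-80}, which the paper simply cites), and your Ptolemy-inequality derivation of $(J0)$ does in fact close: applying $d(u,y)d(x,v)\le d(u,x)d(y,v)+d(x,y)d(u,v)$ to $d(u,x)+d(x,y)=d(u,y)$ and $d(x,y)+d(y,v)=d(x,v)$ and cancelling $d(x,y)>0$ yields $d(u,x)+d(x,y)+d(y,v)\le d(u,v)$, hence equality, so there is no residual "obstacle" to rule out. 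However, you leave unproven exactly the part the paper has to work hardest for: $(Ch)$. The paper's Proposition~\ref{ChodalCh} is a genuinely graph-structural argument built on Howorka's characterization \cite{edwa} (chordal and $3$-fan-free), chasing induced cycles and fans along geodesics; there is no evidence that $(Ch)$, or even $(P)$, falls out of "inequality bookkeeping" from the Ptolemy inequality, and asserting an "analogous but longer case analysis" is not a proof. Note also that the paper does not prove the $(J0)$ statement itself; it quotes the full characterization ($I_G$ satisfies $(J0)$ if and only if $G$ is Ptolemaic) from \cite{Changat-22}, and it is this iff form that its converse direction consumes.

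The converse is where your proposal genuinely fails. Your repair for the non-isometry problem, choosing "a shortest induced cycle of length at least $4$", does not force isometry: in the wheel obtained from $C_6$ by adding a hub adjacent to all rim vertices, the unique induced cycle of length at least $4$ is the rim $C_6$, yet antipodal rim vertices are at distance $2$ through the hub, not $3$; since that graph is non-chordal, your case split sends it to precisely this broken cycle argument. The fan case is worse: "a $3$-fan sitting on a geodesic $a,b,c,d$" cannot exist, because in any graph containing an induced $3$-fan the hub is adjacent to both $a$ and $d$, so $d(a,d)=2$ and the length-$3$ path is never a geodesic. Finally, even with an isometric witness you must exhibit a convex set $K$ with $p,q\notin K$, and $K$ is a hull computed in all of $G$; nothing in your sketch prevents that hull from swallowing $p$ or $q$. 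The paper never attempts this construction: its converse is obtained axiomatically, from the necessity of $(J0)$ for a convex geometry (Theorem~\ref{Cg1}) together with the characterization of $(J0)$ in \cite{Changat-22} (and ultimately the original proof in \cite{fabe}), with no explicit anti-exchange violation built inside a forbidden subgraph. To salvage your direct Farber--Jamison-style converse you would need both a correct witness-selection lemma and global control of hulls, each of which is a substantive argument, not a remark.
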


An alternative proof of Theorem \ref{geodesic-convex_geometry} follows from Theorem~\ref{cg} if we are able to show the desired relationships for geodesic convexity between $(J0),(b1),(Ch)$ and Ptolemaic graphs. This was proved for $(J0)$ in \cite{Changat-22} as follows. 

\begin{theorem}\cite{Changat-22}
Let $G$ be a graph. The interval function $I_G$ satisfies $(J0)$ if and only if $G$ is a Ptolemaic graph.
\end{theorem}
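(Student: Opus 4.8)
The plan is to prove the two implications separately, using the Ptolemy inequality for the ``Ptolemaic $\Rightarrow (J0)$'' direction and Howorka's characterization (Ptolemaic $=$ chordal and $3$-fan-free) for the converse.

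For the forward direction, suppose $x\in I_G(u,y)$ and $y\in I_G(x,v)$ for pairwise distinct $u,v,x,y$. Writing $a=d(u,x)$, $b=d(x,y)$, $c=d(y,v)$, the two hypotheses say precisely that $d(u,y)=a+b$ and $d(x,v)=b+c$. I would apply the Ptolemy inequality to the four vertices in the cyclic order $u,x,y,v$, with $uy$ and $xv$ as the diagonals:
\[
d(u,y)\,d(x,v)\le d(u,x)\,d(y,v)+d(x,y)\,d(u,v).
\]
Substituting the known quantities gives $(a+b)(b+c)\le ac+b\,d(u,v)$, which simplifies to $b(a+b+c)\le b\,d(u,v)$. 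Since $x\neq y$ we have $b=d(x,y)\ge 1$, so $d(u,v)\ge a+b+c$; together with the triangle inequality $d(u,v)\le d(u,x)+d(x,v)=a+b+c$ this forces $d(u,v)=d(u,x)+d(x,v)$, i.e.\ $x\in I_G(u,v)$. Hence $(J0)$ holds. The only delicate point is selecting the Ptolemy pairing so that the unwanted product $d(u,y)d(x,v)$ is exactly the left-hand side and cancels cleanly.

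For the converse I would argue the contrapositive through Howorka's theorem, so it suffices to show that $(J0)$ forces $G$ to be both chordal and $3$-fan-free. The $3$-fan is immediate: in an induced $3$-fan on the path $a\!-\!b\!-\!c\!-\!d$ with apex adjacent to all four, the apex yields $d(a,d)=d(b,d)=2$, so both $b\in I_G(a,c)$ and $c\in I_G(b,d)$ hold, while $b\notin I_G(a,d)$ (because $1+2\neq 2$); this is a direct violation of $(J0)$. For chordality I would assume an induced cycle $c_0c_1\cdots c_{m-1}$ with $m\ge 4$ and induct along it. Chordlessness gives $d(c_0,c_2)=2$ and $d(c_{j-1},c_{j+1})=2$ for every $j$, so applying $(J0)$ to $(u,x,y,v)=(c_0,c_{j-1},c_j,c_{j+1})$ (whose two premises are $c_{j-1}\in I_G(c_0,c_j)$ and $c_j\in I_G(c_{j-1},c_{j+1})$) yields $c_{j-1}\in I_G(c_0,c_{j+1})$, i.e.\ it propagates $d(c_0,c_j)=j$ one step further to $d(c_0,c_{j+1})=j+1$. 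Iterating from the base values $d(c_0,c_1)=1$, $d(c_0,c_2)=2$ up to $j=m-1$ gives $d(c_0,c_{m-1})=m-1$, contradicting the cycle edge $c_{m-1}c_0$, which forces $d(c_0,c_{m-1})=1$. Thus $G$ has no induced cycle of length $\ge 4$ and is chordal, so by Howorka $G$ is Ptolemaic.

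I expect the main obstacle to be ruling out the long induced cycles, since a priori the distances between cycle vertices could be shortened by paths through the rest of $G$ and need not equal the cycle distances; the natural temptation is to first prove that a shortest hole is isometric, which is technical. The inductive argument above avoids this entirely: instead of assuming isometry, it uses $(J0)$ as an engine that \emph{manufactures} the exact equality $d(c_0,c_j)=j$ step by step, so the only external inputs needed are the two distance-two equalities supplied by chordlessness. What remains is routine bookkeeping, namely checking that the four points fed to $(J0)$ stay pairwise distinct as $j$ runs over $2,\dots,m-1$ (which holds because $1\le j-1<j+1\le m-1$) and verifying the base case $d(c_0,c_2)=2$.
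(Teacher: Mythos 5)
Your proof is correct, and it necessarily takes a different route from the paper, because the paper contains no proof of this statement: it is quoted from \cite{Changat-22} and used as a black box (together with Mulder's $(b1)$ result and Proposition~\ref{ChodalCh}) to re-derive Theorem~\ref{geodesic-convex_geometry} via Theorem~\ref{cg}. So the real comparison is with the cited literature, where the result comes from forbidden-subgraph analysis; your forward direction is more structural than that. Setting $a=d(u,x)$, $b=d(x,y)$, $c=d(y,v)$, pairing the Ptolemy inequality so that $d(u,y)d(x,v)$ is the dominated product, and cancelling $b\ge 1$ gives $a+b+c\le d(u,v)$, which the triangle inequality upgrades to the equality $d(u,v)=d(u,x)+d(x,v)$, i.e.\ $x\in I_G(u,v)$. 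This is a genuinely short argument whose only input is the metric definition of Ptolemaic, with no structure theory needed. Your converse is the expected reduction through Howorka's theorem: the $3$-fan computation is right (all relevant distances equal $2$ and are realized inside the induced subgraph), and the cycle argument correctly sidesteps the isometric-hole difficulty by letting $(J0)$ itself manufacture the equalities $d(c_0,c_j)=j$ from the two chordlessness inputs $d(c_0,c_2)=2$ and $d(c_{j-1},c_{j+1})=2$.

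Two small repairs are needed in the cycle induction. First, the iteration range has an off-by-one: the last legal application of $(J0)$ is at $j=m-2$, not $j=m-1$ (your own distinctness condition $1\le j-1<j+1\le m-1$ already forces $j\le m-2$), and that last step is exactly the one producing $d(c_0,c_{m-1})=m-1\ge 3$, contradicting the edge $c_0c_{m-1}$; at $j=m-1$ the fourth vertex would be $c_m=c_0$ and the quadruple is no longer pairwise distinct. Second, the induction must carry two consecutive values, $d(c_0,c_{j-1})=j-1$ and $d(c_0,c_j)=j$, because the first premise $c_{j-1}\in I_G(c_0,c_j)$ is precisely the statement $d(c_0,c_{j-1})+1=d(c_0,c_j)$ and the conclusion delivers $d(c_0,c_{j+1})=d(c_0,c_{j-1})+2$; your base case supplies both values ($j=1,2$), so this is only bookkeeping, but it should be stated as the actual induction hypothesis.
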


Further, Mulder's work \cite{muld-80} has already confirmed that $I$ satisfies $(b1)$ on every connected graph $G$. So, we only need to demonstrate that $I$ satisfies $(Ch)$ on Ptolemaic graphs. 

\begin{proposition}\label{ChodalCh}
The interval function $I$ satisfies $(Ch)$ on Ptolemaic graphs.
\end{proposition}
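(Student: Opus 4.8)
The plan is to prove the statement by induction on the distance $d(x,y)$, after reformulating $(Ch)$ through the distance formula $z\in I(a,b)\iff d(a,z)+d(z,b)=d(a,b)$. First I would dispose of the degenerate cases: if $y=x$ then $y=x\in I(u,v)$ and we are done, while if $y=w$ then $y\in I(u,w)$ trivially. So from now on $x,y,w$ are pairwise distinct, with $d(x,y)\geq 1$ and $d(y,w)\geq 1$.

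Next I would isolate two easy ``aligned'' cases that need no Ptolemaic hypothesis. If $x\in I(u,w)$, then from $d(u,w)=d(u,x)+d(x,w)$ and $d(x,w)=d(x,y)+d(y,w)$ one gets $d(u,w)=d(u,x)+d(x,y)+d(y,w)\geq d(u,y)+d(y,w)\geq d(u,w)$, forcing equality throughout, so $d(u,y)+d(y,w)=d(u,w)$, i.e.\ $y\in I(u,w)$; symmetrically $x\in I(v,w)$ gives $y\in I(v,w)$. Hence the whole difficulty is concentrated in the \emph{critical case} where $x$ lies on no $u$--$w$ geodesic and on no $v$--$w$ geodesic, and there the goal becomes to prove $y\in I(u,v)$.

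The base case of the induction is $d(x,y)=1$ in this critical situation, and this is the only place Ptolemaicity is used. Writing $a=d(u,x)$ and $q=d(y,w)$, so that $d(x,w)=1+q$, the assumption $x\notin I(u,w)$ gives $d(u,w)\leq a+q$. Applying the Ptolemy inequality to $u,x,y,w$ in the form $d(u,x)\,d(y,w)+d(u,w)\,d(x,y)\geq d(u,y)\,d(x,w)$ and substituting yields $d(u,y)(1+q)\leq aq+(a+q)=a(q+1)+q$, whence $d(u,y)\leq a+\frac{q}{q+1}<a+1$ and therefore $d(u,y)\leq d(u,x)$ by integrality. The symmetric application to $v,x,y,w$ gives $d(v,y)\leq d(v,x)$; adding these and using $x\in I(u,v)$ forces $d(u,y)+d(v,y)\leq d(u,x)+d(x,v)=d(u,v)$, and since the reverse inequality always holds, $y\in I(u,v)$.

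For the inductive step, with $d(x,y)=k\geq 2$ and again in the critical case, I would let $y_1$ be the neighbour of $x$ on a fixed $x$--$y$ geodesic; a short distance check gives $y_1\in I(x,w)$, $y\in I(y_1,w)$ and $d(y_1,y)=k-1$. The base case applied to $(u,v,w,x,y_1)$ (legitimate, since we are in the critical case and $d(x,y_1)=1$) gives $y_1\in I(u,v)$, and then the induction hypothesis applied to $(u,v,w,y_1,y)$, where the middle point $y_1$ now lies in $I(u,v)$ and $d(y_1,y)=k-1$, yields $y\in I(u,w)\cup I(v,w)\cup I(u,v)$, closing the induction. The main obstacle is precisely the base case: arranging the case split so that the Ptolemaic hypothesis is invoked only there, and squeezing from the single scalar Ptolemy inequality the clean integral bound $d(u,y)\leq d(u,x)$ that collapses the additive betweenness identity. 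Everything else is elementary interval bookkeeping.
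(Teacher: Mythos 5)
Your proof is correct, but it takes a genuinely different route from the paper's. The paper argues structurally, through Howorka's characterization of Ptolemaic graphs as chordal, $3$-fan-free graphs: it supposes $(Ch)$ fails, picks a counterexample minimizing $d(x,w)$, and from a careful analysis of geodesics, neighbours and chords extracts an induced cycle of length at least four or an induced $3$-fan, contradicting that characterization. You instead work metrically from the Kay--Chartrand definition (the Ptolemy inequality on distances), which the paper states but never actually invokes in its proof. Your reduction checks out: the aligned cases are pure triangle-inequality bookkeeping; in the critical case with $d(x,y)=1$, the Ptolemy instance $d(u,x)d(y,w)+d(u,w)d(x,y)\geq d(u,y)d(x,w)$, combined with $d(u,w)\leq d(u,x)+d(y,w)$ (which follows from $x\notin I(u,w)$ and integrality) yields $d(u,y)<d(u,x)+1$, hence $d(u,y)\leq d(u,x)$, and symmetrically $d(v,y)\leq d(v,x)$, which forces $y\in I(u,v)$; and the induction on $d(x,y)$ closes legitimately, since the critical-case hypotheses concern only $x$ and therefore transfer to the configuration $(u,v,w,x,y_1)$, while $y\in I(y_1,w)$ and $d(y_1,y)=k-1$ follow from splitting geodesics. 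As for what each approach buys: yours is much shorter, uses neither chordality nor the forbidden-subgraph theorem, and isolates exactly where Ptolemaicity enters (one scalar inequality plus integrality); the paper's structural method, on the other hand, is the style that extends to the other convexities treated in the same section ($J$, $m^3$, toll walks), whose transit functions admit no distance formula, so your metric argument is genuinely specific to the interval function $I$.
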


\begin{proof}
Let $G$ be a Ptolemaic graph (chordal and 3-fan-free) and suppose that $I$ does not satisfy $(Ch)$. That is, $x\in I(u,v)$, $y\in I(x,w)$, $y\notin I(u,v)$, $y\notin I(u,w)$ and $y\notin I(v,w)$. We may choose vertices $x,y,u,v,w$ in such a way that $d(x,w)$ is minimal. Suppose $P$ is a $u,v$-geodesic containing $x$ and $Q=xx_1\dots x_pw$, $p\geq 1$, is an $x,w$-geodesic containing $y$. Since $y\notin I(u,v)$, $y$ is not on  $P$. Similarly, $y\notin I(u,w)$ and $y\notin I(v,w)$ imply the existence of a $u,w$-geodesic $R$ which does not contain $y$ and a $v,w$-geodesic $S$ that also avoids $y$. Let $a$ be the vertex common to $P$ and $R$ and $a'$ be the vertex common to $Q$ and $R$.  Clearly, $a$ is different from $x$ and $a'$ is different from $y$. Let $u'$ and $v'$ be neighbors of $x$ on $P$, where $u'$ is closer to $u$. If both $u'x_1,v'x_1\in E(G)$, then $x_1\neq y$ since we have a $u,v$-geodesic that contains $x_1$. Moreover, this yields a contradiction with the choice of $d(x,w)$, since we can replace $x$ by $x_1$. So, we may assume that at least one of $u'$ and $v'$ is not adjacent to $x_1$, say $u'x_1\notin E(G)$.  If $u'$ is adjacent to any other vertex $q$ on $Q$ (other than $x$), then $xu'q\xrightarrow{Q}x$ form an induced cycle (if $q$ is chosen to be closest to $x$ among all such vertices) of length at least four, a contradiction. Let $R'$ be a shortest path that starts in $u'$ and ends in a vertex $q'$ of $Q$ that avoids $x$ and let $u_1$ be a neighbor of $u'$ on $R'$. If $u_1$ belongs to $P$, then it is not adjacent to $x$ and $u_1u'xx_1$ form or are a part of an induced cycle of length at least four, a contradiction. Otherwise, to avoid an induced cycle of length at least four, we have $u_1x,u_1x_1\in E(G)$. Now, edge $u_1x_2$ yields a 3-fan on $u_1,u',x,x_1,x_2$, which is not possible for Ptolemaic graphs (notice that $x_2$ may be $a'$ or even $w$). Otherwise, $u_1x_2\notin E(G)$ and $u_1,x_1,x_2,x_3$ form or are a part of an induced cycle of length at least four, a final contradiction. Hence, $(Ch)$ holds for $I$ on Ptolemaic graphs.
\end{proof}

\subsection{Induced path or monophonic convexity}\label{mc}

An \textit{induced path} or a \textit{monophonic path} is a chordless path, where a chord of a path is an edge joining two non-consecutive vertices of that path. The \emph{induced path transit function} $J(u,v)$ of graph $G$ is a natural generalization of the interval function and is defined as follows:
$$J(u, v) =\{w \in V(G) :w\text{ lies on an induced }u,v\text{-path}\}$$
 The induced path convexity (monophonic convexity) is a convex geometry precisely for chordal graphs, see \cite{fabe}.
 
\begin{theorem}\cite{fabe}\label{moncon}
 The monophonic convexity of a graph $G$ is a convex geometry if and only if $G$ is a chordal graph.
\end{theorem}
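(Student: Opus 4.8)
The plan is to reproduce for the induced path function $J$ the scheme used for the interval function in Subsection~\ref{gc}, with chordal graphs playing the role of Ptolemaic graphs. For the sufficiency I will verify that on a chordal graph $J$ fulfils $(Ch)$, $(b1)$ and $(J0)$, and then quote Theorem~\ref{Cg2} (equivalently Theorem~\ref{cg}); should $(Ch)$ turn out to be too strong on some chordal graph, the weaker axiom $(P)$ together with Theorem~\ref{Peano-ch} is available as a fallback. For the necessity I will argue by contraposition and show directly that the presence of an induced cycle of length at least four forces a violation of the anti-exchange axiom, so that $\mathcal{C}_J$ fails to be a convex geometry by Theorem~\ref{thm:edeljami}.

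Assume first that $G$ is chordal. The axioms $(b1)$ and $(J0)$ should both follow from the absence of long induced cycles: if either fails, one splices the two witnessing induced paths and extracts a chordless cycle on at least four vertices, contradicting chordality. The heart of the argument is the monophonic analogue of Proposition~\ref{ChodalCh}, namely that $J$ satisfies $(Ch)$ on every chordal graph. I would prove it by contradiction, taking a configuration $x\in J(u,v)$, $y\in J(x,w)$ with $y\notin J(u,v)$, $y\notin J(u,w)$ and $y\notin J(v,w)$, and choosing it so that the length of a witnessing induced $x,w$-path $Q=xx_1\cdots x_pw$ is minimum. Examining the first edge $xx_1$ of $Q$ relative to an induced $u,v$-path through $x$ and to induced $u,w$- and $v,w$-paths avoiding $y$, every configuration that keeps $y$ out of all three target intervals should produce a chordless cycle of length at least four; here the analysis is expected to be lighter than in the Ptolemaic case, since induced paths may reroute freely and no $3$-fan hypothesis is needed. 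Once $(Ch)$, $(b1)$ and $(J0)$ are in hand, Theorem~\ref{Cg2} yields that $\mathcal{C}_J$ is a convex geometry.

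For the converse, suppose $G$ is not chordal, so that it contains an induced cycle $C=c_1c_2\cdots c_mc_1$ with $m\ge 4$. Two facts drive the construction. First, for any edge $ab$ of $G$ the only induced $a,b$-path is the edge itself, since any longer $a,b$-path has the chord $ab$; hence $J(c_1,c_2)=\{c_1,c_2\}$ and the set $K=\{c_1,c_2\}$ is $J$-convex. Second, because $C$ is chordless, each of its arcs is an induced path of $G$: the arc $c_2c_3\cdots c_m$ shows $c_3\in J(c_2,c_m)$ and the arc $c_1c_mc_{m-1}\cdots c_3$ shows $c_m\in J(c_1,c_3)$. Now take $p=c_3$ and $q=c_m$; these are distinct and lie outside $K$ for $m\ge 4$. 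Since $\langle K\cup q\rangle$ is convex and contains $c_2$ and $c_m=q$, it contains $J(c_2,c_m)\ni c_3=p$; symmetrically $\langle K\cup p\rangle$ contains $J(c_1,c_3)\ni c_m=q$. Thus $p\in\langle K\cup q\rangle$ and $q\in\langle K\cup p\rangle$ with $p\neq q$ and $p,q\notin K$, so the anti-exchange axiom fails and, by Theorem~\ref{thm:edeljami}, $\mathcal{C}_J$ is not a convex geometry.

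This completes the plan. The only genuinely technical point is the chordal case of $(Ch)$, which I expect to be the main obstacle; the necessity above is self-contained and robust, as it relies only on lower bounds for transit sets that are insensitive to the extra edges of $G$ outside the cycle.
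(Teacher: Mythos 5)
Your proposal is correct, and its two halves relate to the paper quite differently. The sufficiency half is essentially the paper's own route in Subsection~\ref{mc}: quote $(b1)$ on chordal graphs from Lemma~\ref{Jb1} (chordal graphs are $(HHD)$-free), quote the equivalence of $(J0)$ with chordality from \cite{Changat-22} (so there is no need to re-derive these two axioms, as you propose to do), prove that $J$ satisfies $(Ch)$ on chordal graphs, and invoke Theorem~\ref{Cg2} (or Theorem~\ref{cg}). Note, however, that the step you yourself identify as the crux --- $(Ch)$ on chordal graphs --- remains only a sketch in your write-up, and the scheme you propose for it (minimize the length of the witnessing induced $x,w$-path $Q$ and analyze its first edge, imported from the geodesic Proposition~\ref{ChodalCh}) is not how the paper argues: the paper's proof extracts, from $y\notin J(u,w)$ and $y\notin J(v,w)$, chords joining the $u,x$- and $v,x$-subpaths of $P$ to the $y,w$-subpath of $Q$, chooses such chords $aa'$ and $bb'$ with $a,b$ closest to $x$ and $a',b'$ closest to $y$, and in each of the cases $a'=b'$ and $a'\neq b'$ closes a chordless cycle of length at least four; no minimality of $Q$ and, as you predicted, no $3$-fan hypothesis is needed. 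The necessity half is where you genuinely depart from the paper, to your advantage: the paper inherits this direction from the citation \cite{fabe}, and its axiomatic machinery alone does not deliver it (Theorem~\ref{cg} presupposes $(Ch)$, which is established only on chordal graphs, while Theorem~\ref{Cg1} presupposes monotonicity, which $J$ lacks in general). Your direct construction is correct in every detail: for an induced cycle $c_1c_2\cdots c_mc_1$ with $m\ge 4$, the set $K=\{c_1,c_2\}$ is $J$-convex because any $c_1,c_2$-path with an internal vertex has the chord $c_1c_2$; chordlessness of the cycle makes its arcs induced paths of $G$ itself, giving $c_3\in J(c_2,c_m)\subseteq\left\langle K\cup c_m\right\rangle$ and $c_m\in J(c_1,c_3)\subseteq\left\langle K\cup c_3\right\rangle$; this violates anti-exchange, so Theorem~\ref{thm:edeljami} applies. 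This makes the necessity direction elementary and self-contained where the paper relies on a reference, and it is robust under adding edges of $G$ outside the cycle, exactly as you claim.
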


In \cite{Changat-22}, a characterization of the induced path function of chordal graphs is provided. Furthermore, in \cite{momu-02}, the graphs for which $J$ satisfies $(b1)$ are identified as $(HHD)$-free graphs where $H$-house, $H$-hole, and $D$-domino are depicted in Figure~\ref{hhd}. 

\begin{theorem}\cite{Changat-22}
Let $G$ be a graph. The induced path transit function $J_G$ satisfies the axiom $(J0)$ if and only if $G$ is a chordal graph.
\end{theorem}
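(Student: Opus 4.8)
The plan is to prove the two implications separately, one of which is short and one of which carries the real weight. Throughout I would lean on the elementary fact that if $a$ and $b$ are adjacent in $G$, then the only induced $a,b$-path is the edge $ab$ itself, so $J(a,b)=\{a,b\}$; indeed any longer $a,b$-path would have the edge $ab$ as a chord joining its two endpoints. This single observation does most of the work in the easy direction and is also used repeatedly in the hard one.

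For the direction ``$J_G$ satisfies $(J0)\Rightarrow G$ is chordal'' I would prove the contrapositive by an explicit construction. Suppose $G$ is not chordal and fix an induced cycle $C=c_0c_1\cdots c_{k-1}c_0$ with $k\ge 4$. Put $u=c_0$, $x=c_1$, $y=c_2$, $v=c_{k-1}$; these are four distinct vertices because $k\ge 4$. Since $C$ is induced, $c_0c_2$ is not an edge, so $c_0c_1c_2$ is an induced path and $x\in J(u,y)$; likewise the arc $c_1c_2\cdots c_{k-1}$ (which is $C$ minus the single vertex $c_0$) is an induced path through $c_2$, so $y\in J(x,v)$. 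Finally $u=c_0$ and $v=c_{k-1}$ are adjacent on $C$, whence $J(u,v)=\{c_0,c_{k-1}\}$ and $x=c_1\notin J(u,v)$, so $(J0)$ fails. I emphasize that this argument is insensitive to how $C$ is embedded in $G$: the only global claim, $x\notin J(u,v)$, is forced purely by the adjacency of the endpoints and so cannot be spoiled by induced paths running through vertices outside $C$.

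For the converse ``$G$ chordal $\Rightarrow (J0)$'' I would induct on $|V(G)|$ using a perfect elimination ordering, i.e. by deleting a simplicial vertex $s$. The key remark is that a simplicial vertex can never be an interior vertex of an induced path, for its two neighbours on the path would then be adjacent and form a chord. Consequently, if $x\in J(u,y)$ and $y\in J(x,v)$ with $u,v,x,y$ distinct, then $x$ and $y$ are interior vertices of induced paths and so are \emph{not} simplicial. Hence any simplicial vertex $s\notin\{u,v\}$ automatically satisfies $s\notin\{u,v,x,y\}$; for such an $s$ the witnessing induced paths avoid $s$, they remain induced in the chordal graph $G-s$, and the induction hypothesis in $G-s$ yields an induced $u,v$-path through $x$, which is still induced in $G$. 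The base case (a complete graph) is vacuous, since there the premise of $(J0)$ cannot be met. This settles every case in which a simplicial vertex exists outside $\{u,v\}$; in particular it covers the case $uv\in E(G)$, because two non-adjacent simplicial vertices (which a non-complete chordal graph always has, by Dirac's lemma) cannot both lie in $\{u,v\}$.

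The hard part will be the remaining boundary case, in which the \emph{only} simplicial vertices of $G$ are $u$ and $v$, necessarily non-adjacent, so that one is forced to delete a target endpoint, say $s=u$. After deleting $u$ the induction hypothesis in $G-u$ supplies an induced path from a neighbour of $u$ to $v$ passing through $x$, and the task becomes to re-attach $u$ while keeping the path induced and still through $x$. Since $N(u)$ is a clique, $u$ meets any induced path in at most two consecutive vertices, so one can prepend $u$ (shortcutting past its first neighbour if necessary) to recover an induced $u,v$-path; the delicate sub-case is when $x$ is itself the neighbour of $u$ and $u$ is also adjacent to the next vertex, where prepending creates a chord while shortcutting would skip $x$. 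Resolving it---by using chordality to produce an alternative induced $x,v$-path avoiding $N(u)\setminus\{x\}$, or else extracting an induced cycle of length at least four for a contradiction---is where the argument must work hardest. An alternative route for the whole converse is to assume $(J0)$ fails and build an induced cycle of length at least four directly from the two witnessing paths together with the separation $x\notin J(u,v)$; I expect this to be comparably technical, the subtlety being that naive chord-shortcutting of the obvious non-induced $u,v$-walk collapses only to triangles, so the hypothesis $x\notin J(u,v)$ must be invoked precisely to force a long chordless cycle rather than a mere chord.
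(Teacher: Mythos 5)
Your necessity direction is complete and correct: given an induced cycle $c_0c_1\cdots c_{k-1}c_0$ with $k\ge 4$, the choice $u=c_0$, $x=c_1$, $y=c_2$, $v=c_{k-1}$ violates $(J0)$, and your observation that adjacent endpoints force $J(u,v)=\{u,v\}$ makes the key claim $x\notin J(u,v)$ immune to whatever else happens in $G$. The sufficiency direction, however, is not a proof; it is a correct reduction plus an explicitly unresolved case. The inductive step via a simplicial vertex $s$ is sound exactly when some simplicial vertex lies outside $\{u,v\}$: your points that $x$ and $y$ are interior vertices of induced paths (hence non-simplicial), that the witnessing paths therefore avoid $s$, and that Dirac's lemma settles the case $uv\in E(G)$, are all right. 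But in the boundary case where $u$ and $v$ are the \emph{only} simplicial vertices of $G$, you describe the obstruction and candidate strategies (``re-attach $u$'', ``produce an alternative induced $x,v$-path avoiding $N(u)\setminus\{x\}$, or else extract an induced cycle'') without carrying any of them out. That case is where the whole content of the theorem sits, and it is not vacuous: already $G=P_4$ with $u,v$ the leaves forces deletion of an endpoint, and the truly delicate sub-case you flag --- $x\in N(u)$ with $u$ also adjacent to the successor $q_1$ of $x$ on the induced $x,v$-path $Q$, so that prepending $u$ creates the chord $uq_1$ while shortcutting past $x$ loses $x$ --- is realizable in a chordal graph whose only simplicial vertices are $u$ and $v$ (one can build a seven-vertex example), so it cannot be dismissed.

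Concretely, what is missing is a proof that in this configuration there still exists an induced $x,v$-path $S$ with $S\cap N(u)=\{x\}$, equivalently an induced $u,v$-path through $x$. Producing $S$ requires splicing the $x,y$-portion of $P$ with the $y,v$-portion of $Q$ and then invoking chordality to kill the chords between the two pieces and the chords into $N(u)$; this splicing-and-cleanup argument is precisely the technical core of the result, and your write-up labels it as ``where the argument must work hardest'' rather than supplying it. The same remark applies to your alternative route (deriving an induced cycle of length at least four directly from a failure of $(J0)$): it is a plan, not an argument. Until one of these is executed, the implication ``$G$ chordal $\Rightarrow$ $J_G$ satisfies $(J0)$'' --- and hence the theorem --- remains unproven, even though everything you did write down (the non-simpliciality of $x,y$, the avoidance of $s$ by the witnessing paths, the fact that a simplicial vertex meets an induced path in at most two consecutive vertices) is correct and would survive in a completed proof.
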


\begin{lemma}\label{Jb1}\cite{momu-02}
The induced path transit function $J_G$ on a graph $G$ satisfies $(b1)$ if and only if $G$ is $(HHD)$-free.
\end{lemma}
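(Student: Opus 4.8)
The statement is an equivalence, so the plan is to prove the two implications separately: the ``contains a forbidden subgraph $\Rightarrow$ $(b1)$ fails'' direction by exhibiting explicit witnesses, and the ``$(HHD)$-free $\Rightarrow$ $(b1)$'' direction, which is the real content, by contraposition and a structural analysis of two crossing induced paths. For necessity I would prove the contrapositive: if $G$ has an induced house, hole, or domino, then $J$ violates $(b1)$. For each forbidden graph I would name three vertices $u,v,x$ with $x\neq v$ and write down one induced path witnessing $x\in J(u,v)$ together with a second induced path witnessing $v\in J(u,x)$. For a hole $C_n$ ($n\ge 5$) it suffices to take $u$ non-adjacent to both $v$ and $x$ (possible for $n\ge 5$): then both arcs between each such pair are chordless, so the arc through $x$ gives $x\in J(u,v)$ and the arc through $v$ gives $v\in J(u,x)$. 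For the house (apex $p$, ceiling $a,b$, floor $c,d$) the induced paths $p\,b\,d\,c$ and $p\,a\,c\,d$ work with $u=p$, $v=c$, $x=d$; for the domino a length-$3$ and a length-$4$ induced path work analogously. The single observation that makes these lift from the subgraph to $G$ is that a chordless path all of whose vertices lie in an \emph{induced} subgraph $H$ stays chordless in $G$, because $H$ inherits exactly the edges of $G$ on its vertex set; hence every witnessing path is induced in $G$ and the violation survives.

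For sufficiency I would assume $(b1)$ fails and produce a forbidden subgraph. A failure gives distinct $u,v,x$ with $x\neq v$, an induced $u,v$-path $P$ through $x$, and an induced $u,x$-path $Q$ through $v$. First I would record the forced non-adjacencies $u\not\sim v$ and $u\not\sim x$ (otherwise $J(u,v)$ or $J(u,x)$ would be a single edge and could not contain a third interior vertex), so that $P$ and $Q$ each have length at least three, and I would fix a violating configuration minimising $|V(P)\cup V(Q)|$. The key geometric feature is that $P$ and $Q$ \emph{cross}: $x$ precedes $v$ on $P$ while $v$ precedes $x$ on $Q$. Splitting $P$ at $x$ and $Q$ at $v$ and recombining the pieces along the crossing produces a cycle $Z$ through $u$, $x$ and $v$. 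In the typical situation $x\sim v$ one takes the $u,x$-subpath of $P$, the edge $xv$, and the $u,v$-subpath of $Q$; since the first and last pieces have length at least two, this $Z$ automatically has length at least five, and it is exactly what turns the house into a chorded $C_5$, the domino into a $C_6$ with an antipodal chord, and a plain hole into a chordless $C_n$ in the examples above.

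The crux, and the step I expect to absorb most of the work, is the analysis of $Z$ together with the edges between its two arcs and the possible extra overlaps of $P$ and $Q$. If $Z$ is chordless, the length bound forces a hole. If $Z$ carries chords, I would show the pattern is rigid: a single chord of a $C_5$ is a house, a single antipodal chord of a $C_6$ is a domino, and anything richer --- a longer chorded cycle, a second chord, the residual subcase $x\not\sim v$ producing a short cycle such as $C_4$, or an internal sharing of $P$ and $Q$ --- would let me reroute one of the two paths into a strictly smaller violating configuration, contradicting minimality. Controlling these chords and overlaps tightly enough to collapse every case either onto one of the three named graphs or onto such a contradiction is the main obstacle; the forced non-adjacencies $u\not\sim v$, $u\not\sim x$ and the minimal choice of $|V(P)\cup V(Q)|$ are the two levers I would rely on throughout.
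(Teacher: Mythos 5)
Your forward direction is correct and essentially complete: the paths $p\,b\,d\,c$ and $p\,a\,c\,d$ in the house, the two arcs determined by any three vertices of a hole, and the length-$3$ and length-$4$ paths in the domino are indeed induced, and your observation that an induced path inside an induced subgraph stays induced in $G$ is exactly what lifts each violation to $G$. The forced non-adjacencies $u\not\sim v$ and $u\not\sim x$ in the converse are also correctly justified, since adjacent endpoints force $J(u,v)=\{u,v\}$, and they do give both witnessing paths length at least three.

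The converse, however, contains a genuine gap, and you flag it yourself: everything after assembling the closed walk $Z$ is a plan rather than a proof, and that part is the entire content of the lemma. Concretely: (i) $Z$ need not be a cycle, because the $u,x$-subpath of $P$ and the $u,v$-subpath of $Q$ may meet in vertices other than $u$; you list this ``internal sharing'' as a case but never resolve it. (ii) The rigidity claim --- one chord in a $C_5$ gives a house, an antipodal chord in a $C_6$ gives a domino, and ``anything richer'' reroutes to a strictly smaller violating configuration --- is unproved, and rerouting is delicate: a chord joins an interior vertex of $P$ to an interior vertex of $Q$, and the rerouted configuration must still consist of an induced $u,v$-path containing $x$ \emph{together with} an induced $u,x$-path containing $v$ (or a new violating triple), while strictly decreasing your measure; none of this is checked, and it is not clear which of the two paths a given chord lets you reroute. (iii) When $x\not\sim v$ the union of the two paths is a theta-like structure rather than a single cycle, and an induced $C_4$ is not one of the forbidden graphs, so producing one concludes nothing by itself. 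For calibration: the paper does not prove this lemma (it quotes it from \cite{momu-02}), but its proof of the exactly analogous statement for the $m^3$-transit function, Lemma~\ref{polb1}, shows what the deferred step actually requires --- a page-long exhaustive analysis of subpath lengths and of every possible chord, extracting a named forbidden subgraph or a contradiction to the choice of the paths in each case. That analysis is precisely what your sketch still owes.
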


Since chordal graphs are also $(HHD)$-free graphs, it follows by Lemma~\ref{Jb1} that $J$ satisfies $(b1)$ on chordal graphs. Hence, to complete an alternative proof of Theorem \ref{moncon} we only need to show that $J$ satisfies $(Ch)$ on chordal graphs and we are done by Theorem \ref{cg}.

\begin{proposition}
The induced path function $J$ satisfies $(Ch)$ on chordal graphs.
\end{proposition}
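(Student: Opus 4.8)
The plan is to mirror the geodesic argument of Proposition~\ref{ChodalCh}, but with induced paths in place of geodesics and with \emph{every} contradiction extracted from chordality alone (an induced $C_n$, $n\ge 4$), never from a $3$-fan; this is exactly the feature that separates the monophonic case (needing only chordal) from the geodesic one (needing Ptolemaic). So I would assume $(Ch)$ fails and fix $u,v,w,x,y$ with $x\in J(u,v)$ and $y\in J(x,w)$ but $y\notin J(u,v)\cup J(u,w)\cup J(v,w)$, witnessed by an induced $u,v$-path $P$ through $x$ and an induced $x,w$-path $Q=xx_1\cdots x_pw$ through $y=x_m$. I would take a counterexample minimizing the length of $Q$, arrange that $P$ and $Q$ meet only in $x$ (common vertices are removed by relocating $x$ as below), and record the easy facts $x\notin\{u,v\}$ and $y\notin\{x,w\}$, so that $x$ has neighbours $u',v'$ on $P$ (with $u'$ towards $u$) and a neighbour $x_1$ on $Q$.

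Next I would carry out the neighbour analysis at $x$. If $x_1$ is adjacent to both $u'$ and $v'$, then rerouting $P$ through $x_1$ yields an induced $u,v$-path through $x_1$, so $x_1\in J(u,v)$, whence $x_1\neq y$ and $(u,v,w,x_1,y)$ together with the shorter path $Q[x_1,w]$ is again a counterexample, contradicting minimality; hence I may assume $u'x_1\notin E(G)$. I would then show that $u'$ is adjacent to no vertex of $Q$ other than $x$: if $x_j$ (with $j\ge2$ minimal) were such a vertex, then $xx_1\cdots x_ju'x$ would be an induced cycle of length at least four, impossible in a chordal graph.

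With $u'$ detached from $Q\setminus\{x\}$, the natural witness is the concatenation of $P[u,u']$, the edge $u'x$, and $Q$, which is a $u,w$-walk through $y$; if it is induced then $y\in J(u,w)$, the desired contradiction. The one real obstacle is a chord from an interior vertex $c$ of $P[u,u']$ to some $x_j$, since cleaning it could produce an induced $u,w$-path that \emph{bypasses} $y$. Here I would invoke the symmetric analysis on the $v$-side together with $y\notin J(v,w)$: if both sides carry bypassing chords, say $c_u\in P[u,u']$ adjacent to $x_j$ and $c_v\in P[v',v]$ adjacent to $x_k$ with $y$ lying on $Q[x_j,x_k]$, then splicing $P[u,c_u]$, the chord $c_ux_j$, the subpath $Q[x_j,x_k]$ through $y$, the chord $x_kc_v$, and $P[c_v,v]$ gives a $u,v$-walk through $y$, which I would reduce—using chordality to delete the surviving chords one at a time while keeping $y$ on the path—to an induced $u,v$-path through $y$, contradicting $y\notin J(u,v)$; the remaining placements of the indices $j,k$ relative to $m$ fold back into the $u$- or $v$-side induced-cycle contradictions. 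I expect this gluing step—showing that every bypassing chord can be turned into either an induced $C_n$ with $n\ge 4$ or an induced $u,v$-path through $y$—to be the crux of the argument, and the precise point where chordality, and nothing stronger, does the work.
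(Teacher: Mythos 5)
Your setup (minimizing $|Q|$, relocating $x$ to $x_1$ when $x_1$ is adjacent to both $u'$ and $v'$, then detaching $u'$ from $Q\setminus\{x\}$ via chordality) is sound, but the endgame has a genuine gap, and it sits exactly at the step you yourself call the crux. First, the reduction in your splicing case is not valid reasoning: from the $u,v$-walk $P[u,c_u]\,c_ux_j\,Q[x_j,x_k]\,x_kc_v\,P[c_v,v]$ you cannot in general "delete the surviving chords one at a time while keeping $y$ on the path". A chord from $P[u,c_u]$ (or from $c_u$ itself) to some $x_l$ with $l>m$ forces any induced path inside this vertex set either to bypass $y$ or to drop an endpoint of that chord, and nothing you have established rules this out; the whole content of the hypothesis $y\notin J(u,v)$ is precisely that walks through $y$ need not reduce to induced paths through $y$, so asserting the reduction begs the question. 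Second, the case analysis is incomplete. Your minimality argument yields $u'x_1\notin E(G)$ on one side only, so the "symmetric analysis on the $v$-side" is unavailable when $v'x_1\in E(G)$ (then $v'$ may have neighbours on $Q$ producing only triangles, hence no contradiction). Moreover, the claim that the remaining placements of $j,k$ "fold back into the $u$- or $v$-side induced-cycle contradictions" is unsubstantiated: the only cycles you construct involve $u'$ itself, while the remaining cases involve chords from vertices deep inside $P[u,u']$ or $P[v',v]$, for which you give no cycle construction. Note also that what $y\notin J(u,w)$ actually yields (by cleaning the walk $P[u,x]$ followed by $Q$) is a chord landing strictly \emph{after} $y$ on $Q$; there is no guarantee of the before/after pair that your splicing case presupposes, so that case may never even be triggered.

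The paper's proof avoids all of this and needs no minimal counterexample: from $y\notin J(u,w)$ and $y\notin J(v,w)$ it extracts chords $aa'$ and $bb'$ with $a\in P[u,x]$, $b\in P[x,v]$ and $a',b'$ strictly beyond $y$ on $Q$, chooses $a,b$ closest to $x$ and $a',b'$ closest to $y$, and observes that $P[a,b]$ (through $x$) together with $Q[a',b']$ (or with the single vertex $a'=b'$) induces a cycle of length at least four, contradicting chordality. Incidentally, your own scaffolding admits a completion that needs no splicing: once $u'$ has no neighbour on $Q\setminus\{x\}$, let $c$ be the vertex of $P[u,x)$ closest to $x$ having a neighbour on $Q\setminus\{x\}$ (necessarily $c\neq u'$, so $c$ is at distance at least two from $x$ on $P$) and let $x_j$ be its neighbour with $j$ minimal; then $P[c,x]\cup Q[x,x_j]$ plus the chord $cx_j$ is an induced cycle of length at least four. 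Hence no such chord exists, $P[u,x]$ followed by $Q$ is an induced $u,w$-path through $y$, and $y\in J(u,w)$ gives the contradiction directly. Either way, the step you left as "delete chords while keeping $y$" must be replaced by an induced-cycle construction; as written, the proof does not go through. (Both you and the paper also tacitly use $P\cap Q=\{x\}$; your relocation removes only common vertices occurring before $y$ on $Q$, so that too would need an explicit argument.)
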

 \begin{proof}
If possible suppose that $J$ does not satisfy $(Ch)$ for chordal graphs. That is, $x\in J(u,v)$, $y\in J(x,w)$,  $y\notin J(u,v)$, $y\notin J(u,w)$, and $y\notin J(v,w)$. Suppose $P$ is an induced $u,v$-path containing $x$ and $Q$ be an induced $x,w$-path containing $y$. Since $y\notin J(u,v)$, $y$ is not on $P$ and $y$ is not adjacent to both $u$ and $v$. Now, $y\notin J(u,w)$ and $y\notin J(v,w)$ implies that there is a chord from $u,x$-subpath of $P$ to $y,w$-subpath of $Q$ and a chord from $v,x$-subpath of $P$ to $y,w$-subpath of $Q$. Let $a$ be the vertex closest to $x$ in the $u,x$-subpath of $P$, which has a neighbor, denoted as $a'$, in the $y,w$-subpath of $Q$. Similarly, let $b$ be the vertex closest to $x$ in the $v,x$-subpath of $P$, which has a neighbor, denoted as $b'$, in the $y,w$-subpath of $Q$. Clearly, $a$ and $b$ can be chosen in such a way that $a'$ and $b'$ are different from $y$ and we may choose $a'$ and $b'$ to be as close as possible to $y$ on $Q$. If $a'= b'$, then the vertices on the $a,b$-subpaths of $P$, along with the vertex $a'$, induce a cycle of length at least four. This contradicts the assumption that $G$ is chordal. If $a'\neq b'$, then the vertices on the $a,b$-subpaths of $P$ along with the vertices on the $a',b'$-subpaths of $Q$ induce a cycle of length more than four, the same contradiction again. 
\end{proof}

\subsection{$m^3$-convexity}

For a graph $G$, an $m^3$-path refers to an induced path of length at least three. The $m^3$-transit function is a path transit function defined as $m^3: V\times V \longrightarrow 2^{V}$ where $m^3(u,v)$ equals to
$$\{w\in V(G): w \text{ lies on some }u,v\text{-induced path of length at least three in }G \}$$ 
The $m^3$-convexity is generated by the $m^3$-transit function.
 In \cite{drni},  it has been shown that weak bipolarizable graphs are the convex geometry with respect to $m^3$-convexity. A graph is \emph{weak bipolarizable} if it is $HHDA$-free, see \cite{ola}, where house $H$, hole $H$, domino $D$, and $A$-graph are depicted in Figure \ref{hhd}. 
 
\begin{theorem}\cite{drni}
The  $m^3$-convexity of a graph $G$ is a convex geometry if and only if  $G$ is weak bipolarizable.
\end{theorem}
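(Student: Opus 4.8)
The plan is to recover this cited result through the axiomatic machinery of Section~\ref{convex-transit}, exactly as was done for geodesic and monophonic convexities. Recalling that the weak bipolarizable graphs are precisely the $HHDA$-free graphs, I would establish the correspondences between the three axioms $(Ch)$, $(b1)$, $(J0)$ for the transit function $m^3$ and this graph class, and then let Theorem~\ref{cg} do the work. Concretely, once I know that $m^3$ satisfies $(Ch)$ on every weak bipolarizable graph, Theorem~\ref{cg} reduces the convex geometry property to the conjunction $(b1)\wedge(J0)$, and it remains to line these two axioms up with the $HHDA$-free condition.

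First I would pin down the two more routine axioms. For $(J0)$ I would prove (in the spirit of the $(J0)$-characterizations already recorded for $I$ and $J$) that $m^3$ satisfies $(J0)$ if and only if $G$ is weak bipolarizable; the ``if'' part feeds the forward implication, while the ``only if'' part is what drives the converse. For $(b1)$ I would verify that $m^3$ satisfies $(b1)$ on weak bipolarizable graphs: assuming $x\in m^3(u,v)$, $x\neq v$ and $v\in m^3(u,x)$, one concatenates the two witnessing induced paths of length at least three and extracts a forbidden house, hole, domino or $A$-graph, so that no such configuration survives in an $HHDA$-free graph.

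The heart of the argument, and the step I expect to be the main obstacle, is the analogue of Proposition~\ref{ChodalCh}: that $m^3$ satisfies $(Ch)$ on weak bipolarizable graphs. Here I would argue by contradiction, assuming $x\in m^3(u,v)$, $y\in m^3(x,w)$ with $y$ lying in none of $m^3(u,v)$, $m^3(u,w)$, $m^3(v,w)$, and choosing the configuration so that the length of the witnessing $x,w$-path $Q$ is minimal. Fixing an induced $u,v$-path $P$ of length at least three through $x$ and the path $Q$ through $y$, I would analyse how $P$ and $Q$ meet: the neighbours of $x$ on $P$, the first chord from the $u,x$- and $v,x$-subpaths of $P$ into the $y,w$-subpath of $Q$, and so on, just as in the monophonic proof. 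Each way the chords can attach is then shown to create an induced cycle of length at least four, or an induced house, domino or $A$-graph, contradicting $HHDA$-freeness. The extra difficulty compared with the chordal case is the length-at-least-three requirement: concatenations that would immediately place $y$ into one of the three transit sets in the $J$-setting may here yield a path that is too short, so one must additionally track path lengths and treat the short exceptional paths separately, and this is precisely where the $A$-graph and domino obstructions (absent from plain chordality) enter.

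With these three facts in hand the theorem assembles as follows. If $G$ is weak bipolarizable, then $(Ch)$ holds by the main proposition while $(b1)$ and $(J0)$ hold by the two correspondences, so Theorem~\ref{cg} yields that $\mathcal{C}_{m^3}$ is a convex geometry. For the converse I would use the $(J0)$-characterization: if $G$ is not weak bipolarizable, then $m^3$ fails $(J0)$, and I would convert this failure into an explicit violation of the anti-exchange axiom. The delicate point is that here I cannot invoke Theorem~\ref{Cg1} directly, since $m^3$ need not be monotone off the class (it already fails monotonicity on a $C_5$), so the set $m^3(u,v)$ used in that proof need not be convex; instead I would localise the $(J0)$-violation inside an induced house, hole, domino or $A$-graph and check by hand that the convex hull of the two outer vertices, together with the mutual hull-membership of the two inner vertices, breaks anti-exchange, giving that $\mathcal{C}_{m^3}$ is not a convex geometry.
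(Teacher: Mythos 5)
Your forward direction matches the paper's route (it proves exactly the three facts you outline: Lemma~\ref{polb1}, Lemma~\ref{poljo}, Lemma~\ref{polCg}, then invokes Theorem~\ref{cg}), but your converse rests on a false lemma. You claim that $m^3$ satisfies $(J0)$ if and only if $G$ is weak bipolarizable, and you drive the entire converse by ``if $G$ is not weak bipolarizable, then $m^3$ fails $(J0)$''. This is wrong: the correct characterization (the paper's Lemma~\ref{poljo}) is that $m^3$ satisfies $(J0)$ if and only if $G$ is (hole,$A$)-free, while $(b1)$ holds if and only if $G$ is $(HHD)$-free (Lemma~\ref{polb1}); it is only the \emph{conjunction} of the two axioms that characterizes $HHDA$-freeness. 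A concrete counterexample to your claim is the house itself: it contains no induced hole and no $A$-graph (it has only five vertices and contains a triangle), so $(J0)$ holds on it, yet it is not weak bipolarizable. The same applies to the domino. Consequently your plan to ``localise the $(J0)$-violation inside an induced house, hole, domino or $A$-graph'' cannot be carried out for the house and domino cases --- there is no $(J0)$-violation to localise --- and your converse collapses on every graph that contains a house or a domino but no hole and no $A$-graph.

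The repair is to split the obstructions between the two axioms. If $G$ contains a house, hole or domino, then $(b1)$ fails by Lemma~\ref{polb1}, and failure of $(b1)$ already destroys the convex geometry property for \emph{any} transit function, monotone or not: taking the singleton convex set $K=\{u\}$ one gets $x\in\left\langle K\cup v\right\rangle$ and $v\in\left\langle K\cup x\right\rangle$, violating anti-exchange (this is exactly the remark the paper makes after Theorem~\ref{Cg1}). Only in the remaining case --- $G$ is $(HHD)$-free but contains an induced $A$-graph --- is $(J0)$ the failing axiom, and there your proposed hand-verification of an anti-exchange violation inside the $A$-graph is genuinely needed and cannot be waved away: as the paper's Example~\ref{not m} shows, a non-monotone transit function can violate $(J0)$ and still generate a convex geometry, so the failure of $(J0)$ alone proves nothing without such an explicit computation (and one must also argue that the hulls computed inside the induced $A$-graph are not disturbed by the rest of $G$). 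With this two-case converse in place, the rest of your outline is sound and is essentially the paper's.
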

 
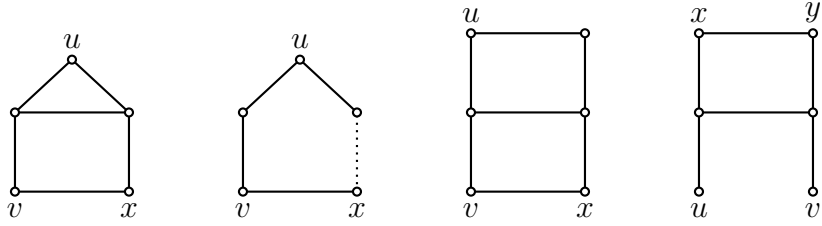
\begin{figure}[ht]
\begin{center}
\begin{tikzpicture}[scale=0.5,style=thick,x=1cm,y=0.7cm]
\def\vr{3pt} 

\path (0,0) coordinate (a);
\path (3,0) coordinate (b);
\path (3,3) coordinate (c);
\path (0,3) coordinate (d);
\path (1.5,5) coordinate (e);

\draw (a) -- (b) -- (c) -- (d) -- (a);
\draw (c) -- (e) -- (d);

\draw (a) [fill=white] circle (\vr);
\draw (b) [fill=white] circle (\vr);
\draw (c) [fill=white] circle (\vr);
\draw (d) [fill=white] circle (\vr);
\draw (e) [fill=white] circle (\vr);

\draw[anchor = north] (a) node {$v$};
\draw[anchor = north] (b) node {$x$};
\draw[anchor = south] (e) node {$u$};


\path (6,0) coordinate (a1);
\path (9,0) coordinate (b1);
\path (9,3) coordinate (c1);
\path (6,3) coordinate (d1);
\path (7.5,5) coordinate (e1);

\draw (a1) -- (b1); 
\draw (c1) -- (e1) -- (d1) -- (a1);
\draw[dotted] (b1) -- (c1);
\draw (a1) [fill=white] circle (\vr);
\draw (b1) [fill=white] circle (\vr);
\draw (c1) [fill=white] circle (\vr);
\draw (d1) [fill=white] circle (\vr);
\draw (e1) [fill=white] circle (\vr);

\draw[anchor = north] (a1) node {$v$};
\draw[anchor = north] (b1) node {$x$};
\draw[anchor = south] (e1) node {$u$};


\path (12,0) coordinate (a2);
\path (15,0) coordinate (b2);
\path (15,3) coordinate (c2);
\path (12,3) coordinate (d2);
\path (12,6) coordinate (e2);
\path (15,6) coordinate (f2);

\draw (a2) -- (b2) -- (c2) -- (d2) -- (a2);
\draw (c2) -- (f2) -- (e2) -- (d2);

\draw (a2) [fill=white] circle (\vr);
\draw (b2) [fill=white] circle (\vr);
\draw (c2) [fill=white] circle (\vr);
\draw (d2) [fill=white] circle (\vr);
\draw (e2) [fill=white] circle (\vr);
\draw (f2) [fill=white] circle (\vr);

\draw[anchor = north] (a2) node {$v$};
\draw[anchor = north] (b2) node {$x$};
\draw[anchor = south] (e2) node {$u$};


\path (18,0) coordinate (a3);
\path (21,0) coordinate (b3);
\path (21,3) coordinate (c3);
\path (18,3) coordinate (d3);
\path (18,6) coordinate (e3);
\path (21,6) coordinate (f3);

\draw  (b3) -- (c3) -- (d3) -- (a3);
\draw (c3) -- (f3) -- (e3) -- (d3);

\draw (a3) [fill=white] circle (\vr);
\draw (b3) [fill=white] circle (\vr);
\draw (c3) [fill=white] circle (\vr);
\draw (d3) [fill=white] circle (\vr);
\draw (e3) [fill=white] circle (\vr);
\draw (f3) [fill=white] circle (\vr);

\draw[anchor = north] (a3) node {$u$};
\draw[anchor = north] (b3) node {$v$};
\draw[anchor = south] (f3) node {$y$};
\draw[anchor = south] (e3) node {$x$};

\end{tikzpicture}
\end{center}
\caption{Graphs house $H$, hole $H$, domino $D$ and $A$-graph (from left to right).} \label{hhd}
\end{figure}

\begin{lemma}\label{polb1}
Let $G$ be a graph. The $m^3$-transit function on $G$ satisfies $(b1)$ if and only if $G$ is $(HHD)$-free.
\end{lemma}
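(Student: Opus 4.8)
The plan is to treat the two implications separately and to exploit the already-established companion result Lemma~\ref{Jb1} for the induced path function $J$. The key elementary observation is the inclusion $m^3(a,b)\subseteq J(a,b)$ for all $a,b\in V$, which holds because every induced path of length at least three is in particular an induced path. For the \emph{if} direction I would argue by contraposition: suppose $m^3$ violates $(b1)$, so that there are vertices $u,v,x$ with $x\in m^3(u,v)$, $x\neq v$ and $v\in m^3(u,x)$. The inclusion at once yields $x\in J(u,v)$, $x\neq v$ and $v\in J(u,x)$, which is precisely a violation of $(b1)$ for $J$. By Lemma~\ref{Jb1} this forces $G$ to contain an induced house, hole or domino, i.e.\ $G$ is not $(HHD)$-free. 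Hence, contrapositively, $(HHD)$-freeness of $G$ implies that $m^3$ satisfies $(b1)$, and this direction costs essentially one line once the inclusion is recorded.

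For the \emph{only if} direction the inclusion goes the wrong way---a failure of $(b1)$ for $J$ may be witnessed only by short paths---so I would instead exhibit explicitly, inside each forbidden configuration, a triple $u,v,x$ witnessing a failure of $(b1)$ for $m^3$. Adopting the labels of Figure~\ref{hhd}: in the house, with $u$ the apex, the path $u,c,x,v$ is an induced $u,v$-path of length three through $x$ and $u,d,v,x$ is an induced $u,x$-path of length three through $v$, so $x\in m^3(u,v)$ and $v\in m^3(u,x)$ with $x\neq v$. In the domino the same role is played by the length-four induced path $u,f,c,x,v$ and the length-three induced path $u,d,v,x$. For a hole $C_n$ with $n\ge 5$, written in the cyclic order $u,d,v,x,\dots,c,u$, the long arc $u,c,\dots,x,v$ has length $n-2\ge 3$ and passes through $x$, while the arc $u,d,v,x$ has length three and passes through $v$; both are arcs of an induced cycle and hence induced paths, and $n\ge 5$ guarantees the non-adjacencies $uv,ux,dx\notin E(G)$ needed for the second one.

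The one point requiring care---and the step I regard as the crux---is the legitimacy of these \emph{local} constructions in the \emph{ambient} graph $G$. Since ``being an induced path'' depends only on the edges among the path's own vertices, and since the house, hole or domino is by hypothesis an induced subgraph of $G$, there are no additional chords among its vertices in $G$; consequently each path exhibited above remains induced in $G$ and retains length at least three. Thus each forbidden subgraph genuinely produces a $(b1)$-violation for $m^3$, completing the contrapositive of the \emph{only if} direction. The remaining work is the routine, case-by-case verification that the listed vertex sequences carry no chords (for instance, that the apex of the house is adjacent to neither $v$ nor $x$), which is immediate from the adjacency patterns in Figure~\ref{hhd}.
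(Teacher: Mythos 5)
Your proof is correct, and for the hard direction it takes a genuinely different route from the paper's. The paper proves ``$(HHD)$-free $\Rightarrow$ $m^3$ satisfies $(b1)$'' from scratch: assuming $x\in m^3(u,v)$, $v\neq x$, $v\in m^3(u,x)$, it runs a long case analysis on the witnessing induced paths to manufacture a house, hole or domino directly. You instead use the pointwise inclusion $m^3(a,b)\subseteq J(a,b)$: a violation of $(b1)$ is an existential statement about membership in transit sets, so it is preserved under pointwise inclusion; hence any violation for $m^3$ is a violation for $J$, and Lemma~\ref{Jb1} (stated earlier in Subsection~\ref{mc} and quoted from the literature, so there is no circularity) immediately yields the forbidden configuration. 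This collapses the bulk of the paper's proof to one line; the trade-off is that your argument inherits the full strength of the cited result for $J$, whereas the paper's case analysis is self-contained, but since the paper already invokes Lemma~\ref{Jb1} elsewhere, nothing is really lost, and your reduction makes transparent why the same forbidden family appears for both $J$ and $m^3$. Your treatment of the other direction coincides with the paper's in substance, except that the paper only asserts the violations ``for vertices denoted on Figure~\ref{hhd}'' while you exhibit the witnessing paths explicitly; I checked them (house: $u,c,x,v$ and $u,d,v,x$; domino: $u,f,c,x,v$ and $u,d,v,x$; hole: the long arc through $x$ and the arc $u,d,v,x$), and each is induced, of length at least three, and remains induced in $G$ for exactly the reason you give, namely that chordlessness depends only on adjacencies among the path's own vertices, which an induced subgraph preserves. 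You also correctly identify why the inclusion cannot be reused in this direction. One cosmetic point: the vertices you call $c$, $d$, $f$ are unlabeled in Figure~\ref{hhd}, so a final write-up should name them explicitly.
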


\begin{proof}
If $G$ contains a house, a hole or a domino as an induced subgraph, then $m^3$-transit function does not satisfy  $(b1)$ for vertices denoted on Figure~\ref{hhd}. Conversely, suppose that the $m^3$-transit function does not satisfy $(b1)$ on $G$. That is, $x\in m^3(u,v)$, $v\neq x$ and $v\in m^3(u,x)$. Indeed, $x\in m^3(u,v)$ implies that there is an $u,v$-induced path of length at least three containing $x$, and $v\in m^3(u,x)$ implies that there is an  $u,x$-induced path of length at least three containing $v$. Let $P=ux_1 \dots x_nx$ be a $u,x$-induced path, $Q=xv_1\dots v_tv$ be an $x,v$-induced path and $R:uu_1\dots u_mv$ be a $u,v$-induced path without a neighbor of $x$ (except possibly $v$) where $Q\cup R$ form an induced $x,u$-path. We have $n,m\geq 1$ since $ux,vu\notin E(G)$, while $t\geq 0$ ($t=0$ means that $xv\in E(G)$). Let $a$ be the vertex common to $P$ and $R$ closest to $x$. Clearly, $a$ is not $x$ and not a neighbor of $x$ on $P$. If $a\neq u$, then replace $a$ by $u$ without changing the assumptions. In addition, let us choose $u,x,v$ in such a way that $P$ is a short as possible. If $u_1x_i\in E(G)$ for some $i\in \{1,\dots,n-1\}$, then we can replace $u$ by $x_i$ and we have a contradiction with the choice of $P$. This means, that $x_1,\dots,x_{n-1}$ are not adjacent to $u_1$. If $n>2$, then $u_1,u,x_1,\dots,x_n$ (possibly together with some other vertices) form an induced cycle of length at least five and we have a hole. 

Let now $n=2$. If $x_2u_1\notin E(G)$, then $u_1,u,x_1,x_2,x$ (possibly together with some other vertices) form a hole again. So, we may assume that $x_2u_1\in E(G)$. If $m=1$, then we have a domino if $Q=xv$ and a hole if $t\geq 1$. Now we have $m>1$. If $x_2u_2\in E(G)$, then $\{u,u_1,u_2,x_2,x_1\}$ induce a house. Otherwise, when $x_2u_2\notin E(G)$, then vertices $\{u,u_1,u_2,u_3,x_2,x_1\}$ induce a domino when $x_2u_3\in E(G)$ (notice that this is possible when $m\geq 3$) or $\{x_2,u_1,\dots,u_i\}$ a hole when $u_i$, $i\geq 3$, is the first vertex adjacent to $x_2$ on $R$ after $u_1$. Finally, $x_2u_1\xrightarrow{R}v\xrightarrow{Q}xx_2$ is a hole when $x_2$ is not adjacent to any other vertex of $R$ than $u_1$.

We are left with $n=1$. If $x_1$ is not adjacent to two consecutive vertices $u_i$ and $u_{i+1}$ (where $u_{i+1}$ may equal to $v$), then $x_1,u_{i-1},u_i,u_{i+1}u_{i+2}$ with possibly some other vertices induce a hole (if $u_{i+1}=v$, then $u_{i+2}=v_t$). Suppose next that $x_1u_i\notin E(G)$ for some $i\in \{1,\dots,m\}$. This means that $u_{i-1}x_1,u_{i+1}x_1\in E(G)$, where $u_{i-1}$ may equal to $u$ and $u_{i+1}$ may equal to $v$. Let $y\in \{u_{i-2},u_{i+2}\}$ when it exists. If $yx_i\in E(G)$, then $x_1,u_{i-1},u_i,u_{i+1},y$ induce a house. Otherwise, if $yx_i\notin E(G)$, then $x_1,u_{i-1},u_i,u_{i+1},y$ together with the other neighbor $z$ of $y$ on $R$ induce a domino. Here $z$ may equal to $u$ when $y=u_1$ or $z$ may equal to $v_t$ when $y=v$ and $x_1v_t\in E(G)$ in this case, since otherwise $x_1,u_{m},v,v_t,v_{t-1}$ and possibly some other vertices induce a hole. The last option is that $y$ does not exists, which means that $m=1$ and $x_1u_1\in E(G)$, because otherwise two consecutive vertices $u_1$ and $v$ of $R$ are not adjacent to $x_1$. If $xv\in E(G)$, then $u,x_1,x,v,u_1$ induce a house. Otherwise, let $v_j$ be the last neighbor of $x_1$ on $Q$ for $j\in \{0,1,\dots,t\}$, where $v_0=x$. If $j=t$, then $u,u_1,v,v_t,x_1$ induce a house. If $j<t$, then $v_j,v_{j+1},\dots,v_t,v,u_1,x_1$ induce a hole. Thus we obtained a hole, a house or a domino in every possible situation and the proof is completed.  
\end{proof} 

\begin{lemma}\label{poljo}
   Let $G$ be a graph. The $m^3$-transit function defined on $V(G)$ satisfies $(J0)$ if and only if $G$ is (hole,$A$)-free.
\end{lemma}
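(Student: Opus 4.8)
The plan is to prove the equivalence by contraposition in both directions, using repeatedly the elementary fact that \emph{for adjacent vertices $a,b$ we have $m^3(a,b)=\{a,b\}$}: any $a,b$-path of length at least two has non-consecutive end vertices, so the edge $ab$ would be a chord and no induced $a,b$-path of length $\ge 3$ can exist. The force of this remark is that a non-membership $w\notin m^3(a,b)$ with $ab\in E(G)$ holds in \emph{every} graph containing the edge $ab$; that is, it is robust against enlarging $G$, which is exactly the issue that makes $(J0)$ (unlike $(b1)$) delicate, since $(J0)$-failure involves a negative condition $x\notin m^3(u,v)$.

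\emph{Necessity.} I show that a forbidden subgraph destroys $(J0)$. If $G$ has an induced hole $c_0c_1\cdots c_{n-1}$ with $n\ge5$, I take $u=c_0$, $v=c_1$, $x=c_{n-1}$, $y=c_2$. The arc $c_0c_{n-1}\cdots c_2$ (missing $c_1$) is an induced path of length $n-2\ge 3$ through $c_{n-1}$, so $x\in m^3(u,y)$; the arc $c_{n-1}\cdots c_2c_1$ (missing $c_0$) gives $y\in m^3(x,v)$; and $c_0c_1\in E(G)$ forces $m^3(u,v)=\{c_0,c_1\}\not\ni x$. As $n\ge 5$ these four vertices are distinct, so $(J0)$ fails, and every ingredient is robust. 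If $G$ has an induced $A$ on $u,q,p,v,x,y$ (edges $uq,qp,pv,qx,xy,yp$, with $u,v$ the two degree-one vertices), the chordless paths $u$-$q$-$x$-$y$ and $x$-$y$-$p$-$v$ give $x\in m^3(u,y)$ and $y\in m^3(x,v)$ robustly. Here $m^3(u,v)$ is \emph{not} robust, so I split: if $x\notin m^3(u,v)$ then $(u,v,x,y)$ already violates $(J0)$; if instead some induced $u,v$-path runs through $x$, I take a shortest such path and chase the violation one step along it, the neighbour $s$ of $x$ on its $u$-side yielding a shifted quadruple (in the tightest case $(v,u,y,s)$) that violates $(J0)$.

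\emph{Sufficiency.} Suppose $(J0)$ fails: there are distinct $u,v,x,y$ with $x\in m^3(u,y)$, $y\in m^3(x,v)$ and $x\notin m^3(u,v)$. Choose a shortest induced $u,y$-path $P$ through $x$ and a shortest induced $x,v$-path $Q$ through $y$; both have length $\ge 3$ and in each $x$ precedes $y$. Let $P_{1}$ be the $u$-to-$x$ part of $P$ and consider the walk $W$ that follows $P_{1}$ from $u$ to $x$ and then $Q$ from $x$ through $y$ to $v$. If $W$ were an induced path it would be a $u,v$-path of length $\ge 3$ through $x$, contradicting $x\notin m^3(u,v)$. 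Hence $W$ repeats a vertex or carries a chord, and the minimal such obstruction is what manufactures a forbidden subgraph: a chord joining $P_1$ to the $y$-to-$v$ part of $Q$ closes up an induced cycle, which by $x\not\sim v$ and the minimality of $P,Q$ has length $\ge 5$ (a hole); while the tightest configuration, in which $P_1=u$-$q$-$x$ and $Q=x$-$y$-$p$-$v$ with the single chord $qp$, is precisely an induced $A$ on $\{u,q,x,y,p,v\}$. As $G$ is assumed $($hole$,A)$-free, no such configuration exists, so $(J0)$ holds.

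\emph{Main obstacle.} The crux is the chord bookkeeping in the sufficiency direction: after passing to shortest certifying paths one must verify that \emph{every} way in which $W$ fails to be induced produces either a chordless cycle of length at least five or exactly the six-vertex graph $A$, and nothing else (in particular no house or domino, which are the obstructions for $(b1)$, not for $(J0)$). A secondary but genuine subtlety is the non-robustness of $x\notin m^3(u,v)$ in the necessity argument for $A$: because an external induced $u,v$-path through $x$ can destroy the naive violation without creating a hole, that case cannot be settled by a one-line local check and instead requires shifting the violation along a shortest such path.
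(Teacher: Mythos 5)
Your proposal has the same overall architecture as the paper's proof (forbidden subgraphs produce violations; a violation produces, via the concatenated walk and a chord analysis, a hole or an induced $A$), and you correctly spot a subtlety that the paper itself passes over silently: for an induced $A$-subgraph the condition $x\notin m^3(u,v)$ is not inherited by the host graph. However, your repair of that step is a genuine gap, and its one concrete claim is false. Take $G$ to be the $A$-graph (vertices $u,q,p,v,x,y$, edges $uq,qp,qx,xy,yp,pv$) plus a single vertex $z$ adjacent to $u$ and $x$. The unique induced $u,v$-path of length at least three through $x$ is $u\,z\,x\,y\,p\,v$, so your $s=z$; but this path also contains $y$, hence $y\in m^3(u,v)$, and your shifted quadruple $(v,u,y,z)$ violates nothing: the two memberships $y\in m^3(v,z)$ and $z\in m^3(y,u)$ do hold, but the required non-membership $y\notin m^3(v,u)$ fails. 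A $(J0)$-violation does exist in this graph---for instance $u\in m^3(v,z)$, $z\in m^3(u,y)$, $u\notin m^3(v,y)$---but it has a different shape (the pair carrying the non-membership must be $(v,y)$, not $(u,v)$), and verifying $m^3(v,y)=\{v,y\}$ there uses that $v$ has degree one, which is not available in a general supergraph. So neither a one-step nor any bounded ``chase'' is shown to terminate in a violation; proving that every graph containing an induced $A$ fails $(J0)$ requires a genuinely global argument, which your proposal (and, to be fair, the paper's one-line necessity claim) does not supply.

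On the sufficiency direction your sketch also replaces the content with an assertion, and one statement is wrong as written: the cycle closed by the obstructing chord need not have length at least five---it can be a $C_4$, and that is precisely the case that produces the $A$-graph, so the claimed dichotomy ``$\ge 5$ by $x\not\sim v$ and minimality, except the tightest configuration'' is incoherent. The actual work, which is what the paper's proof consists of, is: justify that the chord can be taken from the $u,x$-part of $P$ to the $y,v$-part of $Q$ (chords into the $x,y$-part of $Q$, and shared vertices, must be excluded or handled); choose $a$ closest to $x$ with neighbour $b$ on $Q$; if the resulting cycle is not induced, re-choose chords to extract a hole; if it is induced of length at least five, it is a hole; and if it is the four-cycle $axyb$, use that $P$ and $Q$ have length at least three to obtain the pendant neighbours $u'$ of $a$ and $v'$ of $b$ and verify that the six vertices induce $A$ and nothing else. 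Your ``main obstacle'' paragraph names this bookkeeping accurately, but naming it is not doing it; as it stands both halves of your proof are missing exactly the steps where the lemma's content lies.
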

\begin{proof}
First assume $G$ contains a hole graph with $y,v,u,x$ as its consecutive vertices or a $A$-graph with vertices as shown on Figure~\ref{hhd}, then  $x\in m^3(u,y)$, $y\in m^3(x,v)$  and $x\notin m^3(u,v)$. That is, the $m^3$-transit function does not satisfy $(J0)$. 

Conversely, suppose that $(J0)$ does not hold on vertices $u,v,x,y$ for $m^3$-transit function on $G$. There exists an induced $u,y$-path, say $P$, containing $x$ of length at least three and an induced $x,v$-path, say $Q$, containing $y$ of length at least three. Since $x\notin m^3(u,v)$ there is a chord $ab$ from a vertex in the $u,x$-subpath of $P$ to a vertex in  the $y,v$-subpath of $Q$. Let $a$ be the vertex closest to $x$ on $P$ that has a neighbor $b$ on $Q$. In this case, the chord $ab$ and the path  $a\xrightarrow{P}y\xrightarrow{Q}b$ form a cycle $C$. If $C$ is not induced, then let $a'$ is the first vertex from $x,y$-subpath of $P$ with a neighbor $b'$ from $y,b$-subpath of $Q$. Further we choose $b'$ to be first such vertex on $y,b$-subpath of $Q$. Clearly, $a\neq x\neq a'$. Cycle $a\xrightarrow{P}a'b'\xrightarrow{Q}ba$ is a hole and we are done. Otherwise $C$ is induced of lengrt at least four. If the length of $C$ is at least five, then we have a hole again. If $C=axyba$, then $a\neq u$ and $b\neq v$ because $P$ and $Q$ have lenght at least three. Let $u'$ be the neighbor of $a$ in the $a,u$-subpath $P$ and let $v'$ be the neighbor of $b$ in the $b,v$-subpath $Q$. Vertices $u'$ and $v'$ together with the vertices of $C$ induce an $A$-graph. 
\end{proof}
 
\begin{lemma}\label{polCg}
The  $m^3$-transit function satisfies $(Ch)$ on $HHDA$-free graphs.
\end{lemma}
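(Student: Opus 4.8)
The plan is to prove the contrapositive in the style of the companion results for geodesic and monophonic convexity (Proposition~\ref{ChodalCh} and its successor): assume $(Ch)$ fails for $m^3$ and derive a forbidden induced subgraph, namely a house, hole, domino, or $A$-graph. So suppose there are vertices with $x\in m^3(u,v)$, $y\in m^3(x,w)$, but $y\notin m^3(u,v)$, $y\notin m^3(u,w)$ and $y\notin m^3(v,w)$. By definition there is an induced $u,v$-path $P$ of length at least three through $x$, and an induced $x,w$-path $Q$ of length at least three through $y$. The three negative conditions should be converted into structural statements about chords: since $y\notin m^3(u,v)$, the vertex $y$ lies on no induced $u,v$-path of length $\geq 3$; since $y\notin m^3(u,w)$ and $y\notin m^3(v,w)$, every induced $u,w$-path and every induced $v,w$-path avoids $y$, which (as in the chordal case) forces chords linking the $x$-side of $P$ to the $y,w$-subpath of $Q$.

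The key steps, in order, would be: first, extract from $y\notin m^3(u,w)$ a chord $aa'$ with $a$ on the $u,x$-subpath of $P$ and $a'$ on the $y,w$-subpath of $Q$, and from $y\notin m^3(v,w)$ a chord $bb'$ with $b$ on the $v,x$-subpath of $P$ and $b'$ on the $y,w$-subpath of $Q$; here I would choose $a,b$ closest to $x$ and $a',b'$ closest to $y$, exactly as in the monophonic proof. Second, analyze the short cycle formed by the $a,x,b$-portion of $P$ together with the chords and the relevant piece of $Q$. If $a'=b'$ or the connecting subpath of $Q$ is long, one immediately gets an induced cycle of length at least four; combined with the hypothesis that $Q$ has length $\geq 3$ (so it is genuinely induced and long enough to keep $y$ off the short cuts), the cases with a long cycle yield a hole. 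Third, and this is where the $A$-graph must enter, handle the tight configurations where the cycle has length exactly four or five: here the ``length at least three'' constraint on $P$ and $Q$ guarantees extra neighbours $u'$ of $a$ toward $u$ and a corresponding vertex toward $w$, and attaching these pendant vertices to a $4$-cycle produces precisely the house, domino, or $A$-graph, mirroring the final case-split in Lemma~\ref{poljo}.

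I expect the main obstacle to be the bookkeeping in the tight cases, where the minimal induced cycle has length four and one must decide, based on the presence or absence of specific edges between the off-path neighbours ($u'$, the neighbour of $b$ toward $v$, and the neighbours $a',b'$ along $Q$), whether the forbidden configuration that appears is a house, a domino, or an $A$-graph. Unlike the monophonic case, where chordality rules out every cycle of length $\geq 4$ outright, here $m^3$ only forbids the four specific subgraphs, so the argument cannot simply stop at ``a $4$-cycle is a contradiction''; instead each $4$-cycle must be enriched by the mandatory extra vertices coming from the length-$\geq 3$ requirement and shown to induce one of $H$, $D$, or $A$. A secondary subtlety is ensuring $a'$ and $b'$ can be chosen different from $y$ and as close to $y$ as possible, so that no unwanted chord creates a shorter induced path through $y$ and contradicts one of the three negative hypotheses; this minimality selection is what keeps the case analysis finite and is the same device used successfully in the preceding two propositions.
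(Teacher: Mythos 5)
Your overall strategy -- prove the contrapositive and extract a house, hole, domino, or $A$-graph -- is the same as the paper's, and your tight-case plan (enrich a short induced cycle with the extra vertices forced by the length-$\geq 3$ requirement) matches the spirit of part of the paper's argument. But there is a genuine gap at your first step. You assert that $y\notin m^3(u,w)$ and $y\notin m^3(v,w)$ mean ``every induced $u,w$-path and every induced $v,w$-path avoids $y$,'' and that this forces chords ``as in the chordal case.'' That is a misreading of $m^3$: these hypotheses only exclude induced paths \emph{of length at least three} through $y$; induced paths of length two through $y$ are perfectly compatible with them. This is exactly the point where $m^3$ differs from the monophonic function $J$, and it is not cosmetic. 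When you concatenate the $u,x$-subpath of $P$ with a piece of $Q$ to force a chord, you must verify the resulting induced path has length at least three, and this verification fails precisely when the $y,w$-subpath of $Q$ is a single edge, i.e.\ $yw\in E(G)$. For this reason the paper's proof splits at the outset into the cases $yw\in E(G)$ and $yw\notin E(G)$, and the first case -- roughly half of the proof -- has a different geometry from the one you describe: there one deduces that $u$ must be adjacent to $y$ or to $w$ (and likewise $v$), and the forbidden subgraphs are located on the cycle formed by \emph{all} of $P$ together with the edges $uy/uw$ and $vy/vw$, not on a cycle through $x$ cut off by internal chords. Relatedly, your ``secondary subtlety'' -- that $a'$ and $b'$ can always be chosen different from $y$ -- is false in this case: if $uy\in E(G)$ and no other vertex of the $u,x$-subpath of $P$ sees $\{y,w\}$, then $a=u$ and $a'=y$ is forced. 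The paper proves $a'\neq y$ only under the assumption $yw\notin E(G)$.

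A second, smaller omission: even in the case $yw\notin E(G)$, where your outline does track the paper's argument (minimal chords $aa'$, $bb'$, reduction to $a=u_1$, $b=v_1$, $a'=b'$, then extension of the induced $4$-cycle $u_1xv_1a'$ by $u_2$, $w_2$, etc.), several subcases are not closed by exhibiting a forbidden subgraph at all. Instead the paper repeatedly invokes the third hypothesis $y\notin m^3(u,v)$ in an active way, constructing an induced $u,v$-path of length at least three through $y$ (for instance to show $u_2\neq u$ and $u_3\neq u$) and obtaining a contradiction there. Your sketch uses $y\notin m^3(u,v)$ only to conclude $y\notin V(P)$, which is not enough to terminate the case analysis. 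So the proposal, as written, would stall both at the missing $yw\in E(G)$ case and inside the tight-case bookkeeping it correctly identifies as the main work.
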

\begin{proof}
Suppose that $m^3$-transit function does not satisfy $(Ch)$ on the weak bipolarizable graphs. That is $x\in m^3(u,v)$, $y\in m^3(x,w)$,  $y\notin m^3(u,v)$, $y\notin m^3(u,w)$ and $y\notin m^3(v,w)$. Let $P= u_nu_{n-1}\dots$ $u_1xv_1\dots v_m$, $u=u_n$ and $v=v_m$, is an induced $u,v$-path of length at least three containing $x$ and $Q=x_px_{p-1}\dots x_1yw_1\dots w_q$, $x_p=x$ and $w_q=w$ be an induced $x,w$-path of length at least three containing $y$. Since $y\notin m^3(u,v)$, $y$ is not on $P$. 
     
First assume that $yw\in E(G)$, which means that $yx\notin E(G)$. Condition $y\notin m^3(u,w)$ implies that $u$ is adjacent to $y$ or to $w$ and  from $y\notin m^3(v,w)$ it  follows that $v$ is adjacent to $y$ or $w$. If both $u$ and $v$ are adjacent to the same vertex, let it be $y$ (the other case is symmetric), then $P$ together with the edges $uy$ and $vy$ form a cycle $C$ of length at least five. The only possible chords in $C$ are between $y$ and vertices of $P$ different than $x$. If $u_1y\notin E(G)$, then we have a hole on $u_2,u_1,x,v_1$ and $y$ possibly with some other vertices of $P$. Similarly, we obtain a hole when or $v_1y\notin E(G)$. Let now $u_1y,v_1y\in E(G)$, which yields a four-cycle $u_1xv_1yu_1$. If $u_2y\in E(G)$ (resp. $v_2y\in E(G)$), then we have a house on $u_2,u_1,x,v_1,y$ (resp. $v_2,v_1,x,u_1,y$). So, let $u_2y,v_2y\notin E(G)$. If $n\geq 4$ (resp. $m\geq 4$), then we have an induced $A$ on $u_2,u_1,x,v_1,y,u_4$ (resp. $v_2,v_1,x,u_1,y,v_4$) and we may assume that $n,m\leq 3$. Also $n\neq 2$ and $m\neq 2$, because $u_2y,v_2y\notin E(G)$. So, $n,m\in\{1,3\}$ and at least one of them equals three since $P$ is of length at least three. Finally, $n=3$ (resp. $m=3$) gives an induced domino on $u_2,u_1,x,v_1,y,u_3$ (resp. $v_2,v_1,x,u_1,y,v_3$). Now consider the case when $u$ is adjacent to $y$ and $v$ is adjacent to $w$ (or vise versa), which means also that $uw,vy\notin E(G)$. As before we have a hole or an induced four cycle $C=u_1xv_1yu_1$. In addition, edge $u_2y$ yields a house, $u_2y\notin E(G)$ and $u_3y\in E(G)$ induces a domino and $u_2y,u_3y\notin E(G)$ gives $A$ when $n\geq 4$. So, let $n=1$, which means that $m\geq 2$ and $u_1w\notin E(G)$. But now we have a house on $u_1,x,v_1,w,y$ when $v_1w\in E(G)$ or on $u_1,x,v_1,v_2,y$ when $v_2y\in E(G)$. Otherwise, $v_1w,v_2y\notin E(G)$ and $u_1,x,v_1,v_2,y,w$ induce a domino if $v_2w\in E(G)$ or $A$-graph when $v_2w\in E(G)$.
     
Now assume that $yw\notin E(G)$. Conditions $y\notin m^3(u,w)$   and  $y\notin m^3(v,w)$ implies that there is a chord from $u,x$-subpath of $P$ to $y,w$-subpath of $Q$ and a chord from $v,x$-subpath of $P$ to $y,w$-subpath of $Q$. Let $a$ be the vertex closest to $x$ in the $u,x$-subpath of $P$, which has a neighbor $a'$ closest to $y$ in the $y,w$-subpath of $Q$. Similarly, let $b$ be the vertex closest to $x$ in the $v,x$-subpath of $P$, which has a neighbor $b'$ closest to $y$ in the $y,w$-subpath of $Q$. Clearly, both $a'$ and $b'$ are different from $y$. If $a\neq u_1$, then $a\xrightarrow{P}x\xrightarrow{Q}a'a$ is a hole. So, we may assume that $a=u_1$ and by symmetric argument also  $b=v_1$. Let first $a'\neq b'$ and we may assume that $b'$ is closer to $y$ than $a'$. Again $ax\xrightarrow{Q}a'a$ is a hole. Hence, let $a'=b'$. Furthermore, if one of $xy$ and $ya'$ is not an edge, we again have a hole $ax\xrightarrow{Q}a'a$. So, $Q=xya'w_2\dots w_q$ where $w_q=w$ and $q\geq 2$. At least one of $n$ and $m$ is at least two, say $n\geq 2$, since $P$ is of length at least three. Notice that $C=axba'a$ is an induced cycle. If $u_2a'\in E(G)$, then we have an induced house on $C$ together with $u_2$. Hence, $u_2a'\notin E(G)$. Further, $aw_2\notin E(G)$ yields an induced $A$ on $C$ together with $u_2$ and $w_2$ when also $u_2w_2\notin E(G)$ and an induced domino on the same vertices when $u_2w_2\in E(G)$. So, let $aw_2\in E(G)$. If $bw_2\notin E(G)$, then we have an induced house on $C$ together with $w_2$. Thus, we may assume that $bw_2\in E(G)$. But now $b,x,y,a',w_2$ induce a house if $by\notin E(G)$ and we may assume that $by\in E(G)$. Further, $b,x,y,a,w_2$ induce a house if $ay\notin E(G)$ and we may assume that $ay\in E(G)$. If $u_2y\in E(G)$, then $b,y,a,w_2,u_2$ induce a house. So, $u_2y\notin E(G)$ and $u_2\neq u$, because otherwise there exists an induce $u,v$-path of length at least three that starts $u_2ay$ and continues with the last neighbor of $y$ on $x,v$-subpath of $P$, a contradiction with $y\notin m^3(u,v)$. If $u_3y\in E(G)$, then we have an induced house on $u_3,u_2,a,y,x$. So, $u_3y\notin E(G)$ and $u_3\neq u$, because otherwise there exists an induce $u,v$-path of length at least three, similar as before. Now, first edge $u_iy$ for $i\geq 4$ induce a hole on $y,a,u_2,\dots,u_i$. If such an edge does not exists, then we have an induced $u,v$-path of length at least three that contains $y$, a contradiction because $y\notin m^3(u,v)$. In all cases we have a contradiction to the assumption that $G$ is a weak bipolarizable graph and the proof is complete.   
\end{proof}

The following proposition follows from Lemmas~\ref{polb1}, \ref{poljo} and \ref{polCg} and it serves as a characterization of the $m^3$-transit function of weak bipolarizable graphs. However, it is clear from Lemmas~\ref{polb1} and \ref{poljo} that $(b1)$ and $(J0)$ alone are sufficient for characterizing the $m^3$-transit function of weak bipolarizable graphs.

\begin{proposition}
Let $G$ be a graph. The $m^3$-transit function defined on  $V(G)$ satisfies $(b1)$, $(J0)$ and $(Ch)$ if and only if $G$ is a weak bipolarizable graph.
\end{proposition}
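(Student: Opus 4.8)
The plan is to prove both implications by directly assembling the three lemmas already established for the $m^3$-transit function, together with the definition of a weak bipolarizable graph as an $HHDA$-free graph, that is, a graph with no induced house, hole, domino or $A$-graph.

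For the direction that weak bipolarizability implies the three axioms, I would start from the assumption that $G$ is $HHDA$-free. Since such a graph is in particular $(HHD)$-free, Lemma~\ref{polb1} immediately yields that $m^3$ satisfies $(b1)$. Likewise, an $HHDA$-free graph is (hole,$A$)-free, so Lemma~\ref{poljo} gives $(J0)$. Finally, $(Ch)$ follows directly from Lemma~\ref{polCg}, which is stated precisely for $HHDA$-free graphs. Hence all three axioms hold simultaneously.

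For the converse, I would assume that $m^3$ satisfies $(b1)$, $(J0)$ and $(Ch)$ and deduce the forbidden-subgraph structure. The key observation is that only $(b1)$ and $(J0)$ are needed here: by Lemma~\ref{polb1}, the axiom $(b1)$ forces $G$ to be $(HHD)$-free, i.e.\ free of house, hole and domino; by Lemma~\ref{poljo}, the axiom $(J0)$ forces $G$ to be (hole,$A$)-free. Taking the union of the two forbidden families gives exactly the four obstructions house, hole, domino and $A$-graph, so $G$ is $HHDA$-free, which is the definition of a weak bipolarizable graph.

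Since each implication reduces to applying an already-proved lemma, there is no genuine obstacle in the argument; the work has all been absorbed into Lemmas~\ref{polb1}, \ref{poljo} and \ref{polCg}. The only point requiring care is the bookkeeping of the forbidden-subgraph classes: one must note that Lemma~\ref{polCg} provides only the sufficiency of $HHDA$-freeness for $(Ch)$ (it is not stated as a characterization), so in the reverse direction $(Ch)$ cannot be used to extract structural information, and the deduction rests entirely on $(b1)$ and $(J0)$. This is exactly the content of the remark preceding the statement, namely that $(b1)$ and $(J0)$ alone already characterize the $m^3$-transit function of weak bipolarizable graphs, with $(Ch)$ coming along for free on this class.
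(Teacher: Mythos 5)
Your proposal is correct and matches the paper's own argument: the paper likewise derives the proposition directly from Lemmas~\ref{polb1}, \ref{poljo} and \ref{polCg}, using the two characterizations for the converse direction and noting (exactly as you do) that $(b1)$ and $(J0)$ alone suffice there since Lemma~\ref{polCg} is only a one-way implication. Nothing is missing; your bookkeeping of the forbidden families (house, hole, domino from $(b1)$; hole, $A$-graph from $(J0)$) is exactly how the paper assembles $HHDA$-freeness.
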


\subsection{All-paths convexity}

The coarsest path transit function of a graph $G$ is the \emph{all-paths transit function}, represented as $A(u, v) = \{w \in V(G) : w$ lies on some $u, v$-path$\}$, containing all vertices that exist on any $u, v$-path. For any two vertices $u$ and $v$ in a connected graph $G$, it is evident that $I(u,v)\subseteq J(u,v)\subseteq A(u,v)$. The characterization of the all-paths transit function is provided in \cite{chklmu-01}, and it is established that $A$ satisfies $(b1)$ if and only if $G$ is a tree. Our objective is to identify all graphs in which $A$ satisfies $(J0)$ as well as $(Ch)$.

\begin{lemma}
Let $G$ be a graph. The all-paths transit function $A$ satisfies $(J0)$ on $G$ if and only if every component of $G$ is a tree or a two-connected graph. 
\end{lemma}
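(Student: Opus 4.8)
The plan is to prove both directions through the block structure of $G$, relying on the standard consequence of Menger's theorem (the fan lemma for $k=2$): \emph{in a $2$-connected graph $H$ and for any three distinct vertices $a,b,c$ there is an $a$--$b$ path passing through $c$}. The first observation is that the hypotheses of $(J0)$, namely $x\in A(u,y)$ and $y\in A(x,v)$, force $u,v,x,y$ to lie in one component, because $w\in A(p,q)$ presupposes a $p$--$q$ path through $w$ and hence connectivity of $p,q,w$. Thus it suffices to verify $(J0)$ componentwise, and I would treat the two allowed component types separately for sufficiency.

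For sufficiency, suppose a component $C$ is $2$-connected. Since $(J0)$ supplies distinct $u,v,x$, the path-through-a-vertex fact yields a $u$--$v$ path through $x$, so $x\in A(u,v)$ and $(J0)$ holds trivially (indeed $A(u,v)=V(C)$ for all $u\neq v$). If instead $C$ is a tree, then $A(u,v)$ is the unique $u$--$v$ path $P(u,v)$, i.e.\ the geodesic interval $I(u,v)$. Here I would argue with the median $m$ of $u,v,x$ (the unique common vertex of $P(u,v),P(u,x),P(v,x)$), noting that $x\in P(u,v)$ is equivalent to $m=x$. Assuming $x\notin P(u,v)$ gives $m\neq x$; since $x\in A(u,y)$ and $m\in P(u,x)$, the subpath from $x$ to $y$ must leave $x$ on the side away from $m$, placing $y$ in the subtree beyond $x$, whereas $P(x,v)$ leaves $x$ toward $m$ and therefore cannot contain $y$, contradicting $y\in A(x,v)$. (Alternatively one may quote that trees are Ptolemaic together with the earlier characterization of $(J0)$ for $I_G$.)

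For necessity I argue by contraposition. If a component $C$ is neither a tree nor $2$-connected, then $C$ has both a cycle and a cut vertex, so its block-cut tree has at least two blocks. I would pick a $2$-connected (cyclic) block $B$; because there are at least two blocks, every block is incident to a cut node in the block-cut tree, so $B$ contains a cut vertex $c$ of $C$, and as $|B|\geq 3$ I may choose distinct $a,b\in B\setminus\{c\}$. I then take $t$ in a component of $C-c$ different from the one meeting $B\setminus\{c\}$. With $u=c$, $v=t$, $x=a$, $y=b$ (four distinct vertices), $2$-connectivity of $B$ gives a $c$--$b$ path through $a$ and an $a$--$c$ path through $b$, the latter extended by any $c$--$t$ path, so $a\in A(c,b)$ and $b\in A(a,t)$, while $a\notin A(c,t)$; this is precisely a failure of $(J0)$.

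The step I expect to be the main obstacle is the bookkeeping in the necessity direction: one must justify that a cyclic block necessarily contains a cut vertex of the whole component (which follows from the block-cut tree having $\geq 2$ nodes, forcing every block-node to have positive degree) and that the concatenated walks are genuine simple \emph{paths}. The decisive point is the separation claim $a\notin A(c,t)$: any $c$--$t$ path visiting $a\in B\setminus\{c\}$ would have to traverse the cut vertex $c$ twice, which is impossible for a simple path. This is exactly where the failure of $2$-connectivity is exploited, so verifying it carefully is the heart of the argument.
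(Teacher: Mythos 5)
Your proof is correct. Your necessity argument is essentially the paper's own: the paper likewise takes a cut vertex $v$ lying on a cycle, two further cycle vertices $x,y$, and a neighbour $u$ of $v$ in a different component of $G_1-v$, and checks the same three memberships (your labels are a permutation of the paper's, using the symmetry of $A$); your block--cut-tree bookkeeping justifies the existence of such a configuration, a step the paper merely asserts, and you route inside a $2$-connected block via the fan lemma rather than around a literal cycle. The genuine divergence is in the sufficiency direction. The paper argues contrapositively: from a violation $x\in A(u,y)$, $y\in A(x,v)$, $x\notin A(u,v)$ it claims that the $x,v$-path $Q$ must meet the $u,x$-subpath of $P$ in a vertex $w$ that separates $x$ from $u$ (so $G_1$ is not $2$-connected), and that $P\cup Q$ contains a cycle (so $G_1$ is not a tree). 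You instead verify $(J0)$ directly on each admissible component: in a $2$-connected component the fan lemma gives $A(u,v)=V(C)$ for all $u\neq v$, so $(J0)$ holds vacuously, while in a tree $A=I$ and your median argument (or, as you note, the fact that trees are Ptolemaic combined with the $(J0)$-characterization of $I_G$ quoted earlier in the paper) finishes. The trade-off: your route invokes the fan lemma and median machinery, but every step is routine to check; the paper's extraction is shorter and more self-contained, but it hinges on the assertion that the intersection vertex $w$ actually separates $x$ from $u$, which is stated without justification and is exactly the delicate point your per-component verification sidesteps.
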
 
\begin{proof}
Suppose first that there exists a component $G_1$ of $G$ that is not a tree and is not two-connected. So, there exists a cut-vertex $v$ on a cycle $C$. Let $x$ and $y$ be different vertices of $C$ that are different from $v$ and let $u$ be a neighbor of $v$ from a different component of $G_1-v$ than $C$. Now, $x$ belongs to $A(u,y)$, $y$ belongs to $A(x,v)$, but $x$ does not belong to $A(u,v)$. In other words, $A$ does not satisfy $(J0)$. 

Conversely, suppose that $A$ does not satisfy $(J0)$ in a component $G_1$ of $G$. That is, $x\in A(u,y)$, $y\in A(x,v)$ but $x\notin A(u,v)$. That is, there is a $u,y$-path, say $P$, containing $x$ and $x,v$-path, say $Q$, containing $y$, but there is no $u,v$-path containing $x$. This implies that $Q$ passes through a vertex $w$ in the $u,x$-subpath of $P$ and $w$ separates $x$ and $u$. Thus, $w$ is a cut-vertex and $G_1$ is not two-connected. Also, $P\cup Q$ induced at least one cycle and $G_1$ is not a tree. 
\end{proof}

Furthermore, notice that all-paths transit function satisfy $(Ch)$ for every graph. Thus, according to Theorem ~\ref{cg}, we obtain the following.

\begin{theorem}
The all-path convexity of a graph $G$ forms a convex geometry if and only if $G$ is a tree.
\end{theorem}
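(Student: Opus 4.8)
The plan is to read this theorem off directly from Theorem~\ref{cg} once the three relevant axioms for the all-paths transit function $A$ are pinned down. Since we work only with connected graphs and $A$ satisfies $(Ch)$ on every graph, the hypothesis of Theorem~\ref{cg} is automatically met, so that result applies verbatim: $\mathcal{C}_A$ is a convex geometry if and only if $A$ satisfies both $(b1)$ and $(J0)$. Thus the whole statement collapses to intersecting the two graph classes on which $(b1)$ and $(J0)$ respectively hold.

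First I would invoke the characterization from \cite{chklmu-01}, namely that $A$ satisfies $(b1)$ exactly when $G$ is a tree. Then I would apply the lemma immediately preceding this theorem, which says $A$ satisfies $(J0)$ precisely when every component of $G$ is a tree or is two-connected; because $G$ is connected this simply reads "$G$ is a tree or $G$ is two-connected." Combining the two conditions, $\mathcal{C}_A$ is a convex geometry iff ($G$ is a tree) and ($G$ is a tree or $G$ is two-connected). By the absorption law $p\wedge(p\vee q)\equiv p$, this is equivalent to "$G$ is a tree", which gives both directions of the claimed equivalence at once, with no case analysis required.

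The single input I would want to make sure is genuinely justified, rather than merely asserted, is that $A$ satisfies $(Ch)$ on every graph. A clean way to see this is through the block--cut-tree: for connected $G$ the set $A(u,v)$ equals the union of the vertex sets of the blocks lying on the block path joining $u$ and $v$. Given $x\in A(u,v)$ and $y\in A(x,w)$, tracing the corresponding block paths shows that the block containing $y$ must lie on the block path of at least one of the pairs $(u,w)$, $(v,w)$, or $(u,v)$, which is exactly the conclusion of $(Ch)$. I expect this to be the only step requiring a short verification; once it is in place the theorem is a purely logical combination of the cited $(b1)$-result and the $(J0)$-lemma, so the substantive work in fact resides in that preceding lemma and I anticipate no further obstacle.
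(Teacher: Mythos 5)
Your proposal is correct and follows essentially the same route as the paper: it invokes Theorem~\ref{cg} together with the $(b1)$-characterization from \cite{chklmu-01} (namely that $A$ satisfies $(b1)$ if and only if $G$ is a tree) and the preceding lemma on $(J0)$, using the fact that $A$ satisfies $(Ch)$ on every graph. The only difference is that you supply a block--cut-tree argument for the $(Ch)$ claim, which the paper merely asserts; your sketch is sound, since $A(u,v)$ is the union of the blocks on the block path between $u$ and $v$ and the tree structure forces the required containments.
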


\subsection{Toll convexity}\label{tc}

A \emph{toll walk} between two different vertices $w_1$ and $w_k$ of a finite connected graph $G$ are vertices $w_{1},\dots ,w_{k}$ that satisfy the following conditions:
\begin{itemize}
	\item $w_{i}w_{i+1} \in E(G)$ for every $i\in\{1,\dots,k-1\}$,
	\item $ w_1w_{i} \in  E(G)$ if and only if $ i=2$,
	\item $ w_kw_{i} \in  E(G)$ if and only if $ i=k-1$.
\end{itemize}
That is, a toll walk $W$ from $u$ to $v\neq u$ is a walk in which $u$ is adjacent only to the second vertex of $W$ and $v$ is adjacent only to the for-last vertex of $W$. 

In order to characterize the dominating pairs of vertices in interval graphs, Alcon \cite{toll2}  introduced toll walks. Later, Alcon et al. \cite{toll1} defined the toll interval $T(u,v)$ as the set of all vertices that belong to the toll walks between $u$ and $v\neq u$. This gives rise to the toll walk transit function $T:V(G)\times V(G)\rightarrow 2^{V(G)}$ of a graph $G$ defined as $T(u,u)=\{u\}$ or, for $u\neq v$,   
$$ T_{G}\left( u,v\right) =\{x\in V( G ):x \text{ lies on a toll walk between } u \text{ and } v \}.$$  

In \cite{toll1}, Alcon et al. proved that interval graphs are convex geometries for the toll convexity as given below. \emph{Interval graphs} are the intersection graphs of the intervals on the real line. This means that a vertex of an interval graph is represented by one interval on the real line, and two intervals are adjacent whenever they intersect. In \cite{lekbo}, interval graphs are characterized as chordal, AT-free graphs. A set of three vertices in a graph $G$ such that each pair is joined by a path that avoids the neighborhood of the third vertex is known as an \emph{asteroidal triple} in $G$. A graph $G$ is called an \emph{AT-free graph} if it has no asteroidal triples.

\begin{theorem}\cite{toll1}\label{nas}
The toll convexity of a graph $G$ is a convex geometry if and only if $G$ is an interval graph.
\end{theorem}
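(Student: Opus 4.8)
The plan is to handle toll convexity by the same strategy used in this section for the geodesic, monophonic and $m^3$ cases: isolate the graph classes on which the toll transit function $T$ satisfies $(b1)$, $(J0)$ and $(Ch)$, and show that their common refinement is exactly the class of interval graphs, which by Lekkerkerker--Boland are the chordal AT-free graphs. Concretely I would prove three statements: (1) $T$ satisfies $(J0)$ if and only if $G$ is chordal; (2) $T$ satisfies $(b1)$ if and only if $G$ is AT-free; and (3) $T$ satisfies $(Ch)$ on every interval graph. Granting these, the theorem drops out of Theorem~\ref{cg}: if $G$ is interval then $(Ch)$ holds by (3), so $\mathcal{C}_T$ is a convex geometry precisely when $T$ satisfies $(b1)$ and $(J0)$, and both hold by (1)--(2); conversely, a non-interval $G$ is non-chordal or contains an asteroidal triple, so $(J0)$ or $(b1)$ fails, and since $(b1)$ is necessary for any convex geometry (as observed after Theorem~\ref{Cg1}) and $(J0)$ is necessary once $T$ is monotone (Theorem~\ref{Cg1} with Lemma~\ref{Cg implies m}), $\mathcal{C}_T$ cannot be a convex geometry.

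For (1) the forward implication mirrors the monophonic case: an induced cycle $C_k$, $k\ge 4$, carries four consecutive vertices $u,x,y,v$ with $x\in T(u,y)$ and $y\in T(x,v)$ but $x\notin T(u,v)$, exactly the $C_4$ pattern, so non-chordal graphs violate $(J0)$. The converse asks that, in a chordal graph, the toll walks witnessing $x\in T(u,y)$ and $y\in T(x,v)$ can be spliced into one witnessing $x\in T(u,v)$, with chords used to keep the splice a genuine toll walk. For (2) the governing observation is that an asteroidal triple $\{a,b,c\}$ forces $T(a,b)=T(b,c)=T(a,c)=V$: the path joining two tips while avoiding the neighbourhood of the third can be prolonged to detour into the third branch and return, putting every vertex on a toll walk between two tips. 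This at once breaks $(b1)$, since taking $u,v$ to be two tips and $x$ the third tip gives $x\in T(u,v)$ and $v\in T(u,x)$; the converse, AT-free $\Rightarrow (b1)$, is the more delicate half, because the obstruction here is the infinite Lekkerkerker--Boland family rather than a short list such as $HHD$ or $HHDA$.

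The hard part, and the genuine obstacle, is statement (3). Toll walks resist the ``concatenate the two witnessing walks'' argument that worked for $I$ and $J$, because a neighbour of an endpoint may appear only at its designated slot (position $2$ for $u$, position $k-1$ for $v$); a careless concatenation creates a forbidden chord to an endpoint and destroys the toll property. I would instead fix an interval representation, order the vertices by right endpoint, and given $x\in T(u,v)$ and $y\in T(x,w)$ use chordality to pass to the induced structure and AT-freeness to forbid the global detour that defeats $(Ch)$ in the subdivided star $S_{2,2,2}$, and then certify $y\in T(u,w)\cup T(v,w)\cup T(u,v)$ by an explicit toll walk read off from the interval order.

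As a safeguard, should $(Ch)$ (hence, via Lemma~\ref{Cg implies m}, already monotonicity) fail on some interval graph, I would prove the ``if'' direction directly through Theorem~\ref{thm:edeljami}(iii): for a convex $K\subsetneq V$, let $p$ be the vertex of $V\setminus K$ whose interval has the largest right endpoint, and show $K\cup\{p\}$ is convex by arguing that any toll walk between two vertices of $K\cup\{p\}$ which left $K\cup\{p\}$ would force either an induced cycle of length at least four or an asteroidal triple, contradicting that $G$ is interval. This peeling argument recovers the forward direction without the betweenness axioms, while the ``only if'' direction still rests on the explicit $(b1)$-failure produced by an asteroidal triple and the $(J0)$-failure produced by an induced cycle, each of which yields a convex set with no extreme point and so contradicts the convex geometry property.
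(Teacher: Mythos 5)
Your overall strategy (feed $(b1)$, $(J0)$ and $(Ch)$ into Theorem~\ref{cg}) is the one the paper follows, but the two key lemmas you propose are not the ones the paper rests on, and one of them is false. Consider your claim (2), that $T$ satisfies $(b1)$ if and only if $G$ is AT-free, and take $G=C_5$ with vertices $1,2,3,4,5$ in cyclic order. This graph is AT-free (its independence number is $2$), yet $(b1)$ fails: $1,5,4,3$ is a toll walk, so $4\in T(1,3)$, and $1,2,3,4$ is a toll walk, so $3\in T(1,4)$. So $(b1)$ does not characterize AT-freeness. Your claim (1) is also unsupported as argued: for a hole of length at least five the ``consecutive vertices'' pattern need not violate $(J0)$ (already in $C_5$ itself one has $2\in T(1,3)$, $3\in T(2,4)$ \emph{and} $2\in T(1,4)$, so no violation there), and in a host graph toll walks are not confined to an induced subgraph, so restricting attention to the hole is not a legitimate move for $T$. (Your parenthetical claim that an asteroidal triple forces $T(a,b)=V$ also overshoots --- a pendant neighbour of $a$ need not lie in $T(a,b)$ --- although the consequence you actually need, that each vertex of the triple lies in the toll interval of the other two and hence $(b1)$ fails, is correct.) The statement that is actually true, and which the paper quotes as Theorem~\ref{preint} from \cite{lcp}, is the \emph{conjunction}: $T$ satisfies $(b1)$ and $(J0)$ together if and only if $G$ is an interval graph; it does not split into a chordal piece and an AT-free piece. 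The paper combines Theorem~\ref{preint} with Proposition~\ref{toll} (your claim (3), which the paper proves by a chordality/asteroidal-triple analysis of the four witnessing walks) and Theorem~\ref{cg} to recover the ``if'' direction; your plan would be repaired by the same substitution.

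There is a second gap, in your ``only if'' direction. From ``$G$ is not interval, hence $(b1)$ or $(J0)$ fails for $T$'' you conclude that $\mathcal{C}_T$ is not a convex geometry. Failure of $(b1)$ does suffice (this is the unconditional remark after Theorem~\ref{Cg1}), but failure of $(J0)$ precludes a convex geometry only when the transit function is monotone (Theorem~\ref{Cg1}; Example~\ref{not m} exhibits a convex geometry whose transit function violates $(J0)$), and neither you nor the paper establishes monotonicity or $(Ch)$ for $T$ on non-interval graphs --- Proposition~\ref{toll} and Lemma~\ref{Cg implies m} give these only on interval graphs. The gap is not vacuous: on $C_4$ the function $T$ does satisfy $(b1)$, so your route must pass through the $(J0)$-failure, and you would need to verify monotonicity of $T$ on every such graph; your alternative phrase that each failure ``yields a convex set with no extreme point'' is exactly what would have to be proved and is not. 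This is why the paper does not reprove the ``only if'' direction through the axioms at all: Theorem~\ref{nas} is cited from \cite{toll1}, where non-interval graphs are handled by constructing explicit convex sets violating the anti-exchange property (Theorem~\ref{thm:edeljami}), and the axiomatic machinery is used only to show that Theorem~\ref{cg} is consistent with that result.
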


The characterization of the toll walk transit function of the interval graphs is derived in \cite{lcp}, and it is presented below.

\begin{theorem}\cite{lcp}\label{preint}
The toll walk transit function $T$ on a graph $G$ satisfies (b1) and (J0) if and only if $G$ is an interval graph.
\end{theorem}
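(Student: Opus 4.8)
The plan is to prove the two implications separately, using throughout the characterization of interval graphs as exactly the chordal, AT-free graphs (\cite{lekbo}), so that the statement ``$G$ is not an interval graph'' unpacks into ``$G$ contains an induced cycle $C_n$ with $n\ge 4$, or $G$ contains an asteroidal triple.'' This reduces the whole theorem to locating an explicit violation of $(b1)$ or $(J0)$ inside each forbidden structure, and, in the other direction, to verifying the two axioms on a genuine interval representation.

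For the easier implication, suppose $G$ is an interval graph. Axiom $(b1)$ then comes essentially for free: by Theorem~\ref{nas} the toll convexity $\mathcal{C}_T$ is a convex geometry, and, as observed after Theorem~\ref{Cg1}, every transit function generating a convex geometry satisfies $(b1)$ (monotonicity is not needed for that part). To obtain $(J0)$ I would argue directly from a fixed interval representation $w\mapsto I_w=[l_w,r_w]$ with distinct endpoints. The key structural fact is that a toll walk from $u$ to $v$ enters through a single neighbour of $u$ (its second vertex) and leaves through a single neighbour of $v$ (its for-last vertex), with no other walk vertex meeting $I_u$ or $I_v$; ordering the vertices from left to right then yields a usable description of $T(u,v)$ in terms of the interval order. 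Given $x\in T(u,y)$ and $y\in T(x,v)$ with $u,v,x,y$ distinct, I would splice the two toll walks at $x$ and $y$ and use the left-to-right order together with AT-freeness to assemble a single toll walk from $u$ to $v$ passing through $x$, giving $x\in T(u,v)$.

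For the converse I would argue by contraposition, assuming $G$ is not interval and producing a violation. If $G$ has an induced cycle $C_n=c_0c_1\cdots c_{n-1}c_0$ with $n\ge 5$, the cycle can be traversed either way, so the toll interval of a non-adjacent pair swells to include vertices on both arcs: the walk $c_0c_{n-1}\cdots c_2$ shows $c_3\in T(c_0,c_2)$, while $c_0c_1c_2c_3$ shows $c_2\in T(c_0,c_3)$, which violates $(b1)$ (with $u=c_0$, $v=c_2$, $x=c_3$). For $n=4$ the two ``diagonal'' intervals have adjacent endpoints and stay small, so one gets a $(J0)$ violation instead: $c_1\in T(c_0,c_2)$ and $c_2\in T(c_1,c_3)$, yet $c_1\notin T(c_0,c_3)=\{c_0,c_3\}$ because $c_0c_3$ is an edge and, around a hole, the toll interval of an adjacent pair is just that edge. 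If instead $G$ is chordal but contains an asteroidal triple $\{a,b,c\}$, I would use its three connecting paths, each avoiding the closed neighbourhood of the third vertex, to build toll walks realizing betweenness relations among $a$, $b$, $c$ and interior vertices that $(J0)$ (or $(b1)$) forbids; the avoiding property of the paths is precisely what guarantees the unique-entry and unique-exit conditions at the endpoints of a toll walk.

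The hard part will be the asteroidal-triple case of the converse: unlike a hole it is a genuinely global obstruction, and converting the three avoiding paths into bona fide toll walks requires careful bookkeeping so that the endpoints remain adjacent to only their second and for-last vertices. A secondary difficulty is the direct verification of $(J0)$ on interval graphs, since that step needs a workable description of $T(u,v)$ in terms of the interval order rather than merely the abstract convex-geometry property supplied by Theorem~\ref{nas}.
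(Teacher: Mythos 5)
Since the paper only quotes Theorem~\ref{preint} from \cite{lcp} and contains no proof of it, your proposal has to stand on its own. The parts you actually carried out are correct: for an induced $C_n$, $n\ge 5$, the walks $c_0c_{n-1}\cdots c_2$ and $c_0c_1c_2c_3$ are indeed toll walks and give a $(b1)$ violation; for an induced $C_4$ your $(J0)$ violation works, using the fact---which is general, not special to holes---that $T(u,v)=\{u,v\}$ whenever $uv\in E(G)$, because $u$ being adjacent to the last vertex of a toll walk forces that walk to have length one; and deducing $(b1)$ on interval graphs from Theorem~\ref{nas} together with the remark after Theorem~\ref{Cg1} is legitimate. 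However, the two steps you explicitly defer are genuine gaps, not bookkeeping: as written, your forward direction establishes only $(b1)$, and your converse covers only the non-chordal case.

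Both gaps can be closed, and more easily than you anticipate. For the asteroidal triple $\{a,b,c\}$: take a shortest $a,c$-path $P$ in $G-N[b]$ and a shortest $c,b$-path $Q$ in $G-N[a]$; both are then induced in $G$. The concatenation $a\xrightarrow{P}c\xrightarrow{Q}b$ is a toll $a,b$-walk, since $a$ is adjacent only to the second vertex of $P$ (as $P$ is induced) and to no vertex of $Q$ (as $Q$ avoids $N[a]$), and symmetrically for $b$. Hence $c\in T(a,b)$, and by the symmetric construction $b\in T(a,c)$, which violates $(b1)$ outright---no chordality assumption, no appeal to $(J0)$, and none of the ``careful bookkeeping'' you worry about; the whole point is that concatenating two of the three avoiding paths at the third vertex of the triple automatically satisfies the unique-entry and unique-exit conditions. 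For $(J0)$ on interval graphs, rather than developing a description of $T(u,v)$ in the interval order (which you leave entirely open, and which is the hardest analytic step of your plan), you can use machinery already in this paper: Proposition~\ref{toll} shows $T$ satisfies $(Ch)$ on interval graphs, Lemma~\ref{Cg implies m} then gives monotonicity, Theorem~\ref{nas} gives that $\mathcal{C}_T$ is a convex geometry, and Theorem~\ref{Cg1}---whose monotonicity hypothesis is needed precisely for the $(J0)$ conclusion---yields $(J0)$. This route is non-circular, since none of those results relies on Theorem~\ref{preint}.
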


Next, we check the status of $(Ch)$ on interval graphs to show that Theorems \ref{cg} and \ref{nas} coexist.

\begin{proposition}\label{toll}
The toll walk transit function $T$ satisfies $(Ch)$ on interval graphs.
\end{proposition}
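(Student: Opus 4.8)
The plan is to work inside a fixed interval representation of $G$, writing $I_z=[l_z,r_z]$ for the interval of each vertex $z$ and using the induced left-to-right order, so that two vertices are adjacent precisely when their intervals meet. First I would record the elementary facts about toll walks that make the argument tractable: if $uv\in E(G)$ then $T(u,v)=\{u,v\}$ (a toll walk between adjacent vertices has length two), and in a toll walk $w_1=u,\dots,w_k=v$ between non-adjacent $u,v$ the second vertex $w_2$ is the only walk-vertex adjacent to $u$ and the for-last vertex $w_{k-1}$ is the only one adjacent to $v$; hence $w_3,\dots,w_k\notin N(u)$ and $w_1,\dots,w_{k-2}\notin N(v)$, and in particular every internal vertex of the walk lies outside $N[u]\cup N[v]$.

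Next I would pin down a usable description of the toll interval in an interval graph. For non-adjacent $u$ (left) and $v$ (right) a vertex $x$ should lie in $T(u,v)$ exactly when $x$ sits between $u$ and $v$ in the interval order and can \emph{escape} each of $N[u]$ and $N[v]$ toward the other endpoint: a vertex all of whose neighbours lie in $N[u]$ (or all in $N[v]$) is trapped and occurs on no toll walk, whereas a between-vertex with a neighbour reaching beyond $r_u$ and one reaching before $l_v$ can be spliced into a toll walk. I would either derive this directly from the elementary facts above or invoke the characterization of toll intervals in interval graphs from \cite{toll1}. This step reduces membership in each of $T(u,v)$, $T(x,w)$, $T(u,w)$ and $T(v,w)$ to statements about the relative positions of intervals and about which closed neighbourhoods $N[\cdot]$ contain the neighbourhood of a given vertex.

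With this in hand I would establish $(Ch)$ by a case analysis: assume $x\in T(u,v)$ and $y\in T(x,w)$, and locate $w$ and $y$ relative to the pair $u,v$ in the interval order (for instance $w$ left of both, between, or right of both, and similarly for $y$). In each configuration the toll walk realizing $y\in T(x,w)$, combined with the position of $x$ inside $T(u,v)$, should be reroutable into a toll walk from $u$ to $w$, from $v$ to $w$, or from $u$ to $v$ through $y$; which endpoint one keeps is dictated by the side of $y$ on which $u$ and $v$ fall. The sub-cases where one of the relevant pairs induces an edge (so the corresponding toll interval collapses to two points) are treated separately and are short.

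The main obstacle, and the reason this is harder than the geodesic or monophonic cases treated earlier, is that toll walks are genuine walks: they may revisit vertices and need only obey the adjacency restriction at their two ends, so the toll interval is far less rigid than a geodesic or induced-path interval. Consequently the real work lies in the second step---obtaining a clean, position-based description of $T(u,v)$---and then in verifying, throughout the case analysis, that $y$ cannot simultaneously fail to be escapable toward $u$, toward $v$, and along $u,v$. It is precisely the chordal, AT-free structure of interval graphs that forbids such a uniformly trapped vertex, and this is where that hypothesis must be used.
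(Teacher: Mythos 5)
Your plan founders exactly at the step you yourself identify as carrying "the real work": the position-based description of $T(u,v)$ is false as stated, under either of the readings you give ("has a neighbour reaching beyond $r_u$ and one reaching before $l_v$" or "not all neighbours in $N[u]$, not all in $N[v]$"). Take the interval graph with $u=[0,1]$, $c=[0.5,4.5]$, $h=[0.7,20.5]$, $a=[4,7]$, $x=[5,6]$, $v=[20,21]$. Then $N(u)=\{c,h\}$, $N(v)=\{h\}$, $N(x)=\{a,h\}$. The vertex $x$ is strictly between $u$ and $v$ in the interval order, and its neighbour $a$ lies outside both $N[u]$ and $N[v]$ while reaching beyond $r_u$ and before $l_v$, so $x$ is not "trapped" in your sense on either side. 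Yet $x\notin T(u,v)$: since $N(v)=\{h\}$, every toll $u,v$-walk ends $\ldots,h,v$, and since $h\in N(u)$ the toll condition forces $h$ to be the \emph{second} vertex of the walk, so the unique toll $u,v$-walk is $u,h,v$ and $T(u,v)=\{u,h,v\}$. The obstruction is that membership in $T(u,v)$ requires an $x,v$-path avoiding $N[u]$ (except possibly at $x$) \emph{along its entire length}, i.e.\ a connectivity condition in $G-(N[u]\setminus\{x\})$, and no first-step "escape" criterion of the kind you propose captures this. Since your step 3 is driven entirely by this characterization, and that case analysis is itself only asserted ("should be reroutable into a toll walk"), the proposal does not yet constitute a proof. (A smaller slip: it is not true that every internal vertex of a toll walk avoids $N[u]\cup N[v]$; the second and for-last vertices are internal and lie in $N(u)$ and $N(v)$ respectively.)

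For contrast, the paper's proof never needs a positional description of toll intervals. It uses only the certificate form of membership, namely that $x\in T(u,v)$ yields an induced $u,x$-path avoiding neighbours of $v$ (except possibly $x$) and an induced $x,v$-path avoiding neighbours of $u$, together with the corresponding certificates for the three non-memberships in a violation of $(Ch)$; from these it manufactures either an induced cycle of length at least four or an asteroidal triple, contradicting the Lekkerkerker--Boland characterization of interval graphs as chordal AT-free graphs. If you wish to salvage the representation-based route, the lemma you need is roughly "$x\in T(u,v)$ if and only if $x$ and $v$ lie in a common component of $G-(N[u]\setminus\{x\})$ and $x$ and $u$ lie in a common component of $G-(N[v]\setminus\{x\})$ (with the obvious degenerate cases when $xu$ or $xv$ is an edge)", and the ensuing case analysis must track deleted-neighbourhood connectivity rather than single-interval positions; that is considerably more delicate than the bookkeeping your sketch envisions, and it is where the chordal, AT-free structure would actually have to be invoked.
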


\begin{proof}
Suppose that the toll walk transit function $T$ does not satisfy $(Ch)$ on an interval graph $G$. Since $x\in T(u,v)$ and $y\in T(x,w)$, there is an induced $u,x$-path, say $P$, without a neighbor of $v$ (where $x$ is a possible exception), there is an induced $x,v$-path, say $Q$, without a neighbor of $u$ (where $x$ is a possible exception), there is an induced $x,y$-path, say $R$, without a neighbor of $w$ (where $y$ is a possible exception) and there is an induced $y,w$-path, say $S$, without neighbor of $x$ (where $y$ is a possible exception). Also, since $y\notin T(u,v)$ we may assume that $N(v)$ separates $u$ from $y$. If $ux,xv\in E(G)$, then there exists a neighbor $u'$ of $u$ and a neighbor $v'$ of $v$ on  $S$, because $y\notin T(u,w)$ and $y\notin T(v,w)$, respectively. If we choose $u'$ and $v'$ as close as possible on $S$, then $u,x,v,u',v'$ induce a $C_4$ if $u'=v'$ or $C_n, n>4$ if $u' \neq v'$ and $G$ is not chordal. If $ux \notin E(G)$ and $xv \in E(G)$, then $v$ has a neighbor $v'$ on $S$ as before. Moreover, $u$ has a neighbor $u'$ on $S$ or $w$ has a neighbor $w'$ in $P$ because $y\notin T(u,w)$. Let $a$ be the vertex closest to $x$ on $P$ having a neighbor $a'$ on $S$ (notice that $a$ cannot be $x$, while $a'$ can be $w$). If we choose $v'$ and $a'$ close as possible, then vertices of the sequence $a\xrightarrow{P}xvv'\xrightarrow{S}a'$ induces a cycle of length at least four and $G$ is not chordal. We can apply symmetric arguments when $ux\in E(G)$ and $xv\notin E(G)$. Finally, suppose that $ux\notin E(G)$ and $xv \notin E(G)$. If $xy\in E(G)$, then $u\xrightarrow{P}xyx\xrightarrow{Q}v$ is a $u,v$-toll walk containing $y$, that is $y\in T(u,v)$ which is not possible. Thus, $xy\notin E(G)$. As before, let $a$ be the vertex closest to $x$ in $P$ having a neighbor $a'$ on $S$, which exists since $y\notin T(u,w)$. Since $N(v)$ separates $u$ from $y$, $v$ has a neighbor $v'$ on $R$ and a neighbor $v''$ on $S$ and no vertex of the path $v'\xrightarrow{R}y\xrightarrow{S}v''$ is adjacent to $u$. Now, $vv'\xrightarrow{R}y\xrightarrow{S}v''v$ or $a\xrightarrow{P}x\xrightarrow{Q}vv''\xrightarrow{S}a'a$ contains an induced cycle of length at least four or the vertices $u,x,v$ form an asteroidal triple. 
\end{proof}

An example underline the last sentence of the above proof is on Figure 5 of \cite{toll1}. To translate it into our notation mark that $a=u$, $b=v$, $c=w$, $y$ is fine and our $x$ is the middle vertex in the lower line. Graph from this figure is chordal, but $u,v,x$ form an asteroidal triple.    

\subsection{Weak toll convexity}\label{wtc}

Dourado in \cite{wtwf} introduced the concept of \emph{weak toll walk} which is defined as a walk $W: uw_1\dots w_{k-1}v$ between vertices $u$ and $v$, $u\neq v$, such that $u$ is adjacent only to $w_1$ on $W$, which may appear multiple times in the walk, and $v$ is adjacent only to $w_{k-1}$ on $W$, which can also appear multiple times on $W$. In more detail, a weak toll $u,v$-walk, $u\neq v$, in $G$ is a sequence of vertices of the form
$uw_1\dots w_kv$, where the following conditions are satisfied:
\begin{itemize}
	\item $w_{i}w_{i+1} \in E(G)$ for every $i\in\{1,\dots,k-1\}$,
	\item $ uw_{i} \in  E(G)$ implies $w_i = w_1$,
	\item $ ww_{i} \in  E(G)$ implies $w_i = w_k$.
\end{itemize}
The \emph{weak toll interval} between $u$ and $v$, $u\neq v$, in $G$ is  
$$ W_T\left( u,v\right) =\{x\in V( G ):x \text{ lies on a weak toll walk between } u \text{ and } v \}.$$
If we add $W_T(u,u)=\{u\}$ for every $u\in V(G)$, then $W_T$ becomes a transit function.

The proper interval graphs are considered to be the convex geometries with respect to the weak toll convexity, as discussed in \cite{wtwf}. \emph{Proper interval graphs} are interval graphs that have an interval representation in which each interval is of unit length. Also, the proper interval graphs are exactly the claw-free interval graphs.

\begin{theorem}\cite{wtwf}
The weak toll convexity of a graph $G$ is a convex geometry if and only if $G$ is a proper interval graph.
\end{theorem}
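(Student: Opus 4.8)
The plan is to obtain this equivalence inside the axiomatic framework of Theorem~\ref{cg}, in direct analogy with the treatment of the toll convexity in Subsection~\ref{tc}. Since every toll walk is a weak toll walk, one has the inclusions $I(u,v)\subseteq T(u,v)\subseteq W_T(u,v)$ for all $u,v$, and it is immediate that $W_T$ is a transit function. By Theorem~\ref{cg} it then suffices to prove two things: that $W_T$ satisfies $(Ch)$ on proper interval graphs, and that the class of graphs for which $W_T$ satisfies both $(b1)$ and $(J0)$ is exactly the class of proper interval graphs. The equivalence then follows: on a proper interval graph all three axioms hold, so $\mathcal{C}_{W_T}$ is a convex geometry; conversely, if $\mathcal{C}_{W_T}$ is a convex geometry then $(b1)$ and $(J0)$ must hold (as in Theorem~\ref{Cg1}), forcing $G$ to be proper interval.

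First I would verify $(Ch)$ for $W_T$ on proper interval graphs, mirroring Proposition~\ref{toll}. I would exploit that a proper interval graph is a claw-free interval graph and admits a unit-interval representation, equivalently a linear ordering of the vertices that is consecutive for closed neighborhoods. Assuming a violation $x\in W_T(u,v)$, $y\in W_T(x,w)$ with $y\notin W_T(u,v)\cup W_T(u,w)\cup W_T(v,w)$, I would translate the four defining weak toll walks into the unit-interval layout and show that the separating neighborhoods forced by $y\notin W_T(u,w)$ and $y\notin W_T(v,w)$ produce either a chord contradicting chordality or a claw, both impossible. The new difficulty over the toll case is that weak toll walks may revisit the unique neighbors of the endpoints; I would handle this by showing that any such repetition can be contracted within the unit-interval representation without leaving the relevant weak toll interval, reducing the configuration to one already excluded.

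Next I would characterize $(b1)$ and $(J0)$ for $W_T$ through forbidden induced subgraphs, in the spirit of Lemmas~\ref{polb1} and~\ref{poljo} and of Theorem~\ref{preint}. For each axiom I would suppose a violation, reconstruct the witnessing weak toll walks, and extract from them a forbidden configuration; chordality should emerge from the $(J0)$ analysis, while the claw (and the asteroidal-triple obstruction already seen for toll walks in Proposition~\ref{toll}) should emerge from the $(b1)$ analysis. The target is that $(b1)$ and $(J0)$ together are equivalent to $G$ being chordal, asteroidal-triple-free and claw-free, which is precisely the class of proper interval graphs by the interval-graph characterization of \cite{lekbo} together with the claw-free description recalled in Subsection~\ref{wtc}. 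Combining this with the $(Ch)$ verification and Theorem~\ref{cg} completes the argument.

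The hard part will be, as with the toll case, the $(Ch)$ verification: the admissible repetitions in weak toll walks make the extraction of induced cycles and claws more delicate than for ordinary interval graphs, and one must genuinely invoke the unit-length (claw-free) condition rather than mere intervality to rule out the configurations that persist under the toll convexity. By contrast, the forbidden-subgraph bookkeeping in the $(b1)$ and $(J0)$ steps, while lengthy, should be routine once the right extremal choices of endpoints and shortest witnessing subwalks are fixed.
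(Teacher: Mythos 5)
Your converse direction contains a genuine gap. You assert that if $\mathcal{C}_{W_T}$ is a convex geometry then $(b1)$ \emph{and} $(J0)$ must hold ``as in Theorem~\ref{Cg1}'', but Theorem~\ref{Cg1} is stated for \emph{monotone} transit functions, and monotonicity of $W_T$ is nowhere established in your plan. Monotonicity is not a cosmetic hypothesis there: the $(J0)$ half of that proof takes $K=R(u,v)$ as a convex set, which is exactly what monotonicity provides. The paper warns about this precise failure immediately after Theorem~\ref{Cg1}: a convex geometry generated by a non-monotone transit function always forces $(b1)$, but need not satisfy $(J0)$ (Example~\ref{not m}). So from the convex-geometry hypothesis you may legitimately conclude only $(b1)$, hence (by the claw argument) that $G$ is claw-free --- not that $G$ is a proper interval graph. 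You also cannot invoke the ``only if'' part of Theorem~\ref{cg}, since its hypothesis is that $(Ch)$ holds on the graph in question, and you only prove $(Ch)$ on proper interval graphs, which is circular here. The gap can be closed with tools already in the paper: $(b1)$ gives claw-freeness; Proposition~\ref{prop1} then gives $W_T=T$, so the weak toll convexity coincides with the toll convexity; Theorem~\ref{nas} forces $G$ to be an interval graph; and claw-free interval graphs are exactly the proper interval graphs.

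Beyond this, your route is much heavier than the paper's, which treats the whole weak-toll case as a reduction to the toll case: a claw violates $(b1)$ for $W_T$, so $(b1)$ forces claw-freeness; on claw-free graphs $W_T=T$ by Proposition~\ref{prop1}; consequently $(Ch)$ on proper interval graphs is inherited from Proposition~\ref{toll}, and the characterization ``$(b1)$ and $(J0)$ iff proper interval'' (Theorem~\ref{Weak-Toll}) is inherited from Theorem~\ref{preint}. Your proposed from-scratch verification of $(Ch)$ for $W_T$ via unit-interval layouts and ``contraction of repetitions'' is precisely the content that Proposition~\ref{prop1} packages once claw-freeness is in hand, and the same is true of your planned forbidden-subgraph reanalysis of $(b1)$ and $(J0)$ (which would essentially reprove results of \cite{lcj}). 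So even after repairing the converse, you should replace the direct verifications by this reduction, or accept a substantial case analysis that your sketch currently leaves entirely open.
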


The following proposition (Proposition 1 from \cite{lcj}) establishes that the weak toll walk transit function $W_T$ and the toll walk transit function $T$ coincide in claw-free graphs, including proper interval graphs as a particular case. 

\begin{proposition}\label{prop1}\cite{lcj}
 A graph $G$ is a claw-free graph if and only if $T(u,v)=W_T(u,v)$ for every $u,v\in V(G)$.
\end{proposition}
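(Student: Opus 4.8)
The plan is to exploit the fact that the only difference between a toll walk and a weak toll walk is that the latter may revisit the unique neighbour of an endpoint. Since every toll walk is a weak toll walk, we always have $T(u,v)\subseteq W_T(u,v)$, so the entire proposition reduces to controlling the reverse inclusion. The statement carries content only for non-adjacent $u\neq v$ (adjacent or equal pairs being the degenerate edge/point case), and this is exactly where claws can appear, so I would assume $u\neq v$ and $u\not\sim v$ throughout.

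\textbf{Claw-free $\Rightarrow$ equality.} I would fix $x\in W_T(u,v)$ with $x\notin\{u,v\}$ and choose a \emph{shortest} weak toll $u,v$-walk $W: u=z_0,z_1,\dots,z_{m+1}=v$ containing $x$; write $p=z_1$ (so the neighbours of $u$ on $W$ are exactly the occurrences of $p$) and $q=z_m$. The aim is to show minimality forces $p$ and $q$ to occur once each, which is precisely the toll condition, yielding $x\in T(u,v)$. If $p=q$ and $m\geq 2$, then $z_2$ is a neighbour of $p$ different from $p$, hence non-adjacent to both $u$ and $v$, so $\{p;u,v,z_2\}$ is an induced claw, a contradiction; thus $W=u,p,v$ and $x\in\{u,p,v\}\subseteq T(u,v)$. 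If $p\neq q$ and $p$ reoccurs at an interior position $i$, then $z_{i-1}$ and $z_{i+1}$ are neighbours of $p$ distinct from $p$, hence non-adjacent to $u$; claw-freeness applied to $\{p;u,z_{i-1},z_{i+1}\}$ forces the edge $z_{i-1}z_{i+1}$, so deleting $z_i$ produces a strictly shorter weak toll walk still containing $x$, contradicting minimality. The symmetric argument rules out reoccurrences of $q$. Hence $W$ is a toll walk and $x\in T(u,v)$.

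\textbf{Not claw-free $\Rightarrow$ strict inequality somewhere.} Given an induced claw with centre $c$ and leaves $a,b,d$, the walk $a,c,d,c,b$ is a weak toll walk, so $d\in W_T(a,b)$; a difference at $(a,b)$ is exactly the assertion $d\notin T(a,b)$. The subtlety is that an \emph{embedded} claw need not witness a difference at its own leaves: when the leaves are joined by alternative routes, the third leaf may also lie in $T(a,b)$. I would therefore pass to a well-chosen claw. Among all induced claws $\{P;X,Y,Z\}$—for each of which $Z\in W_T(X,Y)$—suppose for contradiction that every one satisfies $Z\in T(X,Y)$, and pick one, $\{c;a,b,d\}$, minimising the length of a toll $a,b$-walk $\tau$ through $d$. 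Such a $\tau$ must avoid $c$ (otherwise $c$, adjacent to both $a$ and $b$, forces $\tau=a,c,b$, which omits $d$). Analysing $\tau$ near $c$—using that $a,d,b\in\tau$ are all adjacent to the avoided vertex $c$—I would extract either a smaller induced claw with a strictly shorter witnessing walk or a forbidden chordless cycle, contradicting minimality; hence some induced claw has its third leaf outside the toll interval of the other two, giving the required strict inequality.

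\textbf{Main obstacle.} The forward implication is clean: its entire weight is the single rerouting step in which claw-freeness turns a revisited endpoint-neighbour into a chord, collapsing a weak toll walk to a toll walk. The genuine difficulty is the backward implication, precisely because an embedded claw can fail at its own leaves. The crux is the extremal argument guaranteeing that \emph{some} induced claw has its third leaf outside the corresponding toll interval, and I expect the delicate point to be the descent on a shortest toll walk through that third leaf—showing the minimum cannot be finite and thereby closing the contradiction.
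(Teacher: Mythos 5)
The paper itself offers no proof of this proposition; it is quoted from \cite{lcj}. So your proposal can only be judged on its own merits, and there it splits sharply in two. The forward half (claw-free $\Rightarrow T=W_T$) is essentially right: $T\subseteq W_T$ is immediate, and on a shortest weak toll walk through $x$ any repetition of the gate $p=z_1$ yields, via claw-freeness applied to $\{p;u,z_{i-1},z_{i+1}\}$, a chord that shortens the walk. The degenerate configurations you skip (an occurrence of $u$ or $v$ in the interior of the walk, $z_2\in\{u,v\}$ in the case $p=q$, or $z_{i-1}=z_{i+1}$, where there is no claw but one deletes two vertices instead of one) are all killed by the same minimality argument, so this half is correct modulo routine repairs. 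One definitional caveat: your reduction to non-adjacent pairs needs the endpoint conditions of a weak toll walk to range over \emph{all} vertices of the walk, the far endpoint included; under the paper's literal bulleted definition (which constrains only interior vertices) a triangle already violates the proposition, so the inclusive reading is the one you must, and implicitly do, adopt.

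The genuine gap is the backward direction, and it sits exactly where you yourself flag the "delicate point" --- which is to say, the proof is not done. You are right that a claw $\{c;a,b,d\}$ need not witness the difference at its own leaves: adding a path $a$--$p$--$q$--$d$ and a path $d$--$r$--$s$--$b$ (with $p,q,r,s$ new) makes $a,p,q,d,r,s,b$ a toll walk, so $d\in T(a,b)$. Worse, joining all three pairs of leaves by internally disjoint paths of length three produces a graph in which \emph{all three} leaf pairs of that claw have their third leaf in the corresponding toll interval; the failure of $T=W_T$ then surfaces only at a different claw (one centred at a leaf $a$, with the difference at a pair such as $(c,x_1)$). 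So the argument genuinely must migrate between claws, and your mechanism for that migration is only a gesture: $c$ does not lie on $\tau$, so "analysing $\tau$ near $c$" is undefined; "a forbidden chordless cycle" is no contradiction here, since no chordality hypothesis is available anywhere; and you give neither a construction of the new claw nor any reason why its witnessing toll walk would be strictly shorter, which is what your induction needs. Note also that toll walks may repeat interior vertices (only neighbours of the two endpoints are constrained), so proving that \emph{no} toll walk through the third leaf exists is a global statement about the graph, not a local one about the claw. As it stands, the harder half of the proposition is a plan with its central step missing, not a proof.
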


Now, if $G$ contains a claw with $u,x,v$ as its pendant vertices, then $x\in W_T(u,v)$, $x\neq v$, and  $v\in W_T(x,u)$, so that $(b1)$ is not satisfied by $W_T$. So, if $W_T$ satisfies $(b1)$, then $G$ is claw-free. Consequently, by Proposition \ref{prop1}, we have $T=W_T$. Now, by Theorem \ref{preint}, if $G$ is claw-free and $W_T$ satisfies $(b1)$ and $(J0)$, then $G$ is a proper interval graph and conversely, if $G$ is a proper interval graph, then $W_T$ satisfies $(b1)$ and $(J0)$. These arguments lead to the following theorem; See, also \cite{lcj}, for the detailed proof of the Theorem~\ref{Weak-Toll}. 

\begin{theorem}\cite{lcj}\label{Weak-Toll}
The weak toll walk transit function $W_T$ of a graph $G$ satisfies $(b1)$ and $(J0)$ if and only if $G$ is a proper interval graph.
\end{theorem}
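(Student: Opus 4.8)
The plan is to leverage the fact that the theorem is essentially a corollary of results already assembled in the excerpt, so the proof is a short logical chaining rather than a graph-theoretic grind. I would split the statement into its two directions and handle each using the structural facts about claw-free graphs and the coincidence $T=W_T$.

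First I would prove the forward direction: assume $W_T$ satisfies $(b1)$ and $(J0)$ on $G$. The key first step is to show $G$ is claw-free. The argument is exactly the observation made just before the theorem statement: if $G$ contained an induced claw with center $c$ and pendant vertices $u,x,v$, then $x$ lies on the weak toll walk $uxv$ (since $u$ is adjacent only to $c$ and $x$ among the relevant vertices, and similarly for $v$), giving $x\in W_T(u,v)$ with $x\neq v$; symmetrically $v\in W_T(x,u)$, which violates $(b1)$. Hence $(b1)$ forces $G$ to be claw-free. Once claw-freeness is in hand, Proposition~\ref{prop1} gives $T(u,v)=W_T(u,v)$ for all $u,v$, so $T$ inherits $(b1)$ and $(J0)$ from $W_T$. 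Then Theorem~\ref{preint} immediately yields that $G$ is an interval graph; combined with claw-freeness, $G$ is a claw-free interval graph, which is precisely a proper interval graph by the characterization recalled in the subsection.

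For the converse I would assume $G$ is a proper interval graph. Since proper interval graphs are claw-free, Proposition~\ref{prop1} again gives $T=W_T$, so it suffices to show $T$ satisfies $(b1)$ and $(J0)$. But a proper interval graph is in particular an interval graph, so Theorem~\ref{preint} guarantees that $T$ satisfies both $(b1)$ and $(J0)$, and by the identification $T=W_T$ these axioms hold for $W_T$ as well. This closes the equivalence.

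I do not expect a genuine obstacle here, since all the heavy lifting has already been done in Proposition~\ref{prop1} and Theorem~\ref{preint}; the only subtlety worth stating carefully is the first step, namely verifying that a claw really produces a $(b1)$-violating configuration for $W_T$. One must check that the candidate walks $uxv$ and, for the reverse, a walk realizing $v\in W_T(x,u)$, genuinely satisfy the weak toll conditions inside the claw (that the endpoints are adjacent only to their required neighbors along the walk), so that the claimed memberships in the weak toll intervals are valid. After that, the proof is purely a matter of invoking the quoted results in the right order, which is why the statement can be attributed to \cite{lcj} with only this streamlined justification.
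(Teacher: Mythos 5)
Your overall strategy is exactly the paper's: extract claw-freeness from $(b1)$, invoke Proposition~\ref{prop1} to get $T=W_T$, then apply Theorem~\ref{preint} and the identification of proper interval graphs with claw-free interval graphs, in both directions. That chaining is correct and complete as logic.

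However, the one step you yourself flag as the only subtlety --- that an induced claw forces a $(b1)$ violation for $W_T$ --- is verified incorrectly, and as written it fails. In a claw with center $c$ and pendant vertices $u,x,v$, the pendant vertices are pairwise \emph{non-adjacent}, so $uxv$ is not a walk at all ($ux\notin E(G)$ and $xv\notin E(G)$), and your parenthetical claim that ``$u$ is adjacent only to $c$ and $x$'' is false: $u$ is adjacent only to $c$. The correct witness for $x\in W_T(u,v)$ is the walk $u,c,x,c,v$, and symmetrically $x,c,v,c,u$ witnesses $v\in W_T(x,u)$. These are legitimate weak toll walks precisely because the definition of a weak toll walk permits a vertex (here the center $c$) to appear multiple times, with the endpoint conditions read on vertices rather than positions; this repetition is exactly the feature that separates $W_T$ from $T$ (indeed $u,c,x,c,v$ is \emph{not} a toll walk, since $u$ is adjacent to the vertex in position $4$, which is why $x\notin T(u,v)$ in the claw and why Proposition~\ref{prop1} characterizes claw-free graphs). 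Once you replace $uxv$ by $u,c,x,c,v$ and check the weak toll conditions, the rest of your argument goes through and coincides with the paper's proof.
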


Also, since $T$ satisfies $(Ch)$ on interval graphs by Proposition \ref{toll}, Proposition \ref{prop1} implies that $W_T$ satisfies $(Ch)$ on proper interval graphs.

\subsection{$P_3$-convexity}

A path of length two in a graph $G$ (not necessarily induced) is called a $P_3$-path. A $P_3$-transit function is defined  by $P_3: V\times V \longrightarrow 2^{V}$ where
$$P_3(u,v)=\{w\in V(G): w \text{ lies on some }u,v\text{-}P_3\text{-path  in }G \}$$ 
In \cite{wtwf1},  it was proved that the forest of stars are the convex geometry with respect to $P_3$-convexity.  

\begin{theorem}\cite{wtwf1}\label{P3conv}
The $P_3$-convexity of a graph $G$ is a convex geometry if and only if $G$ is a forest of stars.
\end{theorem}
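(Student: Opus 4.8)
The plan is to combine the transit-function machinery of Section~\ref{convex-transit} for the ``forest of stars $\Rightarrow$ convex geometry'' direction with a direct combinatorial argument for the converse. First I would record the explicit form of the transit sets: for $u\neq v$ one has $P_3(u,v)=\{u,v\}\cup(N(u)\cap N(v))$, since a $u,v$-$P_3$-path is a path $u\text{-}w\text{-}v$ of length two. Consequently a set $S$ is $P_3$-convex exactly when every common neighbour of two vertices of $S$ again lies in $S$. I would also note that a \emph{forest of stars} is precisely a forest in which no component contains a path on four vertices, i.e.\ a graph that is both $\{C_n:n\ge 3\}$-free and $P_4$-free.

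For the direction that a forest of stars yields a convex geometry, I would verify $(b1)$, $(J0)$ and $(Ch)$ and invoke Theorem~\ref{Cg2}. Since stars are triangle-free, $(b1)$ holds: the only nontrivial instance $x\in P_3(u,v)$ with $x\neq v$ has $x$ a star-centre and $u,v$ two of its leaves, and then $P_3(u,x)=\{u,x\}$ because the leaf $u$ has no neighbour other than $x$, so $v\notin P_3(u,x)$. Axiom $(J0)$ is vacuous, as its hypothesis forces edges $ux,xy,yv$, i.e.\ a path of length three, which no forest of stars admits. For $(Ch)$ the only interesting case is $x$ a centre with $u,v$ among its leaves; any genuine $y\in P_3(x,w)$ would be a neighbour of the centre $x$, hence a leaf, and so could not also be adjacent to $w\neq x$. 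Thus $y\in\{x,w\}$, and both possibilities satisfy the conclusion. With all three axioms in hand, Theorem~\ref{Cg2} shows $\mathcal{C}_{P_3}$ is a convex geometry.

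For the converse I would argue directly, because the framework alone does not suffice: $(Ch)$ can fail on graphs that are not forests of stars (for instance a double star whose centre has at least two leaves), so Theorem~\ref{cg} does not apply to them. The key observation is the extreme-point description: for a convex set $K$, a vertex $k\in K$ is extreme iff $K\setminus\{k\}$ is convex iff $k$ is the common neighbour of no two vertices of $K$, that is, iff $|N(k)\cap K|\le 1$. In a convex geometry every nonempty convex set has an extreme point and deleting it preserves convexity, so one may peel $V$ down to $\emptyset$ by repeatedly removing a vertex of degree at most one in the current induced subgraph. A cycle obstructs this: the first cycle-vertex to be removed still has both of its cycle-neighbours present, hence degree at least two, a contradiction. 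Therefore $\mathcal{C}_{P_3}$ being a convex geometry forces $G$ to be a forest.

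It then remains to exclude a $P_4$ in this forest. Suppose $u\text{-}x\text{-}y\text{-}v$ is a path on four vertices; since $G$ is a forest, $u$ and $v$ have no common neighbour (one would close a cycle together with $u\text{-}x\text{-}y\text{-}v$), so $P_3(u,v)=\{u,v\}$ and $K=\{u,v\}$ is convex with $x,y\notin K$. Now $y\in P_3(x,v)\subseteq\langle K\cup x\rangle$ and $x\in P_3(u,y)\subseteq\langle K\cup y\rangle$, which contradicts the anti-exchange axiom, so by Theorem~\ref{thm:edeljami} $\mathcal{C}_{P_3}$ is not a convex geometry. Hence a convex geometry forces $G$ to be a forest with no $P_4$, i.e.\ a forest of stars, completing the equivalence. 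The hard part will be precisely this converse: recognising that the $(Ch)$-based characterisation of Theorem~\ref{cg} is unavailable here and replacing it by the extreme-point/peeling argument to force a forest, together with the forest fact that makes $\{u,v\}$ convex and thereby produces the anti-exchange violation.
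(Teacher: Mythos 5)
Your proof is correct, but it follows a genuinely different route from the paper for the harder direction, so a comparison is in order. The paper does not prove this theorem from scratch: it quotes it from \cite{wtwf1}, and its own contribution is the remark following Lemma~\ref{stCg} that the theorem ``also follows from Theorem~\ref{cg}'', by combining Lemma~\ref{stb1} ($(b1)$ iff triangle-free), Lemma~\ref{stjo} ($(J0)$ iff $(P_4,C_4,K_4^-,3$-pan$)$-free) and Lemma~\ref{stCg} ($(Ch)$ iff ${\cal A}$-free). Your forward direction is the same framework argument in substance --- verify $(b1)$, $(J0)$, $(Ch)$ and invoke Theorem~\ref{Cg2} --- except that you check the axioms directly on stars rather than via the forbidden-subgraph lemmas, which is simpler and equally valid. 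For the converse, however, you correctly identify a point the paper glosses over: Theorem~\ref{cg} is conditional on $(Ch)$, and since $P_3$ is not monotone, a convex geometry only forces $(b1)$ (hence triangle-freeness) via the remark after Theorem~\ref{Cg1}; it does not force $(Ch)$ or $(J0)$. The triangle-free members of ${\cal A}$ (the fork $F$, the $4$-pan $P$, and $K_{2,3}$) are exactly the graphs on which the framework is silent, so the paper's one-sentence derivation of the converse is incomplete and in effect leans on the citation to \cite{wtwf1}. Your replacement --- the extreme-point characterization $|N(k)\cap K|\le 1$, the peeling argument showing that a convex geometry forces every subgraph ordering to dismantle through vertices of degree at most one (hence no cycles), and then the explicit anti-exchange violation $y\in\langle K\cup x\rangle$, $x\in\langle K\cup y\rangle$ with $K=\{u,v\}$ on a $P_4$ inside a forest --- is self-contained, uses only Theorem~\ref{thm:edeljami}, and actually closes this gap. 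What the paper's route buys is reuse of its general machinery and the forbidden-subgraph characterizations (which are of independent interest); what your route buys is a complete, elementary proof of both directions within the paper's own axiomatic setting.
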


\begin{lemma}\label{stb1}
Let $G$ be a graph. The $P_3$-transit function on $G$ satisfies $(b1)$ if and only if $G$ is triangle-free.
\end{lemma}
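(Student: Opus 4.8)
The plan is to prove both directions by directly unpacking the definition of the $P_3$-transit function, with no heavy machinery required. The crucial first observation I would record is that for distinct $u,v$ the set $P_3(u,v)$ consists exactly of $u$, $v$, and the common neighbours of $u$ and $v$: a $u,v$-path of length two has the form $u\,w\,v$, so its interior vertex $w$ is precisely a vertex adjacent to both $u$ and $v$. This reduces the whole statement to a small case analysis on adjacencies among three vertices.

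For the forward direction I would argue by contrapositive: assume $G$ contains a triangle on vertices $u,v,x$ and exhibit a failure of $(b1)$. Since $ux,vx\in E(G)$, the vertex $x$ is a common neighbour of $u$ and $v$, hence $x\in P_3(u,v)$ with $x\neq v$. Since $uv,vx\in E(G)$, the vertex $v$ is a common neighbour of $u$ and $x$, hence $v\in P_3(u,x)$. Thus $x\in P_3(u,v)$, $x\neq v$, and yet $v\in P_3(u,x)$, which directly contradicts $(b1)$.

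For the converse I would assume $G$ is triangle-free and verify $(b1)$ by hand. Take $x\in P_3(u,v)$ with $x\neq v$; note first that $u\neq v$, since otherwise $P_3(u,v)=\{u\}$ by $(t3)$ would force $x=v$. If $x=u$, then $P_3(u,x)=P_3(u,u)=\{u\}$ while $v\neq u$, so $v\notin P_3(u,x)$ and $(b1)$ holds in this case. If $x\neq u$, then by the opening observation $x$ is a common neighbour of $u$ and $v$, so $ux,vx\in E(G)$. Suppose for contradiction that $v\in P_3(u,x)$; as $v\neq u$ and $v\neq x$, this forces $v$ to be a common neighbour of $u$ and $x$, giving $uv\in E(G)$. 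Together with $ux,vx\in E(G)$ this yields a triangle on $u,v,x$, contradicting triangle-freeness. Hence $v\notin P_3(u,x)$, completing the verification of $(b1)$.

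As for difficulty, there is no genuine obstacle here: the argument is essentially a mechanical translation of $(b1)$ through the middle-vertex description of $P_3(u,v)$. The only point that needs a little care is isolating the degenerate endpoint cases $x=u$ (and implicitly $u=v$) before the common-neighbour description is applied, since there one must instead appeal to the transit axioms $(t1)$ and $(t3)$ rather than to the structure of length-two paths.
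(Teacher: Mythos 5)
Your proof is correct and takes essentially the same route as the paper's: both rest on the observation that a vertex of $P_3(u,v)$ other than $u$ and $v$ is precisely a common neighbour of $u$ and $v$, with the forward direction exhibiting the failure of $(b1)$ on a triangle and the converse extracting the edges $ux$, $xv$, $uv$ from a violation of $(b1)$. If anything, your version is slightly more careful, since you explicitly dispose of the degenerate cases $x=u$ and $u=v$, which the paper's terse argument leaves implicit.
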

\begin{proof}
If $G$ contains a triangle with vertices $u,v,x$, then $x\in P_3(u,v)$, $v\neq x$ and $v\in P_3(u,x)$ so that $P_3$-transit function does not satisfy $(b1)$. Conversely, suppose that $P_3$-transit function does not satisfies $(b1)$. That is $x\in P_3(u,v)$, $v\neq x$ and $v\in P_3(u,x)$. Now $x\in P_3(u,v)$ and $v\in P_3(u,x)$ implies that $ux,xv,uv\in E(G)$ and $u,v,x$ induce a triangle. 
\end{proof}

Next, we characterize the graphs in which the $P_3$-transit function satisfies $(J0)$. The graphs are precisely $(P_4,C_4,K_4^-,3$-pan$)$-free, where $K_4^-$ is a complete graph on four vertices without one edge and $3$-pan is a triangle with an additional pendant edge.

\begin{lemma}\label{stjo}
Let $G$ be a graph. The $P_3$-transit function defined on $V(G)$ satisfies  $(J0)$ if and only if $G$ is $(P_4,C_4,K_4^-,3$-pan$)$-free.
\end{lemma}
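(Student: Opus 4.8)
The plan is to first translate $(J0)$ into a purely edge-theoretic condition on $G$. Since $(J0)$ quantifies over \emph{distinct} $u,x,y,v$, the hypothesis $x\in P_3(u,y)$ forces $x$ to be a common neighbour of $u$ and $y$, i.e.\ $ux,xy\in E(G)$; likewise $y\in P_3(x,v)$ forces $xy,yv\in E(G)$; and the conclusion $x\in P_3(u,v)$ amounts to $ux,xv\in E(G)$, which, given $ux\in E(G)$, is just $xv\in E(G)$. Hence $(J0)$ fails for $P_3$ exactly when there exist distinct vertices $u,x,y,v$ with $ux,xy,yv\in E(G)$ but $xv\notin E(G)$. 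The whole lemma therefore reduces to showing that such a ``bad quadruple'' exists precisely when $G$ contains one of $P_4$, $C_4$, $K_4^-$, or the $3$-pan as an induced subgraph.

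For the direction that each forbidden subgraph destroys $(J0)$, I would simply exhibit a bad quadruple inside each induced copy. An induced $P_4$ with edges $ux,xy,yv$ is itself a bad quadruple, and an induced $C_4$ with edges $ux,xy,yv,vu$ is another (with $xv$ the absent diagonal). For an induced $K_4^-$, take $x,v$ to be the unique non-adjacent pair and $u,y$ the remaining two vertices, so that $ux,xy,yv\in E(G)$ while $xv\notin E(G)$. For an induced $3\text{-pan}$ with triangle on $a,b,c$ and pendant $d$ attached to $a$, the quadruple $u=b,\ x=c,\ y=a,\ v=d$ works, since $bc,ca,ad\in E(G)$ but $cd\notin E(G)$. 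This proves the contrapositive of one implication: if $(J0)$ holds, then $G$ is $(P_4,C_4,K_4^-,3\text{-pan})$-free.

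The converse is where a single clean observation does the work: given a bad quadruple $u,x,y,v$ (so $ux,xy,yv\in E(G)$ and $xv\notin E(G)$), the induced subgraph on $\{u,x,y,v\}$ is fully determined once we know the status of the only two undetermined pairs, namely $uy$ and $uv$. I would case-split on these: both absent yields an induced $P_4$; $uy\in E(G)$ and $uv\notin E(G)$ yields the triangle $uxy$ with pendant $v$ at $y$, i.e.\ a $3\text{-pan}$; $uy\notin E(G)$ and $uv\in E(G)$ yields the $4$-cycle with edges $ux,xy,yv,vu$, i.e.\ $C_4$; and both present yields $K_4$ minus the edge $xv$, i.e.\ $K_4^-$. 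Thus any failure of $(J0)$ forces one of the four forbidden induced subgraphs, completing the equivalence. The proof is essentially routine; the only care needed is the bookkeeping that confirms each subgraph is genuinely \emph{induced} (all six pairs are pinned down by the case split) and the choice of the correct ``direction'' through the $3\text{-pan}$, where the pendant vertex must play the role of the endpoint $v$ rather than $u$, since routing through the pendant first would make $xv$ an edge and produce no violation.
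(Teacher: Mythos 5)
Your proof is correct and follows essentially the same route as the paper's: both reduce a failure of $(J0)$ to the existence of distinct $u,x,y,v$ with $ux,xy,yv\in E(G)$ and $xv\notin E(G)$, exhibit such a quadruple inside each of $P_4$, $C_4$, $K_4^-$, and the $3$-pan, and recover one of these four induced subgraphs by the case split on the pairs $uy$ and $uv$. Your explicit up-front reformulation and the remark about orienting the $3$-pan (pendant vertex as $v$) are just cleaner statements of what the paper does implicitly.
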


\begin{proof}
First, assume that $G$ contains either a $P_4$ or a cycle $C_4$ as an induced subgraph with $u,x,y,v$ as its consecutive vertices. In both cases, it holds that $x\in P_3(u, y)$, $y\in P_3(x, v)$ and $x\notin P_3(u, v)$ and $P_3$-transit function does not satisfy $(J0)$. For $K_4^-$ let $u$ and $y$ be vertices of degree three and $x$ and $v$ vertices of degree two. It is easy to check that $(J0)$ does not hold for $u,v,x,y$. Now, assume that $G$ contains a $3$-pan with $u,x,y$ forming the triangle and $v$ being the pendent vertex adjacent to $y$. Also in this case the $P_3$-transit function does not satisfy $(J0)$.
   
Conversely, suppose that $P_3$ does not satisfy $(J0)$ for some vertices $u,x,y,v$. Now, $x\in P_3(u,y)$ implies $ux,xy\in E(G)$ and $y\in P_3(x,v)$ yields $xy,yv\in E(G)$. Moreover, $xv\notin E(G)$ because $x\notin P_3(u,v)$. Vertices $u,x,y,v$ induce $K_4^-$ if $uy,uv\in E(G)$, or $3$-paw if $uy\in E(G)$ and $uv\notin E(G)$, or $C_4$ if $uy\notin E(G)$ and $uv\in E(G)$, or finally $P_4$ when $uy,uv\notin E(G)$. 
\end{proof}
\begin{figure}[ht]
\begin{center}
\begin{tikzpicture}[scale=0.5,style=thick,x=1cm,y=0.7cm]
\def\vr{3pt} 

\path (15,0) coordinate (a);
\path (18,0) coordinate (b);
\path (16.5,3) coordinate (c);
\path (15,3) coordinate (d);
\path (18,3) coordinate (e);

\draw (a) -- (b) -- (c)  -- (a);
\draw (c) -- (e) -- (d);

\draw (a) [fill=white] circle (\vr);
\draw (b) [fill=white] circle (\vr);
\draw (c) [fill=white] circle (\vr);
\draw (d) [fill=white] circle (\vr);
\draw (e) [fill=white] circle (\vr);

\draw[anchor = north] (a) node {$y$};
\draw[anchor = north] (b) node {$w$};
\draw[anchor = south] (e) node {$v$};
\draw[anchor = south] (c) node {$x$};
\draw[anchor = south] (d) node {$u$};
\draw(16.5,1) node {$K_{1,4}^+$};

\path (10,0) coordinate (a1);
\path (13,0) coordinate (b1);
\path (11.5,3) coordinate (c1);
\path (13,3) coordinate (d1);
\path (13,5) coordinate (e1);

\draw (a1) -- (b1)--(c1)--(a1); 
\draw (c1) -- (d1) -- (e1) ;

\draw (a1) [fill=white] circle (\vr);
\draw (b1) [fill=white] circle (\vr);
\draw (c1) [fill=white] circle (\vr);
\draw (d1) [fill=white] circle (\vr);
\draw (e1) [fill=white] circle (\vr);

\draw[anchor = north] (a1) node {$u$};
\draw[anchor = north] (b1) node {$v$};
\draw[anchor = south] (e1) node {$w$};
\draw[anchor = west] (d1) node {$y$};
\draw[anchor = south] (c1) node {$x$};
\draw(11.5,1) node {$\bar{P}$};

\path (0,0) coordinate (a2);
\path (3,0) coordinate (c2);
\path (1.5,0) coordinate (d2);
\path (1.5,3) coordinate (e2);
\path (1.5,6) coordinate (f2);

\draw (a2) -- (d2)--(c2);
\draw (d2) -- (e2) -- (f2);

\draw (a2) [fill=white] circle (\vr);
\draw (c2) [fill=white] circle (\vr);
\draw (d2) [fill=white] circle (\vr);
\draw (e2) [fill=white] circle (\vr);
\draw (f2) [fill=white] circle (\vr);

\draw[anchor = north] (a2) node {$u$};
\draw[anchor = north] (c2) node {$v$};
\draw[anchor = north] (d2) node {$x$};
\draw[anchor = south] (f2) node {$w$};
\draw[anchor = east] (e2) node {$y$};
\draw(0.5,1) node {$F$};


\path (5,0) coordinate (a3);
\path (6.5,0) coordinate (b3);
\path (8,0) coordinate (c3);
\path (6.5,3) coordinate (d3);
\path (6.5,6) coordinate (e3);

\draw  (a3) -- (b3) -- (c3);
\draw (b3) -- (d3) -- (e3)-- (c3) ;

\draw (a3) [fill=white] circle (\vr);
\draw (b3) [fill=white] circle (\vr);
\draw (c3) [fill=white] circle (\vr);
\draw (d3) [fill=white] circle (\vr);
\draw (e3) [fill=white] circle (\vr);

\draw[anchor = north] (a3) node {$u$};
\draw[anchor = north] (b3) node {$x$};
\draw[anchor = north] (c3) node {$v$};
\draw[anchor = east] (d3) node {$y$};
\draw[anchor = south] (e3) node {$w$};
\draw(5.5,1) node {$P$};
\end{tikzpicture}

\begin{tikzpicture}[scale=0.5,style=thick,x=1cm,y=0.7cm]
\def\vr{3pt} 

\path (15,0) coordinate (a);
\path (18,0) coordinate (b);
\path (16.5,3) coordinate (c);
\path (15,6) coordinate (d);
\path (18,6) coordinate (e);

\draw (a) -- (b) -- (c)  -- (a);
\draw (c) -- (e) -- (d) -- (c);

\draw (a) [fill=white] circle (\vr);
\draw (b) [fill=white] circle (\vr);
\draw (c) [fill=white] circle (\vr);
\draw (d) [fill=white] circle (\vr);
\draw (e) [fill=white] circle (\vr);

\draw[anchor = north] (a) node {$y$};
\draw[anchor = north] (b) node {$w$};
\draw[anchor = south] (e) node {$v$};
\draw[anchor = west] (c) node {$x$};
\draw[anchor = south] (d) node {$u$};
\draw(16.5,1) node {$M_{3,3}$};

\path (10,0) coordinate (a1);
\path (13,0) coordinate (b1);
\path (11.5,3) coordinate (c1);
\path (11.5,6) coordinate (d1);
\path (10,6) coordinate (e1);

\draw (a1) -- (b1)--(c1)--(a1); 
\draw (c1) -- (d1) -- (e1) -- (a1);

\draw (a1) [fill=white] circle (\vr);
\draw (b1) [fill=white] circle (\vr);
\draw (c1) [fill=white] circle (\vr);
\draw (d1) [fill=white] circle (\vr);
\draw (e1) [fill=white] circle (\vr);

\draw[anchor = north] (a1) node {$u$};
\draw[anchor = north] (b1) node {$v$};
\draw[anchor = south] (e1) node {$w$};
\draw[anchor = west] (d1) node {$y$};
\draw[anchor = west] (c1) node {$x$};
\draw(11.5,1) node {$H$};

\path (0,0) coordinate (a2);
\path (3,0) coordinate (c2);
\path (1.5,0) coordinate (d2);
\path (1.5,3) coordinate (e2);
\path (1.5,6) coordinate (f2);

\draw (a2) -- (d2)--(c2);
\draw (d2) -- (e2) -- (f2);
\draw (a2) -- (f2) -- (c2);

\draw (a2) [fill=white] circle (\vr);
\draw (c2) [fill=white] circle (\vr);
\draw (d2) [fill=white] circle (\vr);
\draw (e2) [fill=white] circle (\vr);
\draw (f2) [fill=white] circle (\vr);

\draw[anchor = north] (a2) node {$u$};
\draw[anchor = north] (c2) node {$v$};
\draw[anchor = north] (d2) node {$x$};
\draw[anchor = south] (f2) node {$w$};
\draw[anchor = east] (e2) node {$y$};
\draw(0.8,1) node {$K_{2,3}$};
 

\path (5,0) coordinate (a3);
\path (6.5,0) coordinate (b3);
\path (8,0) coordinate (c3);
\path (5.5,3) coordinate (d3);
\path (6.5,6) coordinate (e3);

\draw  (a3) -- (b3) -- (c3);
\draw (b3) -- (d3) -- (e3)-- (c3) ;
\draw  (e3) -- (b3);

\draw (a3) [fill=white] circle (\vr);
\draw (b3) [fill=white] circle (\vr);
\draw (c3) [fill=white] circle (\vr);
\draw (d3) [fill=white] circle (\vr);
\draw (e3) [fill=white] circle (\vr);

\draw[anchor = north] (a3) node {$u$};
\draw[anchor = north] (b3) node {$x$};
\draw[anchor = north] (c3) node {$v$};
\draw[anchor = east] (d3) node {$y$};
\draw[anchor = south] (e3) node {$w$};
\draw(5.5,1) node {$P^+$};
\end{tikzpicture}

\begin{tikzpicture}[scale=0.5,style=thick,x=1cm,y=0.7cm]
\def\vr{3pt} 


\path (15,0) coordinate (a2);
\path (18,0) coordinate (c2);
\path (16.5,1) coordinate (d2);
\path (17,2.5) coordinate (e2);
\path (16.5,6) coordinate (f2);

\draw (a2) -- (d2)--(c2);
\draw (f2) -- (d2) -- (e2) -- (f2);
\draw (c2) -- (a2) -- (f2) -- (c2);

\draw (a2) [fill=white] circle (\vr);
\draw (c2) [fill=white] circle (\vr);
\draw (d2) [fill=white] circle (\vr);
\draw (e2) [fill=white] circle (\vr);
\draw (f2) [fill=white] circle (\vr);

\draw[anchor = north] (a2) node {$u$};
\draw[anchor = north] (c2) node {$v$};
\draw[anchor = north] (d2) node {$x$};
\draw[anchor = south] (f2) node {$w$};
\draw[anchor = north] (e2) node {$y$};

\draw(15.3,5) node {$S_{2,3}^+$};

\path (10,0) coordinate (a1);
\path (13,0) coordinate (b1);
\path (11.5,3) coordinate (c1);
\path (11.5,6) coordinate (d1);
\path (10,6) coordinate (e1);

\draw (a1) -- (b1)--(c1)--(a1); 
\draw (e1) -- (c1) -- (d1) -- (e1) -- (a1);

\draw (a1) [fill=white] circle (\vr);
\draw (b1) [fill=white] circle (\vr);
\draw (c1) [fill=white] circle (\vr);
\draw (d1) [fill=white] circle (\vr);
\draw (e1) [fill=white] circle (\vr);

\draw[anchor = north] (a1) node {$u$};
\draw[anchor = north] (b1) node {$v$};
\draw[anchor = south] (e1) node {$w$};
\draw[anchor = west] (d1) node {$y$};
\draw[anchor = west] (c1) node {$x$};
\draw(11.5,1) node {$F_3$};

\path (0,0) coordinate (a2);
\path (3,0) coordinate (c2);
\path (1.5,0) coordinate (d2);
\path (2,2.5) coordinate (e2);
\path (1.5,6) coordinate (f2);

\draw (a2) -- (d2)--(c2);
\draw (f2) -- (d2) -- (e2) -- (f2);
\draw (a2) -- (f2) -- (c2);

\draw (a2) [fill=white] circle (\vr);
\draw (c2) [fill=white] circle (\vr);
\draw (d2) [fill=white] circle (\vr);
\draw (e2) [fill=white] circle (\vr);
\draw (f2) [fill=white] circle (\vr);

\draw[anchor = north] (a2) node {$u$};
\draw[anchor = north] (c2) node {$v$};
\draw[anchor = north] (d2) node {$x$};
\draw[anchor = south] (f2) node {$w$};
\draw[anchor = north] (e2) node {$y$};

\draw(0.8,1) node {$S_{2,3}$};

\path (5,0) coordinate (a2);
\path (8,0) coordinate (c2);
\path (6.5,1) coordinate (d2);
\path (6.5,3) coordinate (e2);
\path (6.5,6) coordinate (f2);

\draw (a2) -- (d2)--(c2);
\draw (d2) -- (e2) -- (f2);
\draw (c2) -- (a2) -- (f2) -- (c2);

\draw (a2) [fill=white] circle (\vr);
\draw (c2) [fill=white] circle (\vr);
\draw (d2) [fill=white] circle (\vr);
\draw (e2) [fill=white] circle (\vr);
\draw (f2) [fill=white] circle (\vr);

\draw[anchor = north] (a2) node {$u$};
\draw[anchor = north] (c2) node {$v$};
\draw[anchor = north] (d2) node {$x$};
\draw[anchor = south] (f2) node {$w$};
\draw[anchor = east] (e2) node {$y$};
\draw(5.5,5) node {$K_{2,3}^+$};
 
\end{tikzpicture}

\end{center}
\caption{Family ${\cal A}$.} \label{kkp}
\end{figure}

Now we give a forbidden induced subgraph characterization of graphs that fulfill $(Ch)$. For this we define a family of graphs 
$${\cal A}=\{F,P,\bar{P},K_{1,4}^+,K_{2,3},P^+,H,M_{3,3},S_{2,3},K_{2,3}^+,F_3,S_{2,3}^+\},$$ 
which are depicted on Figure \ref{kkp}. We briefly justify their notation (except for well known graphs $K_{2,3}$ and house $H$): $F$ is a fork graph obtained from $K_{1,3}$ by one subdivision of one edge, $P$ is also a $4$-pan graph, $\bar{P}$ is obtained from $3$-pan graph by one subdivision of the pendant edge, $K_{1,4}^+$ is obtain from $K_{1,4}$ by adding one edge between two leaves, $P^+$ is obtained from $P$ by adding the diagonal edge in $C_4$ that starts in a vertex of degree three, $M_{3,3}$ are two three-cycles amalgamated in one vertex ($M$ stands for M\"obious), $S_{2,3}$ is a split graph on $K_2$ and three independent vertices together with all edges in between them, $K_{2,3}^+$ is $K_{2,3}$ together with one edge between two vertices of degree two, $F_3=P_4\vee K_1$ is a $3$-fan (where $\vee$ is a join) and $S_{2,3}^+$ is $S_{2,3}$ together with one edge between two of its independent vertices.  

\begin{lemma}\label{stCg}
The  $P_3$-transit function on a graph $G$ satisfies $(Ch)$ if and only if $G$ is ${\cal A}$-free graphs.
 \end{lemma}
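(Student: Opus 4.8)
The plan is to reduce the axiom $(Ch)$ for $P_3$ to a transparent condition on five vertices, and then enumerate. The first thing I would record is the elementary description of the transit sets: for distinct $u,v$ one has $P_3(u,v)=\{u,v\}\cup(N(u)\cap N(v))$, so a vertex $w\notin\{u,v\}$ lies in $P_3(u,v)$ exactly when $w$ is a common neighbour of $u$ and $v$. Every later step is then just bookkeeping with common neighbours.

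Next I would pin down precisely when $(Ch)$ can fail. Assume $x\in P_3(u,v)$ and $y\in P_3(x,w)$ but $y\notin P_3(u,w)\cup P_3(v,w)\cup P_3(u,v)$. From $y\notin P_3(u,v)$ we get $y\neq u,v$; if $x$ equalled $u$ or $v$, then the hypothesis $y\in P_3(x,w)$ would immediately give $y\in P_3(u,w)$ or $y\in P_3(v,w)$, so $x\notin\{u,v\}$ and hence $xu,xv\in E(G)$. Likewise $y\neq x,w$, so $y$ is a genuine common neighbour of $x$ and $w$, i.e.\ $xy,yw\in E(G)$. Since $yw\in E(G)$, the two conditions $y\notin P_3(u,w)$ and $y\notin P_3(v,w)$ force $yu\notin E(G)$ and $yv\notin E(G)$ (these in turn give $y\notin P_3(u,v)$ for free and show $w\neq u,v$). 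Thus all five vertices are pairwise distinct, and we arrive at the clean equivalence: \emph{$P_3$ violates $(Ch)$ on $G$ if and only if $G$ contains five distinct vertices $u,v,w,x,y$ with $xu,xv,xy,yw\in E(G)$ and $yu,yv\notin E(G)$}. The converse direction is immediate, since any such quintuple violates $(Ch)$ no matter how the remaining pairs are joined.

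Finally I would enumerate. The four edges $xu,xv,xy,yw$ are present and the two pairs $yu,yv$ are absent, so the subgraph induced on $\{u,v,w,x,y\}$ is completely determined by the four remaining pairs $uv,uw,vw,xw$, giving $2^{4}=16$ configurations. I would identify the induced graph in each case; the involution swapping $u$ and $v$ interchanges the $uw$ and $vw$ bits and collapses the $16$ configurations into exactly $12$ isomorphism types, which turn out to be precisely the members of $\mathcal{A}$, realised by the labelings in Figure~\ref{kkp}. For orientation: the empty choice gives the fork $F$; adding only $uv$ gives $\bar P$; adding only $uw$ (or only $vw$) gives the $4$-pan $P$; adding only $xw$ gives $K_{1,4}^+$; and the full choice gives $S_{2,3}^+$; the intermediate selections account for $H,K_{2,3},K_{2,3}^+,M_{3,3},P^+,F_3$ and $S_{2,3}$. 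With this dictionary the equivalence of the previous paragraph reads exactly as: $(Ch)$ fails iff $G$ has an induced subgraph in $\mathcal{A}$, i.e.\ $P_3$ satisfies $(Ch)$ iff $G$ is $\mathcal{A}$-free. The one genuinely delicate point is the distinctness-and-forcing argument of the middle paragraph; recognising each of the sixteen small graphs is routine but must be carried out carefully so that no named graph is missed or double-counted.
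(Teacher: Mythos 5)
Your strategy coincides with the paper's proof: express $P_3(u,v)=\{u,v\}\cup(N(u)\cap N(v))$, reduce a violation of $(Ch)$ to the edge pattern $xu,xv,xy,yw\in E(G)$, $yu,yv\notin E(G)$ on five distinct vertices, and then run through the sixteen choices of the remaining pairs $uv,uw,vw,xw$ modulo the $u\leftrightarrow v$ involution. Your enumeration, as sketched, is even slightly more complete than the paper's printed case list, which omits the configuration $uw,vw\in E(G)$, $xw,uv\notin E(G)$ realising $K_{2,3}$; you do include $K_{2,3}$ among the intermediate selections, and your orbit count ($8+4=12$ isomorphism types) is correct.

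The gap sits exactly at the step you flag as delicate, and it is a circularity. You deduce $yu,yv\notin E(G)$ from $y\notin P_3(u,w)$ and $y\notin P_3(v,w)$, and only afterwards remark that this shows $w\neq u,v$. But the deduction itself already requires $w\neq u$ and $w\neq v$: if $w=u$, then $P_3(u,w)=\{u\}$, so $y\notin P_3(u,w)$ carries no adjacency information, and in fact $yu=yw\in E(G)$. This degenerate case is not empty. Take distinct $u,v,x,y$ with $xu,xv,xy,yu\in E(G)$, $yv\notin E(G)$, and set $w=u$: then $x\in P_3(u,v)$, $y\in P_3(x,w)$, yet $y\notin P_3(u,w)\cup P_3(v,w)\cup P_3(u,v)$, a violation of $(Ch)$ as the paper states it (the quantifier in $(Ch)$ does not require distinct vertices, and the paper itself applies $(Ch)$ with $w=u$ in the proof of Lemma~\ref{Cg implies m}). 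The graph induced on these four vertices is the $3$-pan (if $uv\notin E(G)$) or $K_4^-$ (if $uv\in E(G)$), both of which are $\mathcal{A}$-free, since every member of $\mathcal{A}$ has five vertices; so under the literal reading of $(Ch)$ your ``clean equivalence'' --- and indeed the lemma itself --- is false. The statement is true only under the convention that the five vertices in $(Ch)$ are pairwise distinct (or at least $w\notin\{u,v\}$); with that convention your forcing step is sound, but then $w\neq u,v$ is a hypothesis, not a conclusion. The paper's proof makes the same silent assumption, so your argument is no weaker than the printed one --- but a watertight write-up must either state this distinctness convention explicitly or treat the case $w\in\{u,v\}$ separately and note that it changes the class of obstructions.
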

 
 \begin{proof}
If $G$ contain any  of the graphs from $\cal A$ as an induced subgraph, then $x\in P_3(u,v)$, $y\in P_3(x,w)$, $y\notin P_3(u,w)$, $y\notin P_3(u,v)$ and $y\notin P_3(v,w)$ and $(Ch)$ does not hold if we label the vertices as on Figure~\ref{kkp}. Thus, $P_3$-transit function does not satisfies $(Ch)$ on $G$. 

Conversely, assume that $P_3$-transit function does not satisfy $(Ch)$. That is $x\in P_3(u,v)$, $y\in P_3(x,w)$, $y\notin P_3(u,w)$, $y\notin P_3(u,v)$ and $y\notin P_3(v,w)$. This means that $ux,xv,xy,yw\in E(G)$ and $uy,vy\notin E(G)$. Based on (non)existence of edges $uw,vw,xw,uv$ we obtain the following induced subgraphs on $B=\{u,v,x,y,w\}$.  
\begin{itemize}
\item If $uw,vw,xw,uv\notin E(G)$, then $F$ is an induced subgraph on $B$.
\item If $uw\in E(G)$ and $vw,xw,uv\notin E(G)$ or $vw\in E(G)$ and $uw,xw,uv\notin E(G)$, then $P$ is an induced subgraph on $B$.
\item If $xw\in E(G)$ and $uw,vw,uv\notin E(G)$, then $K_{1,4}^+$ is an induced subgraph on $B$.
\item If $uv\in E(G)$ and $uw,vw,xw\notin E(G)$, then $\bar{P}$ is an induced subgraph on $B$.
\item If $uw,xw\in E(G)$ and $vw,uv\notin E(G)$ or $vw,xw\in E(G)$ and $uw,uv\notin E(G)$, then $P^+$ is an induced subgraph on $B$.
\item If $uw,uv\in E(G)$ and $vw,xw\notin E(G)$ or $vw,uv\in E(G)$ and $uw,xw\notin E(G)$, then $H$ is an induced subgraph on $B$.
\item If $xw,uv\in E(G)$ and $uw,vw\notin E(G)$, then $M_{3,3}$ is an induced subgraph on $B$.
\item If $uw,vw,xw\in E(G)$ and $uv\notin E(G)$, then $S_{2,3}$ is an induced subgraph on $B$.
\item If $uw,vw,uv\in E(G)$ and $xw\notin E(G)$, then $K_{2,3}^+$ is an induced subgraph on $B$.
\item If $uw,xw,uv\in E(G)$ and $vw\notin E(G)$ or $vw,xw,uv\in E(G)$ and $uw\notin E(G)$, then $F_3$ is an induced subgraph on $B$.
\item Finally, if $uw,vw,xw,uv\in E(G)$, then $S_{2,3}^+$ is an induced subgraph on $B$.
 \end{itemize} 
 So, in every case we obtain an induced subgraph from ${\cal A}$ in $G$.
 \end{proof}

 Following Lemmas \ref{stb1} and \ref{stjo} to detect all graphs for which the $P_3$-transit function fulfills both $(b1)$ and $(J0)$, we immediately see that this happens only when there are no cycles and every induced path is of length at most two. However, this yields a forest of stars. Since the forest of stars does not contain any graph from ${\cal A}$ as an induced subgraph, Theorem \ref{P3conv} now also follows from Theorem \ref{cg}. In the case of connected graphs, the following consequence follows directly.
 
\begin{corollary}
Let $G$ be a connected graph. The $P_3$-transit function defined on $V(G)$ satisfies $(Ch)$, $(b1)$ and $(J0)$ if and only if $G$ is a star graph.
\end{corollary}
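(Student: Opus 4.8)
The plan is to read the result directly off the three forbidden-subgraph characterizations established in Lemmas~\ref{stb1}, \ref{stjo} and \ref{stCg}, exploiting the observation that the conjunction of $(b1)$ and $(J0)$ already forces $G$ to be a star, after which $(Ch)$ comes for free. Concretely, for the forward direction I would assume all three axioms hold and invoke Lemma~\ref{stb1} to conclude that $G$ is triangle-free and Lemma~\ref{stjo} to conclude that $G$ is $(P_4,C_4,K_4^-,3$-pan$)$-free. Since both $K_4^-$ and the $3$-pan contain a triangle, triangle-freeness subsumes those two forbidden graphs, so the effective constraints are exactly that $G$ is triangle-free, $C_4$-free and $P_4$-free.

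The central step is to argue that these three constraints force $G$ to be a star. First I would show that $G$ is acyclic: a shortest cycle of length $3$ is a triangle, one of length $4$ is a $C_4$, and any cycle of length at least $5$ contains four consecutive vertices inducing a $P_4$; each possibility is excluded, so $G$ has no cycle, and being connected it is a tree. Next, since a tree of diameter at least three contains a path on four vertices and such a path is automatically chordless in a tree, $P_4$-freeness forces the diameter of $G$ to be at most two. A connected tree of diameter at most two is precisely a star, which finishes the forward implication.

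For the converse I would take $G=K_{1,n}$ and verify the three forbidden-subgraph conditions by inspection. A star has no triangle, so $(b1)$ holds by Lemma~\ref{stb1}; it has no induced $P_4$, $C_4$, $K_4^-$ or $3$-pan, so $(J0)$ holds by Lemma~\ref{stjo}; and since every member of the family $\mathcal{A}$ contains either a cycle or an induced $P_4$ (equivalently, has a triangle or diameter exceeding two), a star is $\mathcal{A}$-free and $(Ch)$ holds by Lemma~\ref{stCg}.

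The only mildly delicate point is the structural argument of the second paragraph, namely checking that triangle-, $C_4$- and $P_4$-freeness jointly rule out every cycle and then pin the resulting tree down to diameter two; everything else reduces to the routine verification that a star avoids each listed obstruction. As the discussion preceding the statement already remarks, $(Ch)$ plays no role in the forward direction, so in fact $(b1)$ together with $(J0)$ suffices to characterize stars among connected graphs, and the inclusion of $(Ch)$ in the hypothesis is merely for uniformity with the earlier theorems.
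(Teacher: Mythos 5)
Your proposal is correct and follows essentially the same route as the paper: both derive the star structure by combining Lemma~\ref{stb1} (triangle-freeness) with Lemma~\ref{stjo} ($P_4$-, $C_4$-, $K_4^-$-, $3$-pan-freeness), and then observe that stars contain no member of $\mathcal{A}$, so $(Ch)$ holds by Lemma~\ref{stCg}. The paper states this tersely while you spell out the acyclicity and diameter arguments, but the underlying argument is identical, including your closing remark that $(b1)$ and $(J0)$ alone already force the star structure.
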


\subsection{Cut-vertex transit function and Convex geometry of connected sets}\label{C-convexity}

In\cite{muld-08}, the cut-vertex transit function of a connected graph $G$ is defined to be the function $C:V\times V\to 2^V$ such that
\begin{equation}\label{c2}
C(u,v)=\{u,v\} \cup \{x\in V : x \;\text{is a cut-vertex between $u$ and $v$}\}.
\end{equation}

Following \cite{edel-jami}, a subset $K$ of the vertices of a graph $G$ is called \emph{connected} if the subgraph induced by $K$ is connected. The collection of connected sets on the vertices is called the \emph{connected alignment}. It is easy to see that connected alignment does not yield a convexity on all graphs. For instance in a four-cycle $v_1v_2v_3v_4v_1$ sets $\{v_1,v_2,v_3\}$ and $\{v_1,v_4,v_3\}$ belong to the connected alignment, but their intersection $\{v_1,v_3\}$ is not. This is not the case for \emph{block graphs}, which are graphs where every maximum component without a cut-vertex is a complete graph. Trees are an examples of block graphs. The connected alignment of $G$ is a convex geometry
if and only if $G$ is a connected block graph, see \cite{edel-jami} Theorem 3.7. Even more, we will show that the connected alignment $\mathcal{C}_G$ of a block graph $G$ coincides with the cut-vertex transit function $C$ of $G$, that is $\mathcal{C}_G=\mathcal{C}_C$.

\begin{lemma}
The connected alignment $\mathcal{C}_G$ of a block graph $G$ is a $C$-convexity, where $C$ is the cut-vertex transit function of $G$.
\end{lemma}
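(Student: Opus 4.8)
The plan is to prove the two inclusions $\mathcal{C}_G\subseteq\mathcal{C}_C$ and $\mathcal{C}_C\subseteq\mathcal{C}_G$ separately; together they give $\mathcal{C}_G=\mathcal{C}_C$, which is exactly the assertion that the connected alignment is the convexity generated by $C$. The inclusion $\mathcal{C}_G\subseteq\mathcal{C}_C$ needs no block structure at all, so I would dispatch it first. Take a connected set $K$ and two vertices $u,v\in K$. Trivially $u,v\in K$, and for any cut-vertex $x$ between $u$ and $v$, removing $x$ separates $u$ from $v$, so $x$ lies on every $u,v$-path in $G$. Since $G[K]$ is connected there is a $u,v$-path inside $K$, and this path must pass through $x$; hence $x\in K$. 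Therefore $C(u,v)\subseteq K$ and $K$ is $C$-convex.

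The reverse inclusion is where the hypothesis that $G$ is a block graph enters, and the main work is an auxiliary structural claim: \emph{for every pair $u,v$ of vertices of a block graph the set $C(u,v)$ induces a connected subgraph, in fact a $u,v$-path.} To establish it I would appeal to the block-cut tree of $G$. The cut-vertices between $u$ and $v$ all lie on every $u,v$-path, so they are linearly ordered, say $x_1,\dots,x_s$ as they occur from $u$ to $v$; they are precisely the cut-vertices met along the block-cut tree path joining the block of $u$ to the block of $v$. Two consecutive vertices of the sequence $u,x_1,\dots,x_s,v$ always lie in a common block, and since every block of a block graph is a clique, consecutive vertices are adjacent. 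Thus $u x_1\cdots x_s v$ is a genuine $u,v$-path whose vertex set is exactly $\{u,v\}\cup\{x_1,\dots,x_s\}=C(u,v)$.

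With the claim in hand, the inclusion $\mathcal{C}_C\subseteq\mathcal{C}_G$ is immediate. Given a $C$-convex set $K$ and any $u,v\in K$ we have $C(u,v)\subseteq K$, and by the claim $C(u,v)$ contains a $u,v$-path lying entirely in $K$; hence $u$ and $v$ belong to the same component of $G[K]$. As this holds for every pair, $G[K]$ is connected and $K\in\mathcal{C}_G$, completing the proof.

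I expect the only real obstacle to be the structural claim, namely verifying carefully that the cut-vertices between $u$ and $v$ are consecutively adjacent; the clean route is through the block-cut tree together with the clique property of blocks. One should double-check the degenerate cases ($u=v$, the case $uv\in E(G)$ with $s=0$, and $u$ or $v$ itself a cut-vertex of $G$) to be sure the path $u x_1\cdots x_s v$ is well defined and that no $x_i$ coincides with $u$ or $v$, since a cut-vertex between $u$ and $v$ can be neither $u$ nor $v$. The remaining steps are routine and require no calculation.
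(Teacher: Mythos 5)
Your proof is correct and follows essentially the same route as the paper: the inclusion $\mathcal{C}_G\subseteq\mathcal{C}_C$ is argued identically, and the reverse inclusion rests on the same key fact that in a block graph the cut-vertices separating $u$ and $v$, together with $u$ and $v$, form a $u,v$-path. The only difference is that you prove this structural claim directly via the block-cut tree and the clique property of blocks, whereas the paper cites it as Proposition~4 of \cite{Changat:21}.
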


\begin{proof} 
Let $C$ be the cut-vertex transit function of $G$.  We prove $\mathcal{C}_G=\mathcal{C}_C$.
Let $U\in \mathcal{C}_G$. Let $u,v\in U$. Since $U$ is connected, $U$ contains a $u,v$-path. Since the cut-vertices that separate $u$ and $v$ belong to every $u,v$-path, $C(u,v)\subseteq U$, i.e. $U$ is a convex set of the $C$-convexity. Hence, $U\in \mathcal{C}_{C}$.
Now, let $U\in \mathcal{C}_{C}$. Let $u,v\in U$. If $u$ and $v$ are in the same block, then $uv$ is an edge. If they are in different blocks, then there exists a unique sequence of blocks between them that pairwise intersect at the cut vertices that separate $u$ and $v$. These cut vertices form the unique shortest $u,v$-path $P$ in $G$, see \cite[Prop.~4]{Changat:21}. Since $C(u,v)\subseteq U$, clearly, $V(P)$ is contained in $U$. Therefore, $U$ is connected and $U\in \mathcal{C}_G$. Thus, $\mathcal{C}_G=\mathcal{C}_{C}$. 
\end{proof}

According to Proposition~4 in \cite{Changat:21}, if $G$ is a block graph, then the cut-vertex transit function coincides with the interval function. Hence, we get the following.

\begin{corollary}\label{Connected_align}
If the connected alignment $\mathcal{C}_G$ of a connected graph $G$ is a convex geometry, then $\mathcal{C}_G=\mathcal{C}_I$ where $I$ is the interval function of $G$ and $G$ is a block graph. 
\end{corollary}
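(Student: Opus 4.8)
The plan is to obtain the corollary as a direct chaining of three already-established facts, with essentially no new argument required beyond combining them correctly. First I would invoke the Edelman--Jamison characterization quoted above (Theorem~3.7 of \cite{edel-jami}): since by hypothesis the connected alignment $\mathcal{C}_G$ is a convex geometry, $G$ must be a connected block graph. This immediately secures the second half of the conclusion, namely that $G$ is a block graph, and it places us in the block-graph setting needed to apply the remaining results.

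Next I would apply the preceding Lemma, which asserts that for a block graph $G$ the connected alignment equals the $C$-convexity induced by the cut-vertex transit function, that is, $\mathcal{C}_G=\mathcal{C}_C$. Because the first step has already guaranteed that $G$ is a block graph, this identification is available without any further hypotheses, so we may substitute $\mathcal{C}_C$ for $\mathcal{C}_G$.

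Finally I would pass from the cut-vertex transit function to the interval function: by Proposition~4 of \cite{Changat:21}, on a block graph the cut-vertex transit function $C$ coincides with the interval function $I$, so that $C(u,v)=I(u,v)$ for all $u,v\in V(G)$, and therefore the induced convexities agree, $\mathcal{C}_C=\mathcal{C}_I$. Composing the two equalities yields $\mathcal{C}_G=\mathcal{C}_C=\mathcal{C}_I$, which together with the block-graph conclusion from the first step completes the proof.

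As for the main obstacle, there is essentially none internal to this corollary, since all the substantive content is carried by Theorem~3.7 of \cite{edel-jami}, the preceding Lemma, and Proposition~4 of \cite{Changat:21}. The only point that warrants a moment of care is to note that the coincidence $C=I$ is a pointwise equality of transit sets, from which equality of the generated convexities $\mathcal{C}_C=\mathcal{C}_I$ follows automatically (a transit function determines its convexity, so equal transit sets give the same $R$-convex sets); once this is observed, the corollary is immediate.
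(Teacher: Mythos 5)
Your proposal is correct and follows exactly the paper's own route: the Edelman--Jamison result (Theorem~3.7 of \cite{edel-jami}) yields that $G$ is a connected block graph, the preceding Lemma gives $\mathcal{C}_G=\mathcal{C}_C$, and Proposition~4 of \cite{Changat:21} gives $C=I$ on block graphs, hence $\mathcal{C}_G=\mathcal{C}_C=\mathcal{C}_I$. Your closing observation that pointwise equality of transit functions forces equality of the induced convexities is the only glue needed, and it is the same (implicit) step the paper relies on.
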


Since the interval function $I$ of a block graph satisfies $(Ch), (J0)$ and $(b1)$, the Corollary~\ref{Connected_align} also follows from Theorem \ref{cg}.

\section{Transit functions identifying convex geometries}\label{identified}

Any collection of subsets of a set $V$ is said to be a \emph{set system} on $V$. A set system of non-empty subsets of $V$ is called a \emph{clustering system} if it contains every singleton subset of $V$ and the base set $V$. These set systems are closely related to transit functions
(called ``Boolean dissimilarities'' in \cite{Barthelemy:08}) that in
addition satisfy the monotone axiom $(m)$.

A \emph{$\mathscr{T}$-system} is a set system on $V$ satisfying the following three axioms:

$(KS)$ For every $x\in V$ is $\{x\}\in\mathscr{C}$.

$(KR)$ For every $C\in\mathscr{C}$ there exist points $p,q\in C$ such that, if $p,q\in C'$, then $C\subseteq C'$ for every $C'\in \mathscr{C}$.

$(KC)$ For any different $p,q\in V$ we have
$\displaystyle \bigcap\{C\in\mathscr{C}|p,q\in C\} \in \mathscr{C}$.

Following \cite[Theorem 2.6]{Changat:19a}, there is a bijection between monotone transit functions $R$ and $\mathscr{T}$-systems on $V$ mediated by  
\begin{equation*}
	\begin{split}
		R_{\mathscr{C}}(x,y) &:= \bigcap\{ C\in\mathscr{C}:x,y\in C\} \\
  	\mathscr{C}_R        &:= \{ R(x,y):x,y\in V\}
	\end{split}
\end{equation*}
We say that a set system $\mathscr{C}$ \emph{is identified} by the transit
function $R$ if $\mathscr{C}=\mathscr{C}_R$ where $R=R_{\mathscr{C}}$. 
A set system is identified by a transit function if and only if it is a $\mathscr{T}$-system. Conversely, a transit function identifies a set system if and only if it is monotone. In this case, $R_{\mathscr{C}}$ is called the \emph{canonical transit function} of the set system $\mathscr{C}$ and $\mathscr{C}_R$ is called the \emph{system of transit sets} of $R$.

A \emph{$\mathscr{T}$-system} is a binary clustering system if it satisfies

$(K1)$ $V \in \mathscr{C}$.

As shown in \cite{Barthelemy:08,Changat:19a}, binary clustering systems are
identified by monotone transit functions satisfying the additional
condition

$(a')$ There exist $u,v\in V$ such that $R(u,v)=V$.

A set system $(V,\mathscr{C})$ is \emph{closed (under non-empty intersection)} if it satisfies

$(K2)$ If $A,B\in\mathscr{C}$ and $A\cap B\ne\emptyset$, then	$A\cap B\in\mathscr{C}$.

A set system $\mathscr{C}$ on a finite non-empty set $V$ satisfies (K1) and (K2) if and only if $\mathscr{C}\cup\emptyset$ is a convexity in the usual sense. 
Convexities satisfying (KS) are therefore the same as \emph{closed clustering systems}. It is shown in \cite{Changat:19a} that (K2) for $\mathscr{C}$ is
equivalent to the following property of the identifying transit function:

$(k)$ For all $u,v,x,y\in V$ with $R(u,v)\cap R(x,y)\ne\emptyset$, there are $p,q\in V$ such that $R(u,v)\cap R(x,y) = R(p,q)$.

For a monotone transit function $R$, $\mathscr{C}_R$ is a convexity if and only if $R$ satisfies $(a')$ and $(k)$.

In the following, we characterize transit functions identifying convex geometries. 
Since the transit sets of a monotone transit function are $R$-convex sets, $\mathscr{C}_R\subseteq \mathcal{C}_R$.
However, Example \ref{ex:cgjob1-congeo} shows that a convex geometry $\mathcal{C}_R$ does not imply that the set of transit sets $\mathscr{C}_R$ also forms a convex geometry, even if the transit function satisfies $(m)$, $(Ch)$, $(J0)$, $(b1)$, $(a')$ and $(k)$.
 
\begin{example}\label{ex:cgjob1-congeo}
Let $V=\{a,b,c,d\}$ and $R$ be symmetric on $V$ and defined by $R(a,d)=V$, $R(b,d)=\{b,c,d\}$ and 
$R(u,v)=\{u,v\}$ for all the other pairs $u,v\in V$. One can easily check that $R$ is a monotone transit function satisfying $(Ch)$, $(J0)$, $(b1)$, $(a')$ and $(k)$. However, $\mathscr{C}_R$ is not a convex geometry, since the convex set $K=\{a,b\}$ and the points $c,d\notin K$ violate the anti-exchange property. For this notice that $\left\langle K \cup c\right\rangle=V$ because $\{a,b,c\}$ is not a member of $\mathscr{C}_R$.
\end{example}

\begin{corollary}
Let $R$ be a monotone transit function satisfying $(a')$ and $(k)$. Suppose $\mathscr{C}_R$ is a convex geometry, then $R$ satisfies $(b1)$ and $(J0)$.
\end{corollary}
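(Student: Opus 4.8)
The plan is to mirror the proof of Theorem~\ref{Cg1}, the only real change being that all convex hulls must now be computed in the convexity $\mathscr{C}_R$ of transit sets rather than in the full $R$-convexity $\mathcal{C}_R$. The hypotheses $(a')$ and $(k)$ together with monotonicity guarantee (by the results quoted before Example~\ref{ex:cgjob1-congeo}) that $\mathscr{C}_R\cup\{\emptyset\}$ is indeed a convexity, so that convex hulls exist and Theorem~\ref{thm:edeljami} applies; since $\mathscr{C}_R$ is assumed to be a convex geometry, the anti-exchange axiom holds in $\mathscr{C}_R$. The single fact that powers the whole argument is that, because $R$ is monotone, every transit set $R(a,b)$ is $R$-convex, and hence every member of $\mathscr{C}_R$ --- in particular every convex hull $\langle\,\cdot\,\rangle_{\mathscr{C}_R}$ --- is $R$-convex. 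Consequently, whenever such a hull contains two points $a,b$, it must also contain the whole transit set $R(a,b)$. I also record that $\{u\}=R(u,u)\in\mathscr{C}_R$ and $R(u,v)\in\mathscr{C}_R$ for all $u,v$, so both sets used below are genuinely convex in $\mathscr{C}_R$.

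For $(b1)$ I argue by contradiction: suppose $x\in R(u,v)$, $x\neq v$ and $v\in R(u,x)$. A short check rules out $u=x$ and $u=v$, so with $K=\{u\}$ the two points $x,v$ lie outside $K$ and are distinct. Since $\langle K\cup v\rangle_{\mathscr{C}_R}$ is $R$-convex and contains $u,v$, it contains $R(u,v)\ni x$, whence $x\in\langle K\cup v\rangle$; symmetrically $\langle K\cup x\rangle_{\mathscr{C}_R}$ contains $R(u,x)\ni v$, so $v\in\langle K\cup x\rangle$. This violates the anti-exchange axiom for $\mathscr{C}_R$, establishing $(b1)$.

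For $(J0)$ suppose $x\in R(u,y)$, $y\in R(x,v)$ and $x\notin R(u,v)$ for distinct $u,v,x,y$. Monotonicity first excludes $y\in R(u,v)$: otherwise $u,y\in R(u,v)$ would force $R(u,y)\subseteq R(u,v)$ and hence $x\in R(u,v)$. Thus both $x$ and $y$ lie outside the convex set $K=R(u,v)\in\mathscr{C}_R$, and they are distinct. As before, $\langle K\cup y\rangle_{\mathscr{C}_R}$ is $R$-convex and contains $u\in K$ and $y$, hence contains $R(u,y)\ni x$; likewise $\langle K\cup x\rangle_{\mathscr{C}_R}$ contains $v\in K$ and $x$, hence contains $R(x,v)\ni y$. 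So $x\in\langle K\cup y\rangle$ and $y\in\langle K\cup x\rangle$, again contradicting anti-exchange and yielding $(J0)$.

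The only place demanding care --- and the main obstacle --- is the interplay between the two convexities: the statement concerns $\mathscr{C}_R$, which (as Example~\ref{ex:cgjob1-congeo} shows) can properly differ from $\mathcal{C}_R$, so one cannot simply quote Theorem~\ref{Cg1}. The argument survives this transfer precisely because convex hulls taken in $\mathscr{C}_R$ remain $R$-convex, which is exactly where monotonicity is indispensable; the conditions $(a')$ and $(k)$ enter only to ensure that $\mathscr{C}_R$ is a convexity, so that ``convex hull'' and ``convex geometry'' are meaningful notions.
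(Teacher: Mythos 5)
Your proposal is correct and follows essentially the same route as the paper, whose proof is simply ``the same as Theorem~\ref{Cg1} with $K=R(x,y)$'': you rerun the two contradiction arguments of Theorem~\ref{Cg1} with $K=\{u\}=R(u,u)$ and $K=R(u,v)$, both transit sets and hence in $\mathscr{C}_R$. The only addition is that you spell out the detail the paper leaves implicit --- that since $\mathscr{C}_R$ is a convexity (by $(a')$ and $(k)$) whose members are $R$-convex (by monotonicity), hulls taken in $\mathscr{C}_R$ absorb transit sets of their points, so the anti-exchange violations go through verbatim.
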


\begin{proof}
The proof is the same as of Theorem \ref{Cg1} with $K=R(x,y)$ for $x,y\in V$. 
\end{proof}

Note that $(m)$, $(Ch)$, $(J0)$ and $(b1)$ need not imply $(a')$ or $(k)$.
\begin{example}
Let $R$ be symmetric on $V=\{a,b,c,d\}$, defined by $R(a,c)=\{a,b,c\}$, $R(a,d)=\{a,b,d\}$, $R(c,d)=\{c,b,d\}$ and $R(u,v)=\{u,v\}$ for all the other pairs $u,v\in V$. It is easy to check that $(m)$, $(k)$, $(Ch)$, $(J0)$ and $(b1)$ hold for $R$ but $(a')$ is violated. Therefore, $\mathscr{C}_R$ is not a convex geometry since $V\notin \mathscr{C}_R$.
\end{example}

\begin{example}
Let $R$ be symmetric on $V=\{a,b,c,d,e\}$, defined by $R(a,e)=\{a,b,c,e\}$, $R(b,d)=\{b,a,d\}$, $R(b,e)=\{b,c,e\}$, $R(c,d)=\{c,a,b,d\}$, $R(d,e)=V$ and $R(u,v)=\{u,v\}$ for all the other pairs $u,v\in V$. Now $R$ satisfies $(m)$, $(a')$, $(Ch)$, $(J0)$ and $(b1)$ and violates $(k)$, since $R(a,e)\cap R(c,d)=\{a,b,c\}$ is not a transit set.  Therefore, $\mathscr{C}_R$ is not a convex geometry since it is not even a convexity.
\end{example}

Theorem \ref{thm:edeljami} states that, for a convexity, the 
 anti-exchange axiom is equivalent to the property that for every convex 
 set $K$ there is a point $p\in V$ such that $K\cup \{p\}$ is convex. 
 The latter can be translated in terms of transit functions.

$(cg)$ For every $x,y\in V$, there exists $z\in R(x,y)$ and $w\notin R(x,y)$ such that $R(x,y)\cup \{w\}=R(w,z)$.

\begin{theorem}\label{thm:cg'}
Let $R$ be a monotone transit function satisfying $(a')$ and $(k)$. Then, $\mathscr{C}_R$ is a convex geometry if and only if $(cg)$ holds for $R$.  
\end{theorem}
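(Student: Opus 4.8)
The plan is to read $(cg)$ as the transit-function translation of condition $(iii)$ of Theorem~\ref{thm:edeljami} and then invoke that theorem directly. The enabling structural observation is that, under the standing hypotheses ($R$ monotone together with $(a')$ and $(k)$), the family $\mathscr{C}_R$ is a convexity whose convex sets are \emph{precisely} the transit sets $R(x,y)$, $x,y\in V$. Indeed $\mathscr{C}_R=\{R(x,y):x,y\in V\}$ by definition, $V\in\mathscr{C}_R$ by $(a')$, each singleton $\{x\}=R(x,x)$ lies in $\mathscr{C}_R$ by $(t3)$, and closure under (nonempty) intersection is exactly $(k)$. Consequently, for a transit set $K=R(x,y)$ and a point $p\notin K$, the set $K\cup\{p\}$ is convex in $\mathscr{C}_R$ if and only if $K\cup\{p\}=R(w,z)$ for some $w,z\in V$; this is the bridge I will use in both directions.

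For the implication $(cg)\Rightarrow\mathscr{C}_R$ a convex geometry, I would take an arbitrary proper convex set $K\ne V$, write it as $K=R(x,y)$, and apply $(cg)$ to the pair $x,y$. This yields $w\notin K$ and $z\in K$ with $K\cup\{w\}=R(w,z)\in\mathscr{C}_R$, so $K\cup\{w\}$ is convex. Since this is exactly condition $(iii)$ of Theorem~\ref{thm:edeljami} (the case $K=V$ being vacuous, as no point can be adjoined to $V$), it follows that $\mathscr{C}_R$ is a convex geometry.

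For the converse I would start from any pair $x,y$ with $R(x,y)\ne V$, set $K=R(x,y)$, and use Theorem~\ref{thm:edeljami}$(iii)$ to obtain $p\in V\setminus K$ with $K\cup\{p\}$ convex, hence $K\cup\{p\}=R(w,z)$ for some $w,z$. The crux is normalizing the generating pair into the shape demanded by $(cg)$: one endpoint should be the new point $p\notin K$ and the other should lie in $K$. Monotonicity supplies this. If both $w,z\in K$, then $R(w,z)\subseteq R(x,y)=K$ because $K$ is $R$-convex, contradicting $p\in R(w,z)\setminus K$; and $w=z=p$ would force $R(p,p)=\{p\}=K\cup\{p\}$, i.e.\ $K=\emptyset$, which is impossible as $K\supseteq\{x,y\}$. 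Hence exactly one endpoint is $p$, say $w=p$ by $(t2)$, whence $z\in R(w,z)=K\cup\{p\}$ with $z\ne p$ gives $z\in K$. Thus $R(x,y)\cup\{p\}=R(p,z)$ with $p\notin R(x,y)$ and $z\in R(x,y)$, which is precisely $(cg)$.

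I expect the main obstacle to be this normalization step in the forward direction, since the extension property of Theorem~\ref{thm:edeljami}$(iii)$ only hands one an \emph{unstructured} generating pair $(w,z)$, and it is the monotonicity of $R$ that forces one endpoint outside and one inside $K$. The only other delicate point is bookkeeping around the top element: because nothing can be added to $V$, both $(cg)$ and condition $(iii)$ must be read over proper transit sets $R(x,y)\ne V$, and I would state this restriction explicitly so that the stated equivalence is exact.
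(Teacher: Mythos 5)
Your proof is correct and follows essentially the same route as the paper's: identify the members of $\mathscr{C}_R$ with the transit sets (using monotonicity, $(a')$ and $(k)$), invoke condition $(iii)$ of Theorem~\ref{thm:edeljami}, and use monotonicity to force the generating pair of the extended set to have one endpoint outside and one inside $K$. Your write-up is in fact slightly more careful than the paper's, which glosses over the degenerate cases you address explicitly (the top set $V$, the possibility $w=z=p$, and the verification that the second endpoint lies in $R(x,y)$).
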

\begin{proof}
Already, $\mathscr{C}_R$ is a convexity. Assume that $\mathscr{C}_R$ is convex geometry. Then, for every $K\in \mathscr{C}_R$, there exists $p\in V$ such that $K\cup \{p\}\in \mathscr{C}_R$. Since the convex sets here are exactly transit sets, we can say that, for every $x,y\in V$, there exists $w\notin R(x,y)$ such that $R(x,y)\cup \{w\}=R(p,q)$ for some $p,q\in V$. Clearly, $p,q\in R(x,y)\cup \{w\}$. If both $p,q\in R(x,y)$, then by monotonicity, we arrive at a contradiction. Therefore, $p$ or $q$ must be $w$. Hence, we get $(cg)$.

Conversely, assume that $R$ holds $(cg)$. This means that for every transit set $K$, there is a point $w\in V$ such that $K\cup \{w\}$ is also a transit set. Since the transit sets are precisely the convex sets, we see that $\mathscr{C}_R$ is a convex geometry. 
\end{proof}

\begin{lemma}\label{lem:cg'->a'}
Let $R$ be a transit function satisfying $(cg)$. Then $R$ satisfies $(a')$.
\end{lemma}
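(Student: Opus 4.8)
The plan is to argue by contradiction using the finiteness of $V$ together with an extremal (maximum-cardinality) argument on the transit sets. The crucial observation is that $(cg)$ turns any \emph{proper} transit set into a strictly larger one: if $R(x,y)\neq V$, then $(cg)$ produces $z\in R(x,y)$ and $w\notin R(x,y)$ with $R(w,z)=R(x,y)\cup\{w\}$, so that $R(w,z)$ is again a transit set and $|R(w,z)|=|R(x,y)|+1$. Since $V$ is finite, this strict growth cannot persist, and that is what will force a transit set to be all of $V$.

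Concretely, I would assume for contradiction that $(a')$ fails, i.e. $R(x,y)\neq V$ for every pair $x,y\in V$. The family $\{R(x,y):x,y\in V\}$ is finite because $V$ is finite, so I may choose a transit set $M=R(x_0,y_0)$ of maximum cardinality. By the standing assumption $M\subsetneq V$, hence a point outside $M$ exists and $(cg)$ applies to the pair $(x_0,y_0)$, yielding $z\in M$ and $w\notin M$ with $M\cup\{w\}=R(w,z)$. Then $R(w,z)$ is a transit set with $|R(w,z)|=|M|+1>|M|$, contradicting the choice of $M$. Therefore the assumption is untenable and some transit set equals $V$, which is exactly $(a')$.

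I expect the only delicate point to be interpretive rather than computational: the axiom $(cg)$ asserts a witness $w\notin R(x,y)$, so it can only be meaningfully invoked on transit sets that are proper subsets of $V$, precisely as condition $(iii)$ of Theorem~\ref{thm:edeljami} tacitly concerns convex sets distinct from $V$. The argument above is arranged to respect this, since $(cg)$ is used only for the maximum transit set $M$ under the hypothesis $M\neq V$, where a point outside is guaranteed to exist; no application of $(cg)$ to a pair realizing $V$ is ever required. Once this is pinned down, no genuine obstacle remains.
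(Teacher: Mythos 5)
Your proof is correct and rests on the same key observation as the paper's: $(cg)$ applied to any proper transit set $R(x,y)\subsetneq V$ produces a strictly larger transit set, so finiteness of $V$ forces some transit set to equal $V$. The paper phrases the finiteness step iteratively (repeatedly applying $(cg)$ to build a properly nested chain that must terminate at $V$), whereas you take a maximum-cardinality transit set and derive a contradiction; these are interchangeable packagings of the same argument, and your explicit remark that $(cg)$ is only invoked on proper transit sets matches what the paper's proof does tacitly.
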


\begin{proof}
Consider a transit set $R(x,y)$ for some $x,y\in V$. If $R(x,y)=V$, we have done. If not, applying $(cg)$, we see that there exists $x'\in R(x,y)$ and $y'\notin R(x,y)$ such that $R(x,y)\cup \{y'\}=R(x',y')$. By construction, $R(x,y)\subset R(x',y')$. If $R(x',y')=V$, we are done. Otherwise, there exists $x''\in R(x',y')$ and $y''\notin R(x',y')$ such that $R(x',y')\cup \{y''\}=R(x'',y'')$. Also,  $R(x',y')\subset R(x'',y'')$. If we continue this process, we get transit sets that are nested properly. Since $V$ is finite, this must end at some point. That is, we must get $x^k,y^k$ such that $R(x^k,y^k)=V$ after a finite number of steps. 
\end{proof}

From the above Theorem \ref{thm:cg'} and Lemma \ref{lem:cg'->a'}, we can conclude the following.

\begin{corollary}
Let $R$ be a monotone transit function satisfying $(k)$. Then $\mathscr{C}_R$ is a convex geometry if and only if $R$ holds $(cg)$.  
\end{corollary}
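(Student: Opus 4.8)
The plan is to obtain this corollary by combining Theorem~\ref{thm:cg'} with Lemma~\ref{lem:cg'->a'}. The essential observation is that, under the standing hypotheses that $R$ is monotone and satisfies $(k)$, the extra assumption $(a')$ appearing in Theorem~\ref{thm:cg'} is redundant, because it can be recovered on each side of the desired equivalence. Thus the corollary is really the assertion that $(a')$ is not an independent requirement once either $(cg)$ or the convex geometry property is available, and the proof should be almost entirely a matter of bookkeeping with the two cited results.

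For the sufficiency direction, I would begin by assuming $(cg)$ and immediately invoke Lemma~\ref{lem:cg'->a'} to deduce that $R$ satisfies $(a')$. At this point $R$ is a monotone transit function satisfying both $(a')$ and $(k)$, so Theorem~\ref{thm:cg'} is applicable; since $(cg)$ holds by assumption, that theorem yields that $\mathscr{C}_R$ is a convex geometry.

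For the necessity direction, I would use the fact recalled just before Example~\ref{ex:cgjob1-congeo}, namely that for a monotone transit function $R$ the family $\mathscr{C}_R$ is a convexity if and only if $R$ satisfies $(a')$ and $(k)$. Since a convex geometry is by definition a convexity, assuming $\mathscr{C}_R$ to be a convex geometry already forces $R$ to satisfy $(a')$. Together with the standing hypothesis $(k)$, this places us once more in the setting of Theorem~\ref{thm:cg'}, which then returns $(cg)$.

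The argument involves no computation, so I do not anticipate a genuine obstacle; the only point requiring care is verifying that $(a')$ truly comes for free in both directions, from Lemma~\ref{lem:cg'->a'} in the one case and from the convexity axioms in the other. This is precisely what decouples the corollary from the stronger hypotheses of Theorem~\ref{thm:cg'} and makes the two-line reduction legitimate.
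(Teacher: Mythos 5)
Your proposal is correct and is essentially the reconstruction the paper intends: it obtains the corollary by combining Theorem~\ref{thm:cg'} with Lemma~\ref{lem:cg'->a'}, recovering $(a')$ from $(cg)$ via the lemma in one direction, and from the fact that a convex geometry is in particular a convexity (which, for monotone $R$, forces $(a')$ and $(k)$) in the other. No gaps; this matches the paper's one-line derivation.
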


\section {Concluding Remarks}

In this paper, we obtained two partial characterizations 
of convex geometries arising from interval convexities using the axioms on the corresponding associated transit functions by means of Theorems~\ref{cg} and~\ref{Peano-ch}. Moreover, we have analysed the instances of such convex geometries of Theorem~\ref{cg}, but not of Theorem~\ref{Peano-ch}. 
Analysing special instances of Theorem~\ref{Peano-ch} in convex geometries of different settings would also be an interesting problem, which may be pursued in future. 

More challenging problem is to provide a characterization of transit functions using axioms whose associated interval convexities form convex geometries. Towards this, it would be interesting to find an axiom which is stronger than the monotone axiom (m) and weaker than the Peano axiom (P), but we do not know whether such an axiom exists. \\

In Subsection~\ref{C-convexity}, we have discussed the case of the cut-vertex transit function of a graph $G$ and found that the corresponding convexity forms a convex geometry and coincides with the convexity of connected sets (connected alignment) in $G$ and this happens if and only if $G$ is a block graph.  It may be noted that the cut-vertex transit function is also defined on hypergraphs, but we observe that the convexity generated by the cut-vertex transit function may not yield a convex geometry in the case of a hypergraph. We may briefly discuss about the cut-vertex transit function of a hypergraph. For this we need a few definitions on hypergraphs.  

A \emph{hypergraph} $H$ consists of a non-empty set $V$ of vertices and a collection of non-empty subsets $E$ of $V$ ($E\subseteq 2^V$) known as hyperedges. Let $u,v\in V$ be two vertices of $H$. A $u,v$-\emph{path} in $H$ is an alternating sequence $(u=x_1,e_1,x_2,e_2,\dots ,x_n,e_n,x_{n+1}=v)$, of distinct vertices $x_i$ and distinct edges $e_i$ such that $x_i,x_{i+1}\in e_i$ for every $i\in \{1,\dots,n\}$. A hypergraph is \emph{connected} if there exists a $u,v$-path for every pair $u,v\in V$. 

Let $v\in V$ be a vertex of a connected hypergraph $H$. The operation of \emph{strong vertex deletion} of $v$ results in a hypergraph $H-v$ where $v$ and all the edges containing $v$ where removed from $H$. A \emph{strong cut-vertex} is a vertex $x$ such that $H-x$ is not connected \cite{Dewar}. A vertex $x$ is a strong cut-vertex in $H$ if and only if there exists two distinct vertices $u\neq x$ and $v\neq x$ such that every $u,v$-path contains an edge containing $x$. In this case, we say that $x$ \emph{separates} $u$ and $v$ in $H$. 

The cut-vertex transit function $C$ of a hypergraph $H$ with vertex set $V$ is defined as the function $C:V\times V\to 2^V$ given by 
\begin{equation}\label{c1}
C(u,v)=\{x\in V : x \;\text{lies on every $u,v$-path}\}.
\end{equation}
introduced by Duchet \cite{Duchet:84}.  Equivalently, $C(u,v)$ is the set of all strong cut-vertices separating $u$ and $v$ together with $u$ and $v$. By convention, $C(u,u)=\{u\}$ for every $u\in V$. Cut-vertex transit function of graphs and hypergraphs are studied in \cite{Changat:21}.

One can easily verify that the cut-vertex transit function of a graph satisfies $(b1)$, $(Ch)$ and $(J0)$. Therefore, the $C$-convexity generated by the cut-vertex transit function $C$ of a graph is a convex geometry. But this may not be true for hypergraphs in general. 
The hypergraph $H$ in Figure~\ref{C-cg}(i) makes an example of it where  $K=\{c,d\}$ is a convex set, but no $x\in V$ exists such that $K\cup \{x\}$ is convex. Therefore cut-vertex transit function does not yield a convex geometry for $H$. Further, in Figure \ref{C-cg}(ii) and (iii) there are two examples, where the convexity yields by the cut-vertex transit function is a convex geometry for the hypergraph in question. It is easy to see that hypergraph in Figure \ref{C-cg}(ii) contains one strong cut-vertex, while hypergraph in Figure \ref{C-cg}(i) contains two strong cut-vertices. This is the line where we separate the following result and an open problem. 

\begin{figure}[ht]
\begin{center}
\includegraphics{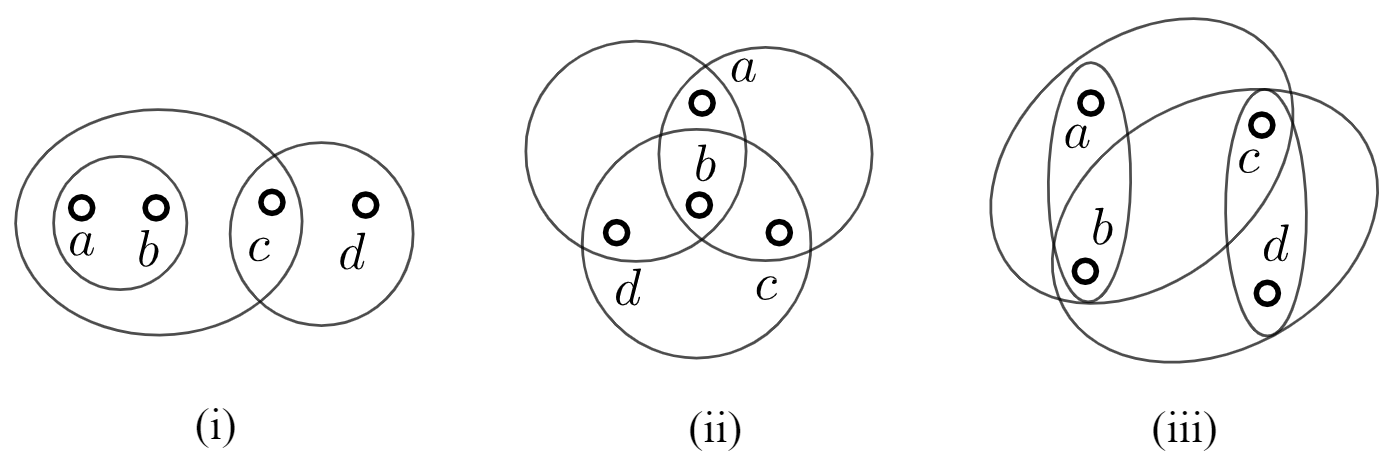}
\end{center}
\caption{Hyperhraphs with with one $(ii)$, with two $(iii)$ and with three $(i)$ strong cut-vertices.}\label{C-cg}
\end{figure}

\begin{proposition}
If a hypergraph $H$ contains at most one strong cut-vertex, then the cut-vertex transit function yields a convex geometry on $H$.
\end{proposition}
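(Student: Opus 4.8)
The plan is to verify condition (iii) of Theorem~\ref{thm:edeljami}: it suffices to produce, for every $C$-convex set $K\subsetneq V$, a point $p\in V\setminus K$ for which $K\cup\{p\}$ is again $C$-convex. Everything will follow once I obtain an explicit description of the transit sets $C(u,v)$, and the hypothesis that $H$ has at most one strong cut-vertex is exactly what makes such a description clean. First I would dispose of the degenerate case: if $H$ has no strong cut-vertex, then by the equivalent form of $(\ref{c1})$ we get $C(u,v)=\{u,v\}$ for all distinct $u,v$, so every subset of $V$ is $C$-convex, $\mathcal{C}_C=2^V$, and (iii) is immediate.

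So assume $H$ has exactly one strong cut-vertex $c$, and let $A_1,\dots,A_k$ (with $k\ge 2$) be the vertex sets of the connected components of the strong vertex deletion $H-c$. The central preliminary step is the observation that, for distinct $u,v\in V\setminus\{c\}$, the vertex $c$ separates $u$ and $v$ (equivalently $c\in C(u,v)$) if and only if $u$ and $v$ lie in different components $A_i$. This is because a $u,v$-path avoiding every edge through $c$ is precisely a $u,v$-path in $H-c$. Since $c$ is the \emph{only} strong cut-vertex, the equivalent form of $C$ gives $C(u,v)\subseteq\{u,v,c\}$ for all $u,v$; more precisely $C(u,v)=\{u,v\}$ when $u,v$ lie in the same component and $C(u,v)=\{u,v,c\}$ otherwise.

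From this I would read off the structure of the convex sets. Any set $K$ with $c\in K$ is automatically $C$-convex, because $C(u,v)\subseteq\{u,v,c\}\subseteq K$ for every $u,v\in K$. On the other hand, a set $K$ with $c\notin K$ is $C$-convex if and only if $K$ is contained in a single component $A_i$: if two elements of $K$ lay in distinct components then their transit set would contain $c\notin K$, while if $K\subseteq A_i$ then every transit set of its elements equals $\{u,v\}\subseteq K$. With this dichotomy the verification of (iii) is a short case check. If $c\in K$, any $p\in V\setminus K$ works, since $K\cup\{p\}$ still contains $c$. If $c\notin K$ and $K\subseteq A_i$, then either $K\subsetneq A_i$, in which case any $p\in A_i\setminus K$ keeps $K\cup\{p\}$ inside $A_i$ and hence convex, or $K=A_i$, in which case taking $p=c$ makes $K\cup\{c\}$ convex because it contains $c$. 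In every case the required extension point exists, so $\mathcal{C}_C$ is a convex geometry.

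I do not expect a genuine obstacle here; the only step demanding care is the description of $C(u,v)$, where one must correctly translate ``$c$ separates $u$ and $v$'' into the statement that $u,v$ lie in different components of $H-c$. The uniqueness of the strong cut-vertex is essential to this argument: it is precisely what forces $C(u,v)\subseteq\{u,v,c\}$ and thereby reduces all convex sets to the simple ``contains $c$ / lies in one component'' alternative. With two or more strong cut-vertices this dichotomy breaks down, which is why the statement is restricted to at most one such vertex.
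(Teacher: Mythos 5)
Your proof is correct, but it follows a genuinely different route from the paper's. The paper stays inside its axiomatic framework: it checks that $C$ satisfies $(b1)$, $(J0)$ and $(Ch)$ --- each a short case analysis using only the fact that every element of $C(u,v)\setminus\{u,v\}$ is a strong cut-vertex, of which there is at most one --- and then applies Theorem~\ref{cg}. You bypass that machinery entirely: you describe the transit sets explicitly ($C(u,v)\subseteq\{u,v,c\}$, with $c$ present exactly when $u$ and $v$ lie in different components of $H-c$), derive from this the full structure of the convexity (sets containing $c$, together with subsets of a single component of $H-c$), and verify condition (iii) of Theorem~\ref{thm:edeljami} directly. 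Your route is self-contained, needing only the classical Edelman--Jamison theorem rather than the paper's Theorem~\ref{cg}, and it yields an explicit picture of the convex geometry, which the paper's proof does not provide; the paper's route is shorter and doubles as an illustration of its main theorem in action. Two observations. First, your argument compresses further: from $C(u,v)\subseteq\{u,v,c\}$ alone, every set containing $c$ is convex, so in condition (iii) one may take $p=c$ whenever $c\notin K$ and any $p\in V\setminus K$ whenever $c\in K$; the component dichotomy is therefore not needed for the verification itself, only for the structural description of $\mathcal{C}_C$. Second, that dichotomy rests on the paper's ``equivalent'' separation form of $C$, which you correctly flag: under the literal reading of~(\ref{c1}), a $u,v$-path may traverse an edge containing $c$ without visiting $c$ as a vertex, so ``$u,v$ in different components'' need not force $c\in C(u,v)$. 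Your compressed argument via $C(u,v)\subseteq\{u,v,c\}$ is immune to this ambiguity, since that containment holds under either reading of the definition.
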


\begin{proof} 
Clearly $(J0)$ holds by nothing, since there is at most one strong cut-vertex in $H$. For $(b1)$ $x\in C(u,v)$ and $v\neq x$ gives two possibilities: either $u=x$ or $x$ is a strong cut-vertex. In the first case, clearly $v\notin C(u,x)=\{u\}$ and in the second case again $v\notin C(u,x)$, because $v\neq x$ is not a strong cut-vertex as there is only one strong cut-vertex in $H$. We are left with $(Ch)$. If $y=w$, then $y\in C(u,w)$ and $(Ch)$ holds. If $y=x$, then $y\in C(u,v)$ and $(Ch)$ holds again. Otherwise, $y$ is a strong cut-vertex different than $x$. Since there is only one strong cut-vertex, we have $x=u$ or $x=v$ and $y\in C(u,w)$ or $y\in C(v,w)$, respectively. So, $(Ch)$ is fulfilled and the result follows from Theorem \ref{cg}.  
\end{proof}

\begin{problem}
Describe for which hypergraphs the cut-vertex transit function yields a convex geometry.
\end{problem}

\end{document}